\newtheorem{lem}{Lemma}[section]
\newtheorem{thm}{Theorem}[section]
\newtheorem{cor}{Corollary}[section]
\newtheorem{rmk}{Remark}[section] 
\numberwithin{equation}{section}
\def\<{\langle}
\def\>{\rangle}
\title [CLT and MDP for the mean fields of Hawkes processes]
{Fluctuations and moderate deviations for the mean fields of Hawkes processes}
\author{Fuqing GAO}
\address{School of Mathematics and Statistics, Wuhan University, Wuhan 430072, China}
\email{fqgao@whu.edu.cn}
 \thanks{Supported by NSFC Grants 11971361 and 11731012.}
\author{Yunshi Gao}
\address{School of Mathematics and Statistics, Wuhan University, Wuhan
430072, China}  \email{yunshig@whu.edu.cn}
\author{Lingjiong Zhu}
\address{Department of Mathematics, Florida State University, 1017 Academic Way, Tallahassee, FL-32306, United States of America}
\email{zhu@math.fsu.edu}
\thanks{Supported by  NSF Grants DMS-2053454 and DMS-2208303.}
\date{July 29, 2023.}
\begin{document}

\subjclass[2010]{60G55, 82C22, 60F10}

\keywords{Hawkes process, mean field, large deviation, fluctuation, moderate deviation.}

\begin{abstract} The Hawkes process is a counting process that has self- and mutually-exciting features
with many applications in various fields. In recent years, there have been many interests in 
the mean-field results of the Hawkes process and its extensions.
It is known that the mean-field limit of  a multivariate nonlinear Hawkes process is a time-inhomogeneous Poisson process.
In this paper, we study the fluctuations for the  mean fields and  the large deviations associated with the fluctuations, i.e., the moderate deviations.
\end{abstract}

\maketitle

\section{Introduction}
 
The Hawkes process is a self- and mutually-exciting counting process that has clustering effect, 
with wide applications in criminology, econometrics, finance, genome analysis, insurance, machine learning, marketing,
neuroscience, operations management, queueing theory,  sociology,  seismology and many other fields.
The (multivariate linear) Hawkes process  was first proposed by Hawkes \cite{Hawkes1971Biometrika}  to model earthquakes and their aftershocks.
The extension to the nonlinear intensity function was first introduced by Br\'{e}maud and Massouli\'{e} \cite{BremaudMassoulie1996AOP}, 
which is known as the nonlinear Hawkes process.
A multivariate  Hawkes process is  a counting process with the mutually-exciting property; it is one of the most popular  models to describe the interactions across the dimensions and also the dependence on the past events 
 (see e.g. Hawkes \cite{Hawkes1971Biometrika}, Hawkes and Oakes \cite{HawkesOakes1974JAP}, Br\'{e}maud and Massouli\'{e} \cite{BremaudMassoulie1996AOP},  Daley and Vere-Jones \cite{DaleyVere-Jones2003Books}, Zhu \cite{ZhuThesis} and the references therein).
The $N$-dimensional (multivariate nonlinear) Hawkes process is defined as $\left(Z_{t}^{1},\ldots,Z_{t}^{N}\right)$,
where $Z_{t}^{i}$, $1\leq i\leq N$, are simple point processes without common points,
with $Z_{t}^{i}$ admitting an $\mathcal{F}_{t}$-intensity:
\begin{equation}\label{dynamics-0}
\lambda_{t}^{i}:=\phi_{i}\left(\sum_{j=1}^{N}\int_{0}^{t-}h_{ij}(t-s)dZ_{s}^{j}\right)
\end{equation}
where $\phi_{i}(\cdot):\mathbb{R}^{+}\rightarrow\mathbb{R}^{+}$ is locally integrable, left continuous,
$h_{ij}(\cdot):\mathbb{R}^{+}\rightarrow\mathbb{R}^{+}$ and
we always assume that $\| h_{ij}\|_{L^{1}}=\int_{0}^{\infty}h_{ij}(t)dt<\infty$.
In the literature, $h_{ij}(\cdot)$ and $\phi_{i}(\cdot)$ are usually referred to
as exciting function (or sometimes kernel function) and rate function respectively. If  $\phi_{i}$ are all linear,  i.e.,  $ \phi_i(x)=\mu_i+x$ with $\mu_i\geq 0$, we obtain linear Hawkes processes where $\mu_i$ can be interpreted as a baseline Poisson intensity.  
By using the Poisson embeddings, see e.g. \cite{BremaudMassoulie1996AOP,DelattreFournierHoffmann2016AAP},
we can express the Hawkes process $\left(Z_{t}^{1},\ldots,Z_{t}^{N}\right)$
as the solution of a Poisson driven stochastic differential equation (SDE):
\begin{equation}\label{dynamics-sde}
Z^{i}_t=\int_0^t \int_0^\infty I_{\left\{z \leq \phi_{i}\left(\sum_{j=1}^N\int_0^{s-}h_{ij}(s-u)dZ_u^{j}\right)\right\}}
\pi^i(ds\,dz),
\qquad
1\leq i\leq N,
\end{equation}
where $\{\pi^i(ds\,dz), i\geq 1\}$ are a sequence of  i.i.d.  Poisson
random measures with common intensity measure $dsdz$ on
$[0,\infty) \times [0,\infty)$.

The large time (i.e. $t\rightarrow\infty$) limit theorems have been well studied for Hawkes processes in the literature.
In terms of linear Hawkes processes, Bacry et al. \cite{Bacry-etal-2013SPA} obtained functional law of large numbers and functional central limit theorem for the multivariate linear Hawkes process,
Bordenave and Torrisi \cite{BordenaveTorrisi2007SM} established large deviation principle for the univariate linear Hawkes process,
Gao and Zhu \cite{GaoZhu2021Bernoulli} obtained finer large deviations results utilizing the mod-$\phi$ convergence theory,
and Jaisson and Rosenbaum \cite{JaissonRosenbaum2015AAP,JaissonRosenbaum2016AAP} obtained limit theorems for the nearly unstable linear Hawkes processes.
In terms of nonlinear Hawkes processes, Zhu \cite{Zhu2013JAP} obtained a functional central limit theorem for the univariate nonlinear Hawkes process,
and the large deviations are derived in \cite{Zhu2014AIHP,Zhu2015AAP}.
There have also been studies for limit theorems taking asymptotics other than the large-time limit.
When the initial intensity is large, limit theorems have been obtained for linear Markovian Hawkes process (see e.g. \cite{GaoZhu2018SPA,GaoZhu2018Bernoulli})
and  when the baseline intensity is large (see e.g. \cite{GaoZhu2018QS,LiPang2022SPA}). Torrisi \cite{Torrisi2016AAP,Torrisi2017AIHP} studied the quantitative Gaussian and Poisson approximations of the simple point processes with stochastic intensity, which includes the univariate nonlinear Hawkes process as a special case.

The first work on the mean-field limit for high-dimensional Hawkes processes \eqref{dynamics-sde} appeared in Delattre et al. \cite{DelattreFournierHoffmann2016AAP}. 
They considered the (multivariate nonlinear) Hawkes process
$\left(Z^{N,1}_t,\cdots,Z^{N,N}_t\right)_{t\geq 0}$, defined by
\begin{equation} \label{N-dim-Hawkes-process-eq}
Z_t^{N,i}=\int_0^t \int_0^\infty I_{\left\{z \leq \phi \left(N^{-1}\sum_{j=1}^N\int_0^{s-}h(s-u)dZ_u^{N,j}\right)\right\}}
\pi^i(ds\,dz),\qquad i=1,\cdots,N,
\end{equation}
i.e.,  $h_{ij}=\frac{1}{N}h$
and $\phi_{i}=\phi$ in \eqref{dynamics-sde},  where   $\phi:\mathbb R \mapsto [0,\infty)$ is Lipschitz
and  $h:[0,\infty)\mapsto \mathbb R$ is a locally square integrable function.  The \textit{mean field} of the Hawkes processes is defined by
\begin{equation}\label{empirical-measure-def-eq}
L^N(t,dx):=\frac{1}{N}\sum_{i=1}^N \delta_{Z_t^{N,i}}(dx),\qquad t\geq 0,
\end{equation}
which is a $M_1(\mathbb N)$-valued stochastic process, where $M_1(\mathbb N)$  denotes  the space of probability measures on $\mathbb N:=\{0,1,2,\ldots\}$. 
Delattre et al. \cite{DelattreFournierHoffmann2016AAP} showed that
$L^N$ converges to its mean-field limit, i.e.
\begin{equation}\label{empirical-measure-MF-eq}
L^N\to \mathcal L \mbox { weakly in } D\left([0,\infty),M_1(\mathbb N)\right),
\end{equation}
where $\mathcal L_t(dx)$  is the distribution of $\widetilde Z_t$, which is the mean-field limit process that is defined by
\begin{equation}\label{Hawkes-mean-eq}
\widetilde Z_t = \int_0^t\int_0^\infty I_{ \left\{z \leq \phi\left(\int_0^{s}
h(s-u)d \mathbb E(\widetilde Z_u)\right)\right\}}\pi(ds\,dz),\qquad t\geq 0,
\end{equation}
where $\pi(ds \, dz)$ is a Poisson random measure on $[0,\infty)\times [0,\infty)$ with intensity measure $ds dz$.
Note that as observed in \cite{DelattreFournierHoffmann2016AAP}, $\widetilde{Z}_{t}$ given in \eqref{Hawkes-mean-eq}
is an inhomogeneous Poisson process with intensity $m_{t}= \mathbb E(\widetilde Z_t)$ that is the unique non-decreasing locally bounded solution of the following equation (see e.g. Delattre et al. \cite{DelattreFournierHoffmann2016AAP}):
\begin{equation} \label{mean-mean-eq}
m_t = \int_0^t \phi\left( \int_0^s h(s-u)dm_u\right)ds, \qquad t\geq 0.
\end{equation}
Since their seminal work, mean-field limits for Hawkes processes  
have attracted a lot of attention in the recent literature (e.g., see Agathe-Nerine \cite{Agathe-Nerine-2022SPA}, Borovykh et al. \cite{Borovykh-etal2018IISE},  Chevallier \cite{Chevallier2017SPA,Chevallier2018JSP}, Chevallier et al.\cite{Chevallier-etal2020SPA}, Delattre and Fournier \cite{DelattreFournier2016EJS},   Dittlevsen and L\"{o}cherbach \cite{Ditlevsen-etal2017SPA}, Duval et al.\cite{Duval-etal2022SPA}, 
Erny et al. \cite{Erny-etal2022Bernoulli}, L\"{o}cherbach \cite{Locherbach2019JTP}, Pfaffelhuber et al. \cite{Pfaffelhuber-etal2022SPA}, Raad et al. \cite{Raad-etal2020AIHP}, Schmutz \cite{Schmutz2022SPA}).
Fluctuations
of the \textit{mean process}:  
\begin{equation} \label{Hawkes-mean-process-eq-0}
\overline{Z}^{N}_t=\frac{1}{N}\sum_{i=1}^N Z_t^{N,i},\qquad t\geq 0,
\end{equation}
are studied by Gao and Zhu \cite{GaoFZhu2018SPA} and  Heesen and Stannat \cite{HeesenStannat2021SPA}.
Gao and Zhu \cite{GaoFZhu2018SPA} also studied the large deviations and moderate deviations for the mean process \eqref{Hawkes-mean-process-eq-0}.
In terms of the mean field \eqref{empirical-measure-MF-eq}, 
Gao and Zhu \cite{GaoFZhuMF-LDP2023} was the first one that obtained a large deviation principle.
In this paper, 
  we follow the same model setup as the first mean-field limit paper in the literature, i.e. \cite{DelattreFournierHoffmann2016AAP}, and we study the fluctuations of the  mean fields, 
  which is a central limit theorem that characterizes the fluctuations around the mean-field limit (Theorem~\ref{CLT-thm}) and the large deviations associated with the fluctuations, i.e.,  moderate deviations (Theorem~\ref{MDP-thm}), and this moderate deviation principle fills in the gap between the central limit theorem that is established in this paper (Theorem~\ref{CLT-thm}) and the large deviation principle obtained in \cite{GaoFZhuMF-LDP2023}.

This paper is organized as follows. In Section~\ref{sec:main},
we introduce the main results of the paper.  In Section~\ref{sec:exponential},  we provide some estimates for exponential moments of  Hawkes processes.  The fluctuation theorem  is shown in Section~\ref{CLTMeanFieldsSection} and the  moderate deviation principle is established in Section~\ref{MDPMeanFieldsSection}.  
Finally, in Section~\ref{sec:cor}, we show that our results (Theorem~\ref{CLT-thm} and Theorem~\ref{MDP-thm}) can recover 
the fluctuations and moderate deviations for the mean process obtained in \cite{GaoFZhu2018SPA}.

\section{Main Results}\label{sec:main}

\subsection{Preliminaries}\label{sec:pre}

In order to state our main results, let us first introduce the assumptions  and provide some preliminaries (cf.  \cite{DelattreFournierHoffmann2016AAP} and \cite{GaoFZhuMF-LDP2023}).
 

For each $N\geq 1$, we let $h_{ij}=\frac{1}{N}h$
and $\phi_{i}=\phi$ in \eqref{dynamics-sde} throughout the rest of the paper.
%
Moreover, we assume the following.

\begin{itemize}
\item[(A.1).]
$h(\cdot):[0,\infty)\rightarrow[0,\infty)$ is 
locally bounded,  i.e.,  for any $t>0$,
$$
\|h\|_0^t:=\sup_{s\in[0,t]}|h(s)|<\infty.
$$
Furthermore,  $h$ is differentiable and $|h'|$ is 
locally bounded.

\item[(A.2).]
$\phi(\cdot):[0,\infty)\rightarrow(0,\infty)$
is $\alpha$-Lipschitz for some $0<\alpha<\infty$, $T>0$ and 
$$
\alpha \| h\|_{L^{1}[0,T]} <1,
$$
where $\| h\|_{L^{1}[0,t]}:=\int_0^t|h(s)|ds$.

\item[(A.3).]
 $\phi'$ is uniformly continuous on $[0,\infty)$.
\end{itemize}

Next, let us introduce some technical backgrounds and notations.
\subsubsection*{The path space $D([0,T], \mathbb H^{-1}(\mathbb N))$}

Set
$$
C^{lip}(\mathbb N ):=\left\{\psi: \mathbb N \to\mathbb R;~\|\psi\|_{lip}<\infty\right\},
$$
where 
$$
\|\psi\|_{lip}:=|\psi(0)|+\sup_{x\ne y}\frac{|\psi(x)-\psi(y)|}{|x-y|}.
$$
Then $(C^{lip}(\mathbb N ),\|\cdot\|_{lip})$ is a Banach space. Let $\mathbb H^{-1}(\mathbb N):=C^{lip}(\mathbb N )^*$ be the topological dual space of  $(C^{lip}(\mathbb N ),\|\cdot\|_{lip})$, i.e.,  $\mathbb H^{-1}(\mathbb N):=\{\mu: C^{lip}(\mathbb N )\to \mathbb R  \mbox{ is  continuous linear mapping}\}$.
For any  $\mu\in \mathbb H^{-1}(\mathbb N)$, set
$$
\mu(x):=\mu\left(I_{\{x\}}\right),~~x\in \mathbb N,~~~\mbox{ and } \<\mu,\psi\>=\mu(\psi), ~~\psi\in(C^{lip}(\mathbb N ),\|\cdot\|_{lip}).
$$
Let $C_0(\mathbb N)$ be the space of bounded  continuous functions on $\mathbb N$ which  are equal  to $0$ at  $\infty$.   Denote by $\|\psi\|:=\sup_{x\in\mathbb N}|\psi(x)|$.  Note that
$C_0(\mathbb N)\subset C^{lip}(\mathbb N )$. Let $C_0(\mathbb N)^*$ be the dual space of $C_0(\mathbb N)$. Then we have $C_0(\mathbb N)^*\supset C^{lip}(\mathbb N )^*=\mathbb H^{-1}(\mathbb N)$ and by the Riesz representation theorem,  each $\mu \in\mathbb H^{-1}(\mathbb N)$ is a finite signed measure on $\mathbb N$, and
$$
 \<\mu,\psi\>=\sum_{x\in \mathbb N} \mu(x)\psi(x), ~~\psi\in(C^{lip}(\mathbb N ),\|\cdot\|_{lip}).
$$
We endow $\mathbb H^{-1}(\mathbb N)$  with the weak*  topology such that for $\mu_n,\mu\in \mathbb H^{-1}(\mathbb N)$,
$$
\mu_n\to\mu   \mbox{ if and only if } \<\mu_n,\psi\>\to \<\mu,\psi\>  \mbox{ for any } \psi\in C^{lip}(\mathbb N ).
$$
Let  $D([0,T], \mathbb H^{-1}(\mathbb N))$ denote the space of $ \mathbb H^{-1}(\mathbb N)$- valued c\`{a}dl\`{a}g functions on $[0,T] $ equipped with the topology of uniform convergence: 
$$
\mu^n\to \mu \mbox{  if and only if } 
\sup_{t\in[0,T]}|\<\mu_t^n-\mu_t,\psi\>|\to 0  \mbox{ for any } \psi\in C^{lip}(\mathbb N ).
$$
Define
$$
C^{lip}([0,T]\times\mathbb N ):=\left\{
\psi:[0,T]\times \mathbb N\to\mathbb R \text{  continuous with  }\sup_{t\in[0,T]}\|\psi(t)\|_{lip}<\infty \right\},
$$
where $\psi(t)(x):=\psi(t,x),~x\in\mathbb N$,
and
$$
C^{1,lip}([0,T]\times\mathbb N ):=\left\{\psi\in C^{lip}([0,T]\times\mathbb N );~\partial_t\psi \in C^{lip}([0,T]\times\mathbb N )
\right\}.
$$

Define $\nabla \psi(s,x):=\psi(s,x+1)-\psi(s,x)$ for any $x=0,1,2,\ldots$, and for any $s\in[0,T]$, denote by $\nabla \psi(s):\mathbb{N}\ni x\rightarrow\nabla\psi(s,x)$. We also denote 
$$
\|\psi\|:=\sup_{t\in[0,T]}\sup_{x\in\mathbb{N}}|\psi(t,x)|,
$$ 
for any $\psi\in C([0,T]\times\mathbb N )$.    

 
\subsubsection*{Mean-field densities}

We define  the mean-field density  and  the  centered mean-field density of the Hawkes processes as follows:
\begin{equation}\label{empirical-measure-def-eq-1}
L_t^N(x):=\frac{1}{N}\sum_{i=1}^N \delta_{Z_t^{N,i}}(\{x\}),~~ t\geq 0, ~x\in\mathbb N,
\end{equation}
and
\begin{equation}\label{central-mu-clt-def-1}
 \widehat{L}_t^N(x) := \frac{1}{\sqrt{N}}\sum_{i=1}^N
\left( \delta_{Z_t^{N,i}}(\{x\})-\mathcal L_t(x)\right),~~ t\geq 0, ~x\in\mathbb N,
\end{equation}
where $\mathcal L_t(x):=\mathcal L_t(\{x\})$, and $\mathcal L_t$  is the distribution of $\widetilde Z_t$ which is defined by
\eqref{Hawkes-mean-eq}.

We also consider the following rescaled centered mean-field density
\begin{equation}\label{central-mu-mdp-def-1}
 \widetilde{L}_t^N(x) := \frac{1}{\sqrt{N}a(N)}\sum_{i=1}^N
\left( \delta_{Z_t^{N,i}}(\{x\})-\mathcal L_t(x)\right),~~ t\geq 0, ~x\in\mathbb N,
\end{equation}
where $a(N),N\geq 1$ is a positive sequence with
\begin{equation}\label{mdp-speed-seq}
\lim_{N\to \infty} a(N)  = \infty,\qquad
 \lim_{N\to \infty} \frac{a(N)}{\sqrt{N}} =0.
\end{equation}

\subsubsection*{Large deviation principle}
A sequence $\{P_{n}, n\in\mathbb{N}\}$ of probability measures on a topological space $X$
is said to satisfy the large deviation principle (LDP) with the rate function $I:X\rightarrow\mathbb{R}\cup\{\infty\}$ 
and the speed $b_{n}$
if $I$ is non-negative,
lower semicontinuous and $b_{n}$ is a positive sequence with $\lim_{n\rightarrow\infty}b_{n}=\infty$
and for any measurable set $A$, with $A^{o}$ denoting the interior of $A$ and $\overline{A}$ being its closure,
\begin{equation}\label{eqn:LDP}
-\inf_{x\in A^{o}}I(x)\leq\liminf_{n\rightarrow\infty}\frac{1}{b_{n}}\log P_{n}(A)
\leq\limsup_{n\rightarrow\infty}\frac{1}{b_{n}}\log P_{n}(A)\leq-\inf_{x\in\overline{A}}I(x).
\end{equation}
We refer to Dembo and Zeitouni \cite{DemboZeitouni1998Book}
and Varadhan \cite{Varadhan1984Book}
for general background regarding the theory of large deviations and their applications.

\subsection{Main results}\label{sec:main:results}

Next, let us state the two main results in this paper. 
The first main result is a fluctuation theorem of the mean fields, 
which is a central limit theorem that characterizes the fluctuations
around the mean-field limit.

\begin{thm}[Fluctuation theorem of the mean fields]\label{CLT-thm}
Suppose (A.1), (A.2)  and   (A.3) hold.  Then
 $\left\{\widehat{L}^N,N\geq 1\right\}$ converges weakly   in  $D([0,T], \mathbb H^{-1}(\mathbb N)) $  to the Gaussian process $X_t(x)$ defined by
\begin{equation}\label{CLT-thm-eq}
\begin{aligned}
\<X_T,\varphi(T)\>&=  \int_0^T \<X_s,\partial_s\varphi(s)\>ds+
 \int_0^{T}\<X_s, \nabla \varphi(s)\>    \phi\left(\int_0^{s} h(s-u)d m_u\right)ds\\
&\qquad+ \int_0^{T}\<\mathcal L_s,\nabla \varphi(s)\> \phi'\left(\int_0^{s} h(s-u)d m_u\right) \int_0^{s} h(s-u)d\<X_u,\ell\>ds\\
&\qquad\qquad+ \int_0^T \sqrt{  \phi\left(\int_0^{s}
h(s-u)d m_u\right) }\left\<\sqrt{\mathcal L_s}\nabla\varphi(s), d  B_s\right\>,
\end{aligned}
\end{equation}
for any $\varphi \in C^{1,lip}([0,T]\times\mathbb N)$, where $\ell(x)=x$ for any $x\in\mathbb N$,  $B_t=\{\{B_t(x),t\geq 0\},x\in\mathbb N\}$ is a sequence of independent standard Brownian motions, and $\left\<\sqrt{\mathcal L_s}\nabla\varphi(s), d  B_s\right\>:=\sum_{x\in\mathbb N}  \sqrt{\mathcal L_s(x)}\nabla\varphi(s,x)d  B_s(x)$.
\end{thm}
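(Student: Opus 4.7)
My plan is to derive the fluctuation limit via a semimartingale-decomposition strategy. For any test function $\varphi \in C^{1,lip}([0,T]\times\mathbb{N})$, I would first apply Itô's formula to each $\varphi(t, Z^{N,i}_t)$ and exploit the exchangeability, which makes the $\mathcal{F}_s$-intensity $\lambda^N_s := \phi\bigl(\int_0^{s-} h(s-u)\,d\<L^N_u, \ell\>\bigr)$ common to all particles. Averaging over $i$ gives the prelimit identity
\[
\<L^N_t, \varphi(t)\> = \varphi(0,0) + \int_0^t \<L^N_s, \partial_s\varphi(s)\>\,ds + \int_0^t \<L^N_s, \nabla\varphi(s)\>\,\lambda^N_s\,ds + M^N_t(\varphi),
\]
where $M^N_t(\varphi) = \frac{1}{N}\sum_{i=1}^N \int_0^t \nabla\varphi(s, Z^{N,i}_{s-})\,(dZ^{N,i}_s - \lambda^N_s\,ds)$ is a purely discontinuous martingale. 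The mean-field limit $\mathcal{L}_t$ satisfies the analogous identity with $\lambda^N_s$ replaced by $\phi\bigl(\int_0^s h(s-u)\,dm_u\bigr)$ and no martingale term (this is the forward Kolmogorov equation for the inhomogeneous Poisson process $\widetilde Z$). Subtracting and multiplying by $\sqrt{N}$ produces the prelimit equation for $\<\widehat L^N_t, \varphi(t)\>$.

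I would then linearize the drift by a first-order Taylor expansion of $\phi$ around $A_s := \int_0^s h(s-u)\,dm_u$: writing $A^N_s := \int_0^s h(s-u)\,d\<L^N_u, \ell\>$, one has $\phi(A^N_s) = \phi(A_s) + \phi'(A_s)(A^N_s - A_s) + R^N_s$. After multiplication by $\sqrt{N}$ the middle term becomes exactly $\phi'(A_s)\int_0^s h(s-u)\,d\<\widehat L^N_u, \ell\>$, matching the second line of \eqref{CLT-thm-eq}, while $\sqrt{N}R^N_s$ vanishes in probability thanks to the uniform continuity of $\phi'$ from (A.3) together with the $L^2$-estimate $\mathbb E[(A^N_s - A_s)^2] = O(1/N)$. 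This estimate, and the uniform moment bounds on $\sup_{t\le T}Z^{N,i}_t$ needed throughout, come from the exponential moment estimates of Section~\ref{sec:exponential}.

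For the Gaussian martingale appearing in the last line of \eqref{CLT-thm-eq}, I compute the predictable quadratic variation
\[
\<\sqrt{N}\,M^N(\varphi)\>_t = \int_0^t \lambda^N_s\,\<L^N_s, (\nabla\varphi(s))^2\>\,ds \longrightarrow \int_0^t \phi(A_s)\,\<\mathcal L_s, (\nabla\varphi(s))^2\>\,ds
\]
in probability, and note that the jump sizes of $\sqrt{N}M^N(\varphi)$ are of order $N^{-1/2}\|\nabla\varphi\|$; the functional martingale CLT for purely discontinuous martingales then identifies the limit with the stochastic integral $\int_0^T \sqrt{\phi(A_s)}\,\<\sqrt{\mathcal L_s}\,\nabla\varphi(s), dB_s\>$ in \eqref{CLT-thm-eq}.

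The hardest step will be tightness of $\{\widehat L^N\}$ in $D([0,T], \mathbb H^{-1}(\mathbb N))$ endowed with the uniform topology, since one must control $\sup_{t\le T}|\<\widehat L^N_t,\psi\>|$ uniformly over the unit ball of $(C^{lip}(\mathbb N),\|\cdot\|_{lip})$, and the Lipschitz weight makes unbounded observables such as $\ell(x)=x$ admissible. I would first establish tightness of $\<\widehat L^N_\cdot,\psi\>$ in $D([0,T],\mathbb R)$ for each such $\psi$ via Aldous' criterion, combined with the exponential-moment bounds from Section~\ref{sec:exponential} on $\sup_{t\le T}Z^{N,i}_t$; the linear dependence of the prelimit identity on $\psi$ then allows one to upgrade to joint tightness in the weak-$*$ topology of $\mathbb H^{-1}(\mathbb N)$. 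Uniqueness of the limit, which identifies subsequential limits as $X$, is obtained from a Gronwall estimate on the linear equation \eqref{CLT-thm-eq}, using $\alpha\|h\|_{L^1[0,T]}<1$ from (A.2) and the Lipschitz bounds on $\phi,\phi'$.
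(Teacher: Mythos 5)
Your plan reproduces the paper's proof structure almost exactly: the same It\^{o}-formula semimartingale decomposition, the same linearization of $\phi$ around $\int_0^s h(s-u)\,dm_u$ with the remainder controlled by uniform continuity of $\phi'$ together with the $O(1/N)$ second-moment bound from Section~\ref{sec:exponential}, the same martingale CLT via predictable quadratic variation and vanishing jump size, tightness via the exponential-moment bounds, and uniqueness by Gronwall. The one spot worth flagging is uniqueness: a single Gronwall bound on $\sup_x|\overline{X}_t(x)|$ does not close by itself because of the nonlocal term $\int_0^s h(s-u)\,d\<\overline{X}_u,\ell\>$ (where $\ell$ is unbounded), and the paper must argue in two stages, first testing with $\varphi=\ell$ (where $\<\overline{X}_s,\nabla\ell\>=0$ decouples the equation) to kill $\<\overline{X},\ell\>$, then recursing over the states $n$ with $\varphi_n=I_{[n+1,\infty)}$; also, that step uses the Lipschitz bound on $\phi'$ and $h$ but not the stability condition $\alpha\|h\|_{L^1[0,T]}<1$, which enters only through the exponential-moment estimates.
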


\begin{rmk} 
Note that the conclusion of Theorem~\ref{CLT-thm} holds in $D([0,t], \mathbb H^{-1}(\mathbb N)) $ for any $t\in [0,T]$. In particular,  the equation~\eqref{CLT-thm-eq} holds for any $0\leq t\leq T$. 
 \end{rmk}

We can apply continuous mapping theorem and Theorem~\ref{CLT-thm}
to recover the fluctuation theorem for mean process of $\overline{Z}^{N}_t$ (defined in \eqref{Hawkes-mean-process-eq-0})
obtained in Gao and Zhu \cite{GaoFZhu2018SPA} (see Theorem~23 and Remark~7 in \cite{GaoFZhu2018SPA}).
The proof of the following corollary will be postponed to Section~\ref{sec:cor}.

\begin{cor}[Fluctuation theorem of the mean processes]\label{CLT-thm-mean-p}
Suppose (A.1), (A.2)  and   (A.3) hold.  Then
 $\left\{\sqrt{N}\left(\overline{Z}^{N}_t-m_{t}\right),0\leq t\leq T\right\}$
 converges weakly   in  $D([0,T], \mathbb R) $  to the Gaussian process $\overline{X}_t$ defined by
\begin{equation}\label{CLT-thm-mean-p-eq}
\begin{aligned}
\overline{X}_t&=  \int_0^{t}  \phi'\left(\int_0^{s} h(s-u)d m_u\right) \int_0^{s} h(s-u)d\overline{X}_uds\\
&\qquad\qquad+ \int_0^t \sqrt{  \phi\left(\int_0^{s}
h(s-u)d m_u\right) } dW_s,\qquad t\in[0,T],
\end{aligned}
\end{equation}
where   $W_t $ is a  standard Brownian motion.
\end{cor}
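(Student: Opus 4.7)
The plan is to derive the corollary from Theorem~\ref{CLT-thm} via the continuous mapping theorem applied to evaluation against $\ell(x):=x$. Since $\|\ell\|_{lip}=|\ell(0)|+\sup_{x\ne y}|x-y|/|x-y|=1<\infty$, one has $\ell\in C^{lip}(\mathbb N)$, and because $L^N_t$ and $\mathcal L_t$ are supported on $\mathbb N$,
\begin{equation*}
\overline Z^N_t=\sum_{x\in\mathbb N}x\,L^N_t(x)=\langle L^N_t,\ell\rangle,\qquad m_t=\mathbb E[\widetilde Z_t]=\langle\mathcal L_t,\ell\rangle,
\end{equation*}
whence $\sqrt{N}(\overline Z^N_t-m_t)=\langle\widehat L^N_t,\ell\rangle$. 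Since the topology on $D([0,T],\mathbb H^{-1}(\mathbb N))$ is uniform convergence tested against each $\psi\in C^{lip}(\mathbb N)$, evaluation against the fixed test function $\ell$ is continuous into $D([0,T],\mathbb R)$ endowed with the sup norm. Combining Theorem~\ref{CLT-thm} with the continuous mapping theorem yields $\sqrt{N}(\overline Z^N-m)\Rightarrow\langle X,\ell\rangle=:\overline X$ in $D([0,T],\mathbb R)$.

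To identify $\overline X$ with the SDE~\eqref{CLT-thm-mean-p-eq}, I substitute $\varphi(s,x)=x$ into~\eqref{CLT-thm-eq}; this function is in $C^{1,lip}([0,T]\times\mathbb N)$ since $\partial_s\varphi\equiv 0$ and $\sup_s\|\varphi(s)\|_{lip}=1$. For this choice, $\nabla\varphi(s,x)\equiv 1$ and $\langle\mathcal L_s,\nabla\varphi(s)\rangle=\sum_x\mathcal L_s(x)=1$. The crucial observation is that $\langle X_s,1\rangle=0$ for every $s$: for every $N$,
\begin{equation*}
\langle\widehat L^N_s,1\rangle=\sqrt{N}\sum_{x\in\mathbb N}\bigl(L^N_s(x)-\mathcal L_s(x)\bigr)=\sqrt{N}(1-1)=0,
\end{equation*}
because $L^N_s$ and $\mathcal L_s$ are probability measures on $\mathbb N$ and the constant function $1$ lies in $C^{lip}(\mathbb N)$, so this identity passes to the weak limit. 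Consequently the $\partial_s\varphi$ term and the first drift integral in~\eqref{CLT-thm-eq} both vanish, reducing the equation (invoked at a generic $t\in[0,T]$ via the Remark following Theorem~\ref{CLT-thm}) to
\begin{equation*}
\overline X_t=\int_0^t\phi'\!\left(\int_0^s h(s-u)\,dm_u\right)\!\int_0^s h(s-u)\,d\overline X_u\,ds+\int_0^t\sqrt{\phi\!\left(\int_0^s h(s-u)\,dm_u\right)}\,dM_s,
\end{equation*}
where $M_s:=\sum_{x\in\mathbb N}\int_0^s\sqrt{\mathcal L_u(x)}\,dB_u(x)$.

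The last step is to recognize $M$ as a standard Brownian motion. Since the $B(x)$'s are independent standard Brownian motions and $\sum_{x\in\mathbb N}\mathcal L_u(x)=1$, $M$ is a continuous Gaussian martingale whose quadratic variation equals $\int_0^s\sum_{x\in\mathbb N}\mathcal L_u(x)\,du=s$, so L\'evy's characterization identifies it with a standard Brownian motion, which we rename $W$ to obtain~\eqref{CLT-thm-mean-p-eq}. The only delicate point in the argument is the verification that $\langle X_s,1\rangle=0$, which is a conservation-of-probability statement inherited from the pre-limit; everything else is direct substitution into~\eqref{CLT-thm-eq} and an application of L\'evy's theorem.
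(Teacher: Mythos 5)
Your proof is correct and follows essentially the same route as the paper's: continuity of $\mu\mapsto\langle\mu,\ell\rangle$ from $D([0,T],\mathbb H^{-1}(\mathbb N))$ to $D([0,T],\mathbb R)$ plus the continuous mapping theorem, then substituting $\varphi=\ell$ into \eqref{CLT-thm-eq} and using $\langle X_s,\nabla\ell\rangle=\langle X_s,1\rangle=0$, $\langle\mathcal L_s,\nabla\ell\rangle=\langle\mathcal L_s,(\nabla\ell)^2\rangle=1$, and the identification of $W_s=\langle\sqrt{\mathcal L_s},B_s\rangle$ as a standard Brownian motion. Your version spells out a couple of points the paper leaves implicit (the conservation-of-probability justification for $\langle X_s,1\rangle=0$ and L\'evy's characterization for $W$), but it is the same argument.
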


Next, let us state the second main result of the paper, which is a moderate deviation principle of the mean fields.

\begin{thm}[Moderate deviation principle of the mean fields]\label{MDP-thm}
 Suppose (A.1), (A.2)  and   (A.3) hold.  
For  any $\mu\in D([0,T], \mathbb H^{-1}(\mathbb N)) $  and $\varphi \in C^{1,lip}([0,T]\times\mathbb N)$, define
\begin{equation}\label{mdp-rate-function-eq-1}
\begin{aligned}
{\Upsilon}_\mu(\varphi):=&\< \mu_T,\varphi(T)\>- \int_0^{T}\< \mu_{t},\partial_t \varphi(t)\>   dt-\int_0^{T}\< \mu_t, \nabla \varphi(t)\>   \phi\left(\int_0^{t} h(t-s)d m_s\right)  dt\\
&-\int_0^{T}\< \mathcal L_t, \nabla \varphi(t)\> \phi'\left(\int_0^{t}
h(t-s)d m_s\right)  \int_0^{t}
h(t-s)d \<\mu_s,\ell\> dt,
\end{aligned}
\end{equation}
where $\ell(x)=x$ for any $x\in\mathbb N$,  and also define
\begin{equation}\label{mdp-rate-function-eq-2}
\begin{aligned}
J_\mu(\varphi):=&{\Upsilon}_\mu(\varphi)
- \frac{1}{2}\int_0^T\left\< \mathcal L_t, (\nabla \varphi(t))^2\right\>  \phi\left(\int_0^{t}
h(t-s)d m_s\right) dt.
\end{aligned}
\end{equation}
Then $\mathbb{P}\left(\widetilde{L}^N\in\cdot\right)$ satisfies a large deviation principle on $D([0,T], \mathbb H^{-1}(\mathbb N))$
with the speed $a^2(N)$ and the rate function ${I}$ that is defined by
\begin{equation}\label{mdp-rate-function-eq-3}
{I}(\mu):=\sup \left\{J_\mu(\varphi);\varphi\in C^{1,lip} ([0,T]\times\mathbb N)\right\}\,.
\end{equation}
\end{thm}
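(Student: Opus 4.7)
The plan is to follow the classical two-step program for infinite-dimensional LDPs: first establish exponential tightness of $\{\widetilde L^N\}$ in $D([0,T],\mathbb H^{-1}(\mathbb N))$ at speed $a^2(N)$, and then identify the rate function via an exponential-martingale / G\"artner--Ellis argument. Exponential tightness will follow from the exponential moment bounds proved in Section~\ref{sec:exponential}: one controls the oscillations of $t\mapsto \<\widetilde L^N_t,\varphi\>$ uniformly over $\varphi$ in a bounded ball of $C^{lip}(\mathbb N)$ via Bernstein-type estimates for the compensated Poisson martingale underlying~\eqref{dynamics-sde}, rescaled by $1/(a(N)\sqrt N)$.

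For the identification, I would apply It\^o's formula to $\varphi(t,Z^{N,i}_t)$ for $\varphi\in C^{1,lip}([0,T]\times\mathbb N)$, sum over $i$, divide by $N$, and subtract the corresponding deterministic identity driven by~\eqref{mean-mean-eq}. This yields a semimartingale decomposition
\begin{equation*}
\<\widetilde L^N_T,\varphi(T)\>=\<\widetilde L^N_0,\varphi(0)\>+\int_0^T\<\widetilde L^N_t,\partial_t\varphi(t)\>\,dt+D^N_T(\varphi)+\mathcal M^N_T(\varphi)+R^N_T(\varphi),
\end{equation*}
where $D^N_T(\varphi)$ matches the two drift pieces of ${\Upsilon}_\mu(\varphi)$ once $L^N$ is replaced by $\mathcal L$ inside the $\phi'$ coefficient (allowed by~\eqref{empirical-measure-MF-eq}), $\mathcal M^N_T(\varphi)$ is the Poisson-compensated martingale, and $R^N_T(\varphi)$ is a remainder coming from the nonlinearity of $\phi$. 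The key device is the exponential martingale
\begin{equation*}
\mathcal E^{N,\varphi}_T:=\exp\!\left(a^2(N)\bigl[\<\widetilde L^N_T,\varphi(T)\>-\<\widetilde L^N_0,\varphi(0)\>-\int_0^T\<\widetilde L^N_t,\partial_t\varphi(t)\>\,dt-D^N_T(\varphi)\bigr]-\Lambda^N_T(\varphi)\right),
\end{equation*}
with $\Lambda^N_T(\varphi)$ the Poisson compensator of jumps of size $(a(N)/\sqrt N)\nabla\varphi$.

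Since $a(N)/\sqrt N\to 0$, the Taylor identity $e^f-1-f=f^2/2+O(f^3)$ applied inside the compensator yields $a^{-2}(N)\Lambda^N_T(\varphi)\to \tfrac12\int_0^T\<\mathcal L_t,(\nabla\varphi(t))^2\>\,\phi\bigl(\int_0^t h(t-s)\,dm_s\bigr)\,dt$, which is exactly the quadratic correction separating $J_\mu$ from ${\Upsilon}_\mu$ in~\eqref{mdp-rate-function-eq-2}. A parallel first-order Taylor expansion of $\phi$ around its mean-field argument produces the $\phi'$ cross-term of ${\Upsilon}_\mu$, and the remaining second-order piece is absorbed into $R^N_T(\varphi)$. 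Showing $\lim_N a^{-2}(N)\log\mathbb E[\mathcal E^{N,\varphi}_T]=0$ for every $\varphi\in C^{1,lip}([0,T]\times\mathbb N)$, a Bryc / G\"artner--Ellis argument combined with exponential tightness delivers the LDP with rate function $\sup_\varphi J_\mu(\varphi)$ in~\eqref{mdp-rate-function-eq-3}; goodness of $I$ follows because $C^{1,lip}([0,T]\times\mathbb N)$ is a separating family and the sublevel sets are automatically closed under the weak-$*$ uniform topology.

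The main technical obstacle is proving exponentially negligible control of $R^N_T(\varphi)$. The history integral $\int_0^s h(s-u)\,dL^N_u$ deviates from $\int_0^s h(s-u)\,dm_u$ on scale $1/(a(N)\sqrt N)$, so the second-order Taylor piece involves squared history deviations whose contribution to $a^{-2}(N)\log\mathbb E[\cdot]$ must be shown to vanish. Since assumption~(A.3) provides only uniform continuity of $\phi'$, not a uniform bound on $\phi''$, this step must be executed with a modulus-of-continuity estimate combined with the exponential moment bounds of Section~\ref{sec:exponential} applied to iterated history integrals; this is the step that ties the argument back to the preparatory exponential estimates.
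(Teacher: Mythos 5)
Your proposal correctly identifies the exponential-tightness step (via the moment bounds of Section~\ref{sec:exponential}) and the exponential-martingale plus Taylor-expansion mechanism that produces the quadratic correction term $\frac12\int_0^T\<\mathcal L_t,(\nabla\varphi(t))^2\>\phi(\cdot)\,dt$; this matches the paper's proof of the upper bound (Section~\ref{upper:bound:MDP}, Theorem~\ref{Up-bound-thm} together with Lemmas~\ref{MDP-lem-4} and~\ref{MDP-lem-5}). But the lower bound is a genuine gap. You write that ``a Bryc / G\"artner--Ellis argument combined with exponential tightness delivers the LDP,'' and this is not justified in the present infinite-dimensional, path-space setting. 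G\"artner--Ellis and its Banach-space generalizations (e.g.\ Baldi's theorem) require verifying either essential smoothness / steepness of the limiting cumulant generating functional on the dual, or density of exposed points, and none of these conditions is checked or obviously checkable for the $D([0,T],\mathbb H^{-1}(\mathbb N))$-valued process at hand. Knowing that $\lim_N a^{-2}(N)\log\mathbb E[\mathcal E^{N,\varphi}_T]=0$ for each $\varphi\in C^{1,lip}([0,T]\times\mathbb N)$ gives you the Chernoff-type estimates for the upper bound (after a compactness and open-cover argument, as in Theorem~\ref{Up-bound-thm}), but it does not by itself yield the local lower bound.

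What is missing is precisely the change-of-measure argument the paper builds in Sections~\ref{sec:perturbed} and~\ref{lower:bound:MDP}: for each $\psi\in C^{1,lip}([0,T]\times\mathbb N)$ one introduces the perturbed Hawkes process~\eqref{Pertur-Hawkes-psi-eq}, whose intensity is tilted by the factor $e^{(a(N)/\sqrt N)\nabla\psi(s,\cdot)}$, proves that its rescaled, centered mean field $\widetilde L^{N,\psi}$ converges to the unique solution $\mu^\psi$ of the linear PDE~\eqref{Pertur-lim-equation-eq-1} (Theorem~\ref{Pertur-Hawkes-LLN-thm}), and then uses the explicit Radon--Nikodym derivative $d\mathbb P^{\psi,N}/d\mathbb P^{0,N}$ together with this law of large numbers and the variational identification of $I$ (Lemma~\ref{MDP-rate-f-property-lem}) to obtain the lower bound via Jensen's inequality. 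Without that tilted-process construction and its LLN, the lower bound does not follow from the martingale estimates alone. As a smaller remark: your assertion that goodness of the rate function is ``automatic because $C^{1,lip}$ is separating'' conflates lower semicontinuity (which does follow, being a supremum of continuous affine functionals) with compactness of sublevel sets, which instead comes along for free once the full LDP with exponential tightness has been established, not before.
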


We can apply contraction principle and Theorem~\ref{MDP-thm}
to recover the moderate deviations for mean process of $\overline{Z}^{N}_t$ (defined in \eqref{Hawkes-mean-process-eq-0})
obtained in Gao and Zhu \cite{GaoFZhu2018SPA} (see Theorem~25 in \cite{GaoFZhu2018SPA}).
The proof of the following corollary will be postponed to Section~\ref{sec:cor}.

\begin{cor}[Moderate deviations of the mean processes]\label{MDP-thm-mean-p}
Suppose (A.1), (A.2)  and   (A.3) hold.  Then 
$\mathbb{P}\left(\left\{\frac{\sqrt{N}(\overline{Z}_{t}^{N}-m_t)}{a(N)},0\leq t\leq T\right\}\in\cdot\right)$ satisfies a large deviation principle on $D([0,T], \mathbb R)$
with the speed $a^2(N)$ and the rate function ${J}$ defined by
\begin{equation}\label{plug:into-rate-J-eq-4}
J(\eta)=
\begin{cases}
\frac{1}{2}\int_{0}^{T}\frac{\left(\partial_t\eta_{t}-\phi'\left(\int_0^{t}
h(t-s)d m_s\right)  \int_0^{t}h(t-s)d \eta_{s}\right)^2}{\phi\left(\int_{0}^{t}h(t-s)dm_{s}\right)}dt& \text{if $\eta=\{\eta_{t},0\leq t\leq T\} \in\mathcal{AC}_{0}[0,T]$},
\\
+\infty  & \text{otherwise}.
\end{cases}
\end{equation}
where $\mathcal{AC}_{0}[0,T]$ denotes the space of functions
$f:[0,T]\rightarrow\mathbb{R}$ that are absolutely continuous
with $f(0)=0$.
\end{cor}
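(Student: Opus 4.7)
The plan is to deduce Corollary~\ref{MDP-thm-mean-p} from Theorem~\ref{MDP-thm} via the contraction principle. The key observation is that the mean process is a continuous linear functional of the mean-field density: since $\ell(x):=x$ belongs to $C^{lip}(\mathbb N)$ with $\|\ell\|_{lip}=1$, we have $\overline Z^N_t=\<L^N_t,\ell\>$ and $m_t=\<\mathcal L_t,\ell\>$, hence
\begin{equation*}
\frac{\sqrt N(\overline Z^N_t - m_t)}{a(N)} = \<\widetilde L^N_t,\ell\>=:\Pi(\widetilde L^N)_t.
\end{equation*}
Because the topology on $D([0,T],\mathbb H^{-1}(\mathbb N))$ is uniform convergence of the $C^{lip}(\mathbb N)$-pairings, the map $\Pi:D([0,T],\mathbb H^{-1}(\mathbb N))\to D([0,T],\mathbb R)$ is continuous. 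The contraction principle applied to Theorem~\ref{MDP-thm} then yields the LDP for $\Pi(\widetilde L^N)$ at speed $a^2(N)$ with rate $\widetilde J(\eta):=\inf\{I(\mu):\Pi(\mu)=\eta\}$, and the task reduces to showing $\widetilde J=J$ with $J$ as in \eqref{plug:into-rate-J-eq-4}.

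For the lower bound $\widetilde J(\eta)\ge J(\eta)$, I would take any $\mu$ with $\Pi(\mu)=\eta$ and restrict the supremum defining $I(\mu)$ to the one-parameter family $\varphi_c(t,x):=c(t)x$, $c\in C^1([0,T])$; these lie in $C^{1,lip}([0,T]\times\mathbb N)$ since $\|\varphi_c(t)\|_{lip}=|c(t)|$, and $\nabla\varphi_c(t,\cdot)\equiv c(t)$. A preliminary check with $\varphi$ constant in $x$ forces $\<\mu_t,\mathbf 1\>\equiv 0$ whenever $I(\mu)<\infty$. Integration by parts (using $\eta_0=0$, inherited from $Z^{N,i}_0=\widetilde Z_0=0$) then reduces $J_\mu(\varphi_c)$ to
\begin{equation*}
\int_0^T c(t)g_\eta(t)\,dt - \frac{1}{2}\int_0^T c(t)^2\phi\!\left(\int_0^t h(t-s)dm_s\right)dt,\qquad g_\eta(t):=\partial_t\eta_t-\phi'\Bigl(\!\int_0^t h(t-s)dm_s\!\Bigr)\!\int_0^t h(t-s)d\eta_s,
\end{equation*}
which is meaningful for $\eta\in\mathcal{AC}_0[0,T]$ and whose supremum is $+\infty$ otherwise; pointwise maximization in $c$ yields exactly $J(\eta)$.

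For the matching upper bound $\widetilde J(\eta)\le J(\eta)$ I would exhibit a (near-)minimizer. Set $c^\ast(t):=g_\eta(t)/\phi(\int_0^t h(t-s)dm_s)$ and, guided by the CLT in Theorem~\ref{CLT-thm}, define $\mu^\ast$ as the solution in $D([0,T],\mathbb H^{-1}(\mathbb N))$ of the linear Volterra equation obtained from \eqref{CLT-thm-eq} by replacing the Brownian integral with the deterministic signal $\int_0^T c^\ast(s)\phi(\cdot)\<\mathcal L_s,\nabla\varphi(s)\>ds$; existence and uniqueness follow by a standard Gronwall estimate under (A.1)--(A.3). By construction, $\Upsilon_{\mu^\ast}(\psi)=\int_0^T c^\ast(t)\phi(\cdot)\<\mathcal L_t,\nabla\psi(t)\>dt$ for every test $\psi$, so $\varphi_{c^\ast}$ is a critical point of the quadratic functional $\varphi\mapsto J_{\mu^\ast}(\varphi)$, and since the quadratic part $Q(\varphi):=\int_0^T\<\mathcal L_t,(\nabla\varphi(t))^2\>\phi(\cdot)dt$ is positive semidefinite this critical point is a global maximum, giving $I(\mu^\ast)=J_{\mu^\ast}(\varphi_{c^\ast})=J(\eta)$; testing against $\varphi_c$ also gives $\Pi(\mu^\ast)=\eta$. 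The main obstacle is the regularity mismatch: when $\eta$ is merely in $\mathcal{AC}_0[0,T]$, the optimal $c^\ast$ need not lie in $C^1([0,T])$, so $\varphi_{c^\ast}$ falls outside $C^{1,lip}([0,T]\times\mathbb N)$; this is handled by a density/regularization argument, approximating $c^\ast$ in $L^2(\phi\,dt)$ by a sequence in $C^1$, constructing the corresponding $\mu^\ast_n$, and passing to the limit using the lower semicontinuity of $I$ and the continuity of $\Pi$ to produce the required near-minimizers.
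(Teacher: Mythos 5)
Your proposal is correct and follows essentially the same route as the paper: deduce the LDP for $\<\widetilde L^N,\ell\>$ from Theorem~\ref{MDP-thm} via the contraction principle, then identify $\inf_{\Pi(\mu)=\eta}I(\mu)$ with the explicit formula by plugging test functions linear in $x$ and constructing the optimizing $\mu^*$ from the Volterra equation with $g(t,x)\equiv c^*(t):=g_\eta(t)/\phi(\int_0^t h(t-s)dm_s)$. The one cosmetic difference is in the lower bound: the paper invokes the Riesz representation from Lemma~\ref{MDP-rate-f-property-lem} and then applies Cauchy--Schwarz to $\<\mathcal L_t,\nabla\psi(t)\>$ after substituting $\varphi=\ell$ in the limiting PDE, whereas you restrict the supremum defining $I(\mu)$ directly to the family $\varphi_c(t,x)=c(t)x$; the two computations are interchangeable, and your explicit handling of the regularity of $c^*$ via approximation in $\mathbb L^2$ is a reasonable way to make rigorous a point that the paper leaves implicit through its $\mathbb H^1/\mathbb L^2$ framework.
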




\section{Some Estimates for Exponential Moments}\label{sec:exponential}

In this section, we  establish some estimates for exponential moments
and these estimates for exponential moments will be used in the proof of the fluctuation theorem (Theorem~\ref{CLT-thm}) in Section~\ref{CLTMeanFieldsSection}
and the moderate deviation principle (Theorem~\ref{MDP-thm}) in Section~\ref{MDPMeanFieldsSection}.

First, in the following technical lemma, we provide estimates for exponential moments
for the mean process $\overline{Z}_{T}^{N}$ defined in \eqref{Hawkes-mean-process-eq-0}.

\begin{lem}\label{Basic-lem-0}
Suppose (A.1), (A.2)  and   (A.3) hold.  

(1).  There exists $\theta_0>0$ such that  for any $\theta\in [0,\theta_0]$,    and for all  $N\geq 1$,
\begin{equation}\label{Basic-lem-0-eq-1}
\mathbb{E}\left(e^{\theta N\overline{Z}_{T}^{N}}\right)
\leq e^{2 N\theta\phi(0)T/(1-\alpha\| h\|_{L^{1}[0,T]})}.
\end{equation}
In particular, there exists $\hat\theta_0>0$ such that  
\begin{equation}\label{Basic-lem-0-eq-2}
\sup_{N\geq 1}\mathbb{E}\left(e^{\hat\theta_0\overline{Z}_{T}^{N}}\right)<\infty.
\end{equation}

(2).   There exists $\hat{\theta}_0>0$ such that  for any $r>0$,  $\theta\in (0,\hat{\theta}_0)$,
\begin{equation}\label{Basic-lem-0-eq-3}
\begin{aligned}
&\mathbb P\left(A_0N\sup_{0\leq t\leq T}\left|\overline{Z}_{t}^{N}-m_{t}\right|\geq r\right)
\leq 
\begin{cases}
2\exp\left\{-\frac{r^2}{2 A_1N }\right\} & \text{if $r \leq  A_1A_0^2N\hat{\theta}_0$},
\\
2\exp\left\{-\frac{\hat{\theta}_0 r}{2A_0}\right\} & \text{if $r> A_1A_0^2N\hat{\theta}_0$},
\end{cases}
\end{aligned}
\end{equation}
where 
\begin{equation*}
A_0:= e^{-\|\phi\|_{lip}\left(h(0)+\| h'\|_{L^{1}[0,T]}\right)T},  
\qquad
A_1:=\frac{6 \phi(0)T(1+ 1/(1-\alpha\| h\|_{L^{1}[0,T]}))}{A_0^2}.  
\end{equation*}
Furthermore, for any  $\theta\in (0,\hat\theta_0/4]$,
\begin{equation}\label{Basic-lem-0-eq-4}
\begin{aligned}
&\mathbb E\left(\exp\left\{A_0 \theta N\sup_{0\leq t\leq T}\left|\overline{Z}_{t}^{N}-m_{t}\right|\right\}\right)\\
&\leq 2\theta  \sqrt{2\pi A_1N} \exp\left\{\frac{\theta ^2A_1N}{2 }\right\} + \frac{ 4\theta } {\hat{\theta}_0 -2\theta }\exp\left\{-\frac{(\hat{\theta}_0 -2\theta)A_1\hat{\theta}_0N }{2} \right\} +1. 
\end{aligned}
\end{equation}

In particular,
\begin{equation}\label{Basic-lem-0-eq-5}
\limsup_{L\to\infty}\limsup_{N\to\infty}\frac{1}{a^2(N)} \log \mathbb P\left(\frac{\sqrt{N}}{a(N)}\sup_{0\leq t\leq T}\left|\overline{Z}_{t}^{N}-m_{t}\right|\geq  L\right)=-\infty,
\end{equation}
and
\begin{equation}\label{Basic-lem-0-eq-6}
\sup_{N\geq 1}\mathbb E\left(\exp\left\{\frac{A_0\hat{\theta}_0}{4} \sqrt{N}\sup_{0\leq t\leq T}\left|\overline{Z}_{t}^{N}-m_{t}\right|\right\}\right)
<\infty.
\end{equation}
\end{lem}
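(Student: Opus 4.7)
The plan is to combine the exponential supermartingale for a counting process, the Lipschitz domination $\phi(x)\le\phi(0)+\alpha x$ from (A.2), integration by parts against $h$, and a Gronwall estimate. Since the driving Poisson measures $\pi^{i}$ are independent, $M_t:=N\overline{Z}_t^N$ is a counting process with $\mathcal F_t$-intensity $\Lambda_s=N\phi\bigl(\int_0^{s-}h(s-u)d\overline{Z}_u^N\bigr)$; this process and its martingale compensator will be the central objects.

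For Part (1), I would start from the standard positive supermartingale $\exp\bigl(\theta M_t-\int_0^t(e^\theta-1)\Lambda_s\,ds\bigr)$ (with expectation at most one, after a routine localisation). Using $\phi(x)\le\phi(0)+\alpha x$ and Fubini on the double integral $\int_0^T\!\!\int_0^{s-}h(s-u)d\overline{Z}_u^N\,ds$,
\begin{equation*}
\int_0^T(e^\theta-1)\Lambda_s\,ds\le(e^\theta-1)\bigl[N\phi(0)T+\alpha\|h\|_{L^{1}[0,T]}M_T\bigr],
\end{equation*}
so rearranging gives
\begin{equation*}
\mathbb{E}\bigl[\exp\bigl((\theta-(e^\theta-1)\alpha\|h\|_{L^{1}[0,T]})M_T\bigr)\bigr]\le \exp\bigl((e^\theta-1)N\phi(0)T\bigr).
\end{equation*}
Since $1-\alpha\|h\|_{L^{1}[0,T]}>0$, a Taylor expansion at $\theta=0$ combined with the reparametrisation $\theta'=\theta-(e^\theta-1)\alpha\|h\|_{L^{1}[0,T]}$ converts this into \eqref{Basic-lem-0-eq-1} on some interval $[0,\theta_0]$, and \eqref{Basic-lem-0-eq-2} is immediate.

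For Part (2) I decompose $N(\overline{Z}_t^N-m_t)=M_t^{(N)}+D_t^{(N)}$, where $M_t^{(N)}:=M_t-\int_0^t\Lambda_s\,ds$ is the compensated (pure-jump, square integrable) martingale and $D_t^{(N)}$ is the drift difference. The $\alpha$-Lipschitz property of $\phi$ together with integration by parts in $u$ (legitimate because $h$ is differentiable with $|h'|$ locally bounded by (A.1)) yields
\begin{equation*}
|D_t^{(N)}|\le\alpha\bigl(h(0)+\|h'\|_{L^{1}[0,T]}\bigr)\int_0^t\sup_{u\le s}N|\overline{Z}_u^N-m_u|\,du,
\end{equation*}
and Gronwall then produces $A_0 N\sup_{t\le T}|\overline{Z}_t^N-m_t|\le\sup_{t\le T}|M_t^{(N)}|$. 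I would next invoke a Bennett-type maximal inequality: Doob applied to the positive supermartingale $\exp\bigl(\theta M_t^{(N)}-(e^\theta-1-\theta)\langle M^{(N)}\rangle_t\bigr)$ (with $\langle M^{(N)}\rangle_T=\int_0^T\Lambda_s\,ds\le V^N:=N\phi(0)T+\alpha\|h\|_{L^{1}[0,T]}M_T$) delivers for every $v>0$
\begin{equation*}
\mathbb{P}\bigl(\sup_{t\le T}|M_t^{(N)}|\ge r,\ V^N\le v\bigr)\le 2\exp\bigl(-\theta r+(e^\theta-1-\theta)v\bigr),
\end{equation*}
which optimises (Bennett, $\theta=\log(1+r/v)$) to the Gaussian bound $2\exp(-r^2/(2v))$ for $r\le v$ and an exponential bound $2\exp(-cr)$ for $r>v$. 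Picking $v$ of order $N$ and controlling $\mathbb{P}(V^N>v)$ via Part (1) then produces \eqref{Basic-lem-0-eq-3} with the stated constants $A_0,A_1$.

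The remaining estimates follow by routine integration of this tail. From $\mathbb{E}[e^{\theta Y}]=1+\int_0^\infty\theta e^{\theta r}\mathbb{P}(Y\ge r)\,dr$ with $Y=A_0 N\sup_t|\overline{Z}_t^N-m_t|$, splitting at the crossover $r^*=A_1 A_0^2 N\hat\theta_0$ turns the Gaussian tail on $[0,r^*]$ into the first summand of \eqref{Basic-lem-0-eq-4} via the standard identity $\int_0^\infty \theta e^{\theta r-r^2/(2A_1 N)}dr\le\theta\sqrt{2\pi A_1 N}\exp(\theta^2 A_1 N/2)$, and the exponential tail on $[r^*,\infty)$ into the second summand, the latter being convergent exactly when $\theta<\hat\theta_0/2$, hence the restriction $\theta\le\hat\theta_0/4$. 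For \eqref{Basic-lem-0-eq-5}, substituting $r=A_0\sqrt N\,a(N)L$ into \eqref{Basic-lem-0-eq-3} lands in the Gaussian regime for $N$ large (since $a(N)/\sqrt N\to 0$), giving an upper bound $2\exp(-A_0^2 L^2/(2A_1))$ whose scaled logarithm tends to $-\infty$ as $L\to\infty$; \eqref{Basic-lem-0-eq-6} is a direct specialisation of \eqref{Basic-lem-0-eq-4}. The main obstacle is that $\langle M^{(N)}\rangle_T$ is itself an $O(N)$ random variable, so Bennett cannot be invoked with a deterministic variance proxy: the truncation $\{V^N\le v\}$ coupled with the sharp exponential tail on its complement from Part (1) is what couples the two regimes, and tracking constants so that the Gaussian variance comes out to $A_1 N$, the exponential rate to $\hat\theta_0/(2A_0)$, and the two match at $r^*=A_1A_0^2N\hat\theta_0$ is the delicate bookkeeping step.
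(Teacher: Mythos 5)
Your Part (1) follows the paper's argument almost verbatim: same supermartingale $\exp(\theta M_t - (e^\theta-1)\langle M\rangle_t)$ (you phrase it via the intensity $\Lambda_s$ rather than the compensated martingale, but these are the same object), same domination $\phi(x)\le\phi(0)+\alpha x$, same Fubini/integration-by-parts bound on the bracket, same reparametrisation around $\theta=0$. The Gronwall reduction $A_0 N\sup_t|\overline{Z}^N_t-m_t|\le\sup_t|M^{(N)}_t|$ at the start of Part (2) also matches the paper exactly, and so does the back-end integration of the tail probability to produce \eqref{Basic-lem-0-eq-4}--\eqref{Basic-lem-0-eq-6}.

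The one genuine divergence is the middle step of Part (2): how to pass from a random quadratic variation to a usable maximal tail bound. You propose a \emph{conditional Bennett} inequality: truncate on $\{V^N\le v\}$, apply Doob to the exponential supermartingale on that event, and add the truncation error $\mathbb P(V^N>v)$ controlled by Part (1). The paper instead uses a Cauchy--Schwarz \emph{decoupling}: it writes
\[
\mathbb E\bigl[e^{\theta M_T^N}\bigr]
=\mathbb E\Bigl[e^{\theta M_T^N-\tfrac12 g(2\theta)\langle M^N\rangle_T}\,e^{\tfrac12 g(2\theta)\langle M^N\rangle_T}\Bigr]
\le\Bigl(\mathbb E\bigl[\mathcal E_T^{M^N,2\theta}\bigr]\Bigr)^{1/2}\Bigl(\mathbb E\bigl[e^{g(2\theta)\langle M^N\rangle_T}\bigr]\Bigr)^{1/2},
\]
observes the first factor is at most $1$ by the supermartingale property, bounds $g(2\theta)\le 3\theta^2$ for small $\theta$, and then invokes Part (1) to control the second factor through $\langle M^N\rangle_T\le N(\phi(0)T+\alpha\|h\|_{L^1[0,T]}\overline Z^N_T)$. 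This produces the \emph{unconditional} sub-Gaussian moment bound $\mathbb E[e^{\theta(\pm M_T^N)}]\le e^{A_2\theta^2 N/2}$ for $\theta\in(0,\hat\theta_0)$ in one stroke, from which the piecewise Gaussian/exponential form \eqref{Basic-lem-0-eq-3} with exactly the constants $A_0,A_1$ is the straightforward Markov--Chernoff bound restricted to $\theta\le\hat\theta_0$. Your truncation route is legitimate in spirit, but it yields a sum of two tails ($2e^{-vh(r/v)}$ plus $\mathbb P(V^N>v)$), not the clean dichotomy \eqref{Basic-lem-0-eq-3}, and it is not obvious that after balancing $v$ and massaging the Bennett functional you recover those specific constants; you flag this as ``delicate bookkeeping'' without carrying it out. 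So there is a genuine, though local, gap: the decoupling trick (Cauchy--Schwarz against the supermartingale) is the missing ingredient that makes the constants fall out automatically and avoids any truncation error term altogether.
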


\begin{proof} (1).  It is obvious that
\begin{align*}
M_{t}^{N} &:=N\left(\overline{Z}_{t}^{N}- \int_{0}^{t}\phi\left(\int_{0}^{s}h(s-u)d\overline{Z}_{u}^{N}\right)ds\right)
\\
&=\sum_{i=1}^N\int_{0}^{t}\int_{0}^{\infty}I_{\{z\leq\phi(\int_{0}^{s-}h(s-u)d\overline{Z}_{u}^{N})\}}
(\pi^{i}(ds dz)- dsdz)
\end{align*}
is a martingale with the predictable quadratic variation:
\begin{equation}\label{N-dim-matingale-variation}
\left\langle M^{N}\right\rangle_{t}
=N\int_{0}^{t}\phi\left(\int_{0}^{s}h(s-u)d\overline{Z}_{u}^{N}\right)ds\leq N\left(\phi(0)t+\alpha\| h\|_{L^{1}[0,T]}  \overline{Z}_{t}^{N}\right).
\end{equation}
Since  $\sum _{i=1}^N \pi^{i}(ds dz)$ is a Poisson point process,  we have that  the jumps  of $M^N$  satisfy:   
$$
\left| \Delta M^{N}\right|\leq 1. 
$$  
where $\Delta M_t^{N}:=M_t^N-M_{t-}^N $. By Lemma 26.19 in Kallenberg \cite{Kallenberg2002Book} (or Proposition~2 in  \cite{ShorackWellnerBook1986} Appendix B),  for any $\theta\geq 0$,
$$
\mathcal E_t^{M^N,\theta}:=\exp\left\{\theta  N\overline{Z}_{t}^{N}
- (e^{\theta}-1)\left\<M^N\right\>_t\right\}=\exp\left\{\theta M_t^N - g(\theta)\left\<M^N\right\>_t\right\}
$$
is a positive supermartingale, where 
\begin{equation}\label{defn:g}
g(\theta):= e^\theta-\theta-1,\qquad\text{for $\theta\geq 0$}.
\end{equation}  
Thus,
$$
\begin{aligned}
1&\geq\mathbb{E}\left(\mathcal E_T^{M^N,\theta}\right)\geq\mathbb{E}\left(e^{\theta N \overline{Z}_{T}^{N}
-(e^{\theta}-1)N\phi(0)T
-(e^{\theta}-1)N\alpha\| h\|_{L^{1}[0,T]}  \overline{Z}_{T}^{N}}\right).
\end{aligned}
$$
Since we assumed that $\alpha\| h\|_{L^{1}[0,T]}<1$, 
 we can choose   $\theta_0>0$ such that  $\theta-(e^{\theta}-1)\alpha\| h\|_{L^{1}[0,T]}\geq 0$ for any $\theta\in [0,\theta_0]$.
It follows that for any $\theta\in [0,\theta_0]$,  $N\geq 1$,
$$
\mathbb{E}\left(e^{(\theta-(e^{\theta}-1)\alpha\| h\|_{L^{1}[0,T]})N\overline{Z}_{T}^{N}}\right)
\leq e^{(e^{\theta}-1)N\phi(0)T}.
$$
Note that $\lim_{\theta \to 0+}\frac{\theta-(e^{\theta}-1) \alpha\| h\|_{L^{1}[0,T]}}{\theta}=1-\alpha\| h\|_{L^{1}[0,T]}$ and  $\lim_{\theta \to 0+}\frac{e^{\theta}-1}{\theta}=1$. 
Now, by the H\"older inequality,  we obtain that
there exists $\theta_0>0$ such that  for any $\theta\in [0,\theta_0]$,  and for all  $N\geq 1$,
\begin{equation*}
\mathbb{E}\left(e^{\theta N\overline{Z}_{T}^{N}}\right)
\leq e^{2 N\theta\phi(0)T/(1-\alpha\| h\|_{L^{1}[0,T]})}.
\end{equation*}
In particular, for any $N\geq 1$, $\theta_{0}/N\in[0,\theta_{0}]$ such that
\begin{equation*}
\mathbb{E}\left(e^{\theta_0\overline{Z}_{T}^{N}}\right)
=\mathbb{E}\left(e^{(\theta_0/N)N\overline{Z}_{T}^{N}}\right)
\leq e^{2 N(\theta_{0}/N)\phi(0)T/(1-\alpha\| h\|_{L^{1}[0,T]})}
=e^{2\theta_{0}\phi(0)T/(1-\alpha\| h\|_{L^{1}[0,T]})},
\end{equation*}
which implies that $\sup_{N\geq 1}\mathbb{E}\left(e^{\theta_{0}\overline{Z}_{T}^{N}}\right)<\infty$.

(2).  Note that
\begin{equation}\label{Basic-lem-0-eq-8}
\overline{Z}_{t}^{N}-m_{t}
=\frac{1}{N}M_{t}^{N}+\int_{0}^{t}\phi\left(\int_{0}^{s}h(s-u)d\overline{Z}_{u}^{N}\right)ds
-\int_{0}^{t}\phi\left(\int_{0}^{s}h(s-u)dm_{u}\right)ds.
\end{equation}
By the assumptions and integration by parts formula,
$$
\begin{aligned}
&\left|  \phi\left(\int_0^{t-} h(t-s)d \overline{Z}^{N}_s\right)-\phi\left(\int_0^{t} h(t-s)d m_s\right) \right|
\\
&\leq\|\phi\|_{lip}
\left|\int_0^{t-} h(t-s)d \overline{Z}^{N}_s
-\int_0^{t} h(t-s)d m_s\right|
\\
&=\|\phi\|_{lip}\left|h(0)\overline{Z}_{t-}^{N}+\int_{0}^{t}h'(t-s)\overline{Z}_{s}^{N}ds
-h(0)m_{t}-\int_{0}^{t}h'(t-s)m_{s}ds\right|
\\
&\leq\|\phi\|_{lip}\left(h(0)+\| h'\|_{L^{1}[0,T]}\right)\sup_{0\leq s\leq t}\left|\overline{Z}_{s}^{N}-m_{s}\right|.
\end{aligned}
$$
Thus
\begin{equation*}
\sup_{0\leq s\leq t}\left|\overline{Z}_{s}^{N}-m_{s}\right|
\leq
\frac{1}{N}\sup_{0\leq s\leq T}\left|M_{s}^{N}\right|
+\|\phi\|_{lip}\left(h(0)+\| h'\|_{L^{1}[0,T]}\right)
\int_{0}^{t}\sup_{0\leq u\leq s}\left|\overline{Z}_{u}^{N}-m_{u}\right|ds.
\end{equation*}
By the Gronwall inequality, we get
\begin{equation}\label{Basic-lem-0-eq-9}
\sqrt{N}\sup_{0\leq t\leq T}\left|\overline{Z}_{t}^{N}-m_{t}\right|
\leq
\frac{1}{\sqrt{N}}\sup_{0\leq t\leq T}\left|M_{t}^{N}\right|
e^{\|\phi\|_{lip}\left(h(0)+\| h'\|_{L^{1}[0,T]}\right)T}.
\end{equation}
Then by \eqref{Basic-lem-0-eq-1}, 
there exists some $\hat{\theta}_{0}>0$ such that for any $\theta\leq\hat{\theta}_{0}$,
\begin{equation}\label{ineq:mid:step}
\mathbb E\left( \exp\left\{\theta N \overline{Z}_{T}^{N}\right\}\right)\leq e^{2N\theta  \phi(0)T/(1-\alpha\| h\|_{L^{1}[0,T]})}.
\end{equation}
Moreover we take $\hat{\theta}_{0}>0$ to be sufficiently small such that
\begin{equation}\label{g:inequality}
g(2\theta)=e^{2\theta}-2\theta-1\leq 3\theta^{2},\qquad\text{for any $0\leq\theta\leq\hat{\theta}_{0}$},
\end{equation}
where $g$ was first defined in \eqref{defn:g}.
Since  $\mathbb{E}\left(\mathcal E_T^{\pm M^N,\theta}\right)\leq 1$ for any $\theta\geq 0$, we have that for any $\theta\in (0,\hat\theta_0)$,
\begin{align}
&\mathbb E\left( \exp\left\{\theta  \left(\pm M_{T}^{N}\right)\right\}\right)\nonumber\\
&=
\mathbb E\left( \exp\left\{\theta \left(\pm  M_{T}^{N}\right)- \frac{1}{2}g(2\theta )\left\<M^N\right\>_T+\frac{1}{2}g(2\theta )\left\<M^N\right\>_T \right\}\right)\nonumber \\
&\leq 
\left(\mathbb E\left( \mathcal E_T^{\pm M^N,2\theta }\right)\right)^{1/2} \left(\mathbb E\left( \exp\left\{g(2\theta )\left\<M^N\right\>_T \right\}\right)\right)^{1/2}\label{ineq:first}  \\
&\leq   \left(\mathbb E\left( \exp\left\{g(2\theta ) N\left(\phi(0)T+\alpha\| h\|_{L^{1}[0,T]}  \overline{Z}_{T}^{N}\right)\right\}\right)\right)^{1/2}\label{ineq:second}\\
&\leq  \left(\mathbb E\left( \exp\left\{3\theta^2 N \left(\phi(0)T+\alpha\| h\|_{L^{1}[0,T]}  \overline{Z}_{T}^{N}\right)\right\}\right)\right)^{1/2}\label{ineq:third} \\
&\leq  e^{ 3\theta^2 N \phi(0)T(1+ 1/(1-\alpha\| h\|_{L^{1}[0,T]}))}\label{ineq:fourth},
\end{align}
where we applied Cauchy-Schwarz inequality to obtain \eqref{ineq:first}, and we applied \eqref{N-dim-matingale-variation}
together with $\mathbb{E}\left(\mathcal E_T^{\pm M^N,2\theta}\right)\leq 1$ to obtain \eqref{ineq:second}.
Moreover, the inequality \eqref{ineq:third} was due to \eqref{g:inequality}
and finally the inequality \eqref{ineq:fourth} was due to $\alpha\Vert h\Vert_{L^{1}[0,T]}<1$ and the inequality \eqref{ineq:mid:step}.

By the maximal inequality for martingales, we have that  for any $r>0$,  $\theta\in (0,\hat\theta_0)$,
$$ 
\begin{aligned}
\mathbb P\left(\sup_{0\leq t\leq T}\left|M_{t}^{N}\right|\geq   r\right)
\leq &
\mathbb E\left( \exp\left\{\theta   M_{T}^{N}\right\}\right) e^{-\theta  r}+\mathbb E\left( \exp\left\{-\theta  M_{T}^{N}\right\}\right) e^{-\theta  r }\\
\leq & 2 e^{-\theta  r  + 3\theta^2 N \phi(0)T(1+ 1/(1-\alpha\| h\|_{L^{1}[0,T]}))}.
\end{aligned}
$$ 
Therefore
$$ 
\begin{aligned}
\mathbb P\left(\sup_{0\leq t\leq T}\left|M_{t}^{N}\right|\geq   r\right)
&\leq  2  e^{-\sup_{0\leq \theta\leq \hat{\theta}_0}\{\theta  r  - \theta^2 A_2N/2)\}}
\leq 
\begin{cases}
2\exp\left\{-\frac{r^2}{2 A_2N }\right\} & \text{if  $r \leq A_2N\hat{\theta}_0$},
\\
2\exp\left\{-\frac{\hat{\theta}_0 r}{2}\right\} & \text{if  $r> A_2N\hat{\theta}_0$},
\end{cases}
\end{aligned}
$$
where $A_2:=6  \phi(0)T(1+ 1/(1-\alpha\| h\|_{L^{1}[0,T]}))$, 
and thus \eqref{Basic-lem-0-eq-3} is valid.
In particular,  \eqref{Basic-lem-0-eq-5} holds.

Furthermore, for any  $\theta\in (0,\hat\theta_0/4)$,
$$ 
\begin{aligned}
&\mathbb E\left(\exp\left\{ \theta \sup_{0\leq t\leq T}\left|M_{t}^{N}\right|\right\}\right)-1\\
=& \int_{0}^ {\infty} \mathbb P\left( \sup_{0\leq t\leq T}\left|M_{t}^{N}\right| \geq x\right) \theta e^{\theta x}dx \\
\leq &  \int_{0}^ {A_2\hat{\theta}_0N} 2\exp\left\{-\frac{x^2}{2A_2N}\right\}\theta e^{\theta x}dx +  \int_{ A_2\hat{\theta}_0 N} ^\infty 2\exp\left\{-\frac{\hat{\theta}_0 x}{2}\right\} \theta e^{\theta x}dx \\ 
\leq &  \exp\left\{\frac{\theta ^2A_2N}{2}\right\}  \int_{0}^ {\infty} 2\theta \exp\left\{-\frac{(x-A_2\theta N)^2}{2A_2N}\right\}dx
+ \frac{ 4\theta } {\hat{\theta}_0 -2\theta }\exp\left\{-\frac{(\hat{\theta}_0 -2\theta)A_2\hat{\theta}_0 N}{2}  \right\}  \\ 
\leq &  2\theta  \sqrt{2\pi A_2N} \exp\left\{\frac{\theta ^2A_2N}{2 }\right\} + \frac{ 4\theta } {\hat{\theta}_0 -2\theta }\exp\left\{-\frac{(\hat{\theta}_0 -2\theta)A_2\hat{\theta}_0N }{2} \right\}.  
\end{aligned}
$$  
That is,  for any  $\theta\in (0,\hat\theta_0/4)$,
\begin{equation}\label{Basic-lem-0-eq-10}
\begin{aligned}
&\mathbb E\left(\exp\left\{ \theta \sup_{0\leq t\leq T}\left|M_{t}^{N}\right|\right\}\right)\\
\leq &2\theta  \sqrt{2\pi A_2N} \exp\left\{\frac{\theta ^2A_2N}{2 }\right\} + \frac{ 4\theta } {\hat{\theta}_0 -2\theta }\exp\left\{-\frac{(\hat{\theta}_0 -2\theta)A_2\hat{\theta}_0N }{2} \right\}+1.
\end{aligned}
\end{equation}
Thus \eqref{Basic-lem-0-eq-4} is valid.
Specifically, we take $\theta=\hat\theta_0/(4\sqrt{N})$ in   \eqref{Basic-lem-0-eq-4}  to obtain \eqref{Basic-lem-0-eq-6}.
This completes the proof.
\end{proof}

Next, in the following technical lemma, we provide estimates for exponential moments
for $\sup_{0\leq t\leq T}\left|\left\<\widehat{L}^N_t,\varphi\right\>\right|$
for any test function $\varphi\in C^{lip}(\mathbb N)$, where $\widehat{L}^N_t$ is the centered mean-field density defined in \eqref{central-mu-clt-def-1}.

\begin{lem}\label{Basic-lem-1}
Suppose (A.1), (A.2)  and   (A.3) hold.  
Then there exist positive constants $\widetilde{C}_0,  \widetilde{C}_1,  \widetilde{C}_2, \widetilde{\theta}_0$ such that for any $\theta\in [0,\widetilde{\theta}_0]$,  $N\geq 1$, $\varphi\in C^{lip}(\mathbb N)$,
\begin{equation}\label{Basic-lem-1-eq-1}
\mathbb{E}\left(\exp\left\{ \theta  \sqrt{N}\sup_{0\leq t\leq T}\left|\left\<\widehat{L}^N_t,\varphi\right\>\right|\right\}\right)\leq   \widetilde{C}_0 (\|\varphi\|_{lip} \theta \sqrt{N}+1)e^{ \widetilde{C}_1\|\varphi\|_{lip}^2 \theta^2 N} + \widetilde{C}_2.
\end{equation}

In particular,
\begin{equation}\label{Basic-lem-1-eq-2}
\limsup_{L\to\infty}\limsup_{N\to\infty}\frac{1}{a^2(N)} \log \mathbb P\left(\frac{1}{a(N)}\sup_{0\leq t\leq T}\left|\left\<\widehat{L}^N_t,\varphi\right\>\right|\geq  L\right)=-\infty,
\end{equation}
and
\begin{equation}\label{Basic-lem-1-eq-3}
\sup_{N\geq 1}\mathbb E\left(\exp\left\{\frac{\widetilde{\theta}_0}{2} \sup_{0\leq t\leq T}\left|\left\<\widehat{L}^N_t,\varphi\right\>\right|\right\}\right)
<\infty.
\end{equation}
\end{lem}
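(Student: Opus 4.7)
The plan is to decompose $V_t := \sqrt N\,\<\widehat L^N_t,\varphi\> = \sum_{i=1}^N[\varphi(Z^{N,i}_t)-\mathcal L_t\varphi]$ analogously to the approach for the mean process in the proof of Lemma~\ref{Basic-lem-0}. Applying It\^o's formula to each $\varphi(Z^{N,i}_t)$ via \eqref{N-dim-Hawkes-process-eq} and using the Kolmogorov forward equation $d\mathcal L_t\varphi/dt = \widetilde\lambda_t\, \mathcal L_t\nabla\varphi$ for the mean-field inhomogeneous Poisson process $\widetilde Z_t$, with $\widetilde\lambda_t := \phi(\int_0^t h(t-u)dm_u)$, yields
\[
V_t = \mathcal M_t + \int_0^t \lambda^N_s V^{\nabla\varphi}_s\,ds + N\int_0^t(\lambda^N_s-\widetilde\lambda_s)\mathcal L_s\nabla\varphi\,ds,
\]
where $\lambda^N_s = \phi(\int_0^{s-}h(s-u)d\overline Z^N_u)$, $V^{\nabla\varphi}_s := \sum_i[\nabla\varphi(Z^{N,i}_s)-\mathcal L_s\nabla\varphi]$, and $\mathcal M_t$ is a pure-jump martingale satisfying $|\Delta\mathcal M_t|\le\|\varphi\|_{lip}$ and $\<\mathcal M\>_T \le \|\varphi\|_{lip}^2 N(\phi(0)T + \alpha\|h\|_{L^1[0,T]}\overline Z^N_T)$. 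The task reduces to obtaining a sub-Gaussian-type exponential moment bound for each of the three pieces.

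The martingale $\mathcal M_t$ is handled as in the proof of Lemma~\ref{Basic-lem-0}: Kallenberg's exponential supermartingale $\exp(\theta\mathcal M_t - g(\theta c)\<\mathcal M\>_t/c^2)$ with $c := \|\varphi\|_{lip}$, together with Cauchy--Schwarz, the bound $g(2c\theta)\leq C c^2\theta^2$ for small $c\theta$, and \eqref{Basic-lem-0-eq-1} yield $\mathbb E[e^{\theta\mathcal M_T}] \le \exp(C'\|\varphi\|_{lip}^2\theta^2 N)$. Passing to $\sup_t|\mathcal M_t|$ via Doob's maximal inequality applied to $e^{\pm\theta\mathcal M_t}$ and integrating the resulting two-regime tail as in the derivation of \eqref{Basic-lem-0-eq-10} produces a bound of the form required by \eqref{Basic-lem-1-eq-1}. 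The drift $N\int_0^t(\lambda^N_s-\widetilde\lambda_s)\mathcal L_s\nabla\varphi\,ds$ is bounded, by the Lipschitz property of $\phi$ and integration by parts (cf.\ the derivation of \eqref{Basic-lem-0-eq-9}), by
\[
N\|\varphi\|_{lip}\|\phi\|_{lip}(h(0)+\|h'\|_{L^1[0,T]})T\sup_{u\le T}|\overline Z^N_u-m_u|,
\]
whose exponential moment is controlled directly by \eqref{Basic-lem-0-eq-4} after rescaling with $\|\varphi\|_{lip}$.

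The principal obstacle is the remaining drift $A^{(1)}_t := \int_0^t \lambda^N_s V^{\nabla\varphi}_s\,ds$: the naive bound $|V^{\nabla\varphi}_s|\le 2N\|\varphi\|_{lip}$ produces only a linear-in-$\theta$ exponential moment, insufficient for the quadratic sub-Gaussian scaling required for \eqref{Basic-lem-1-eq-2}. To recover the quadratic scaling, one iterates the It\^o decomposition on $V^{\nabla^k\varphi}_s$ recursively, and at each level converts the integrated martingale via integration by parts using continuity of $\Lambda^N_t := \int_0^t\lambda^N_u du$:
\[
\int_0^t \lambda^N_s\,\mathcal M^{\nabla^k\varphi}_s\,ds = \Lambda^N_t\,\mathcal M^{\nabla^k\varphi}_t - \int_0^t\Lambda^N_{s-}\,d\mathcal M^{\nabla^k\varphi}_s.
\]
The boundary term is the product of $\Lambda^N_T$ (close to its deterministic limit $m_T$ by \eqref{Basic-lem-0-eq-1}) and $\sup|\mathcal M^{\nabla^k\varphi}|$, which has sub-Gaussian tail since $|\Delta\mathcal M^{\nabla^k\varphi}|\le 2^k\|\varphi\|_{lip}$; the stochastic integral is itself a martingale whose quadratic variation is controlled analogously. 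Iterating produces an absolutely convergent series whose $k$-th contribution carries the factorial weight $(\Lambda^N_T)^k/k!$, absorbing the growth $2^k\|\varphi\|_{lip}$ of $\nabla^k\varphi$, and the $(\lambda^N-\widetilde\lambda)$ differences at each level are again handled via \eqref{Basic-lem-0-eq-4}. Combining the three contributions via H\"older's inequality with exponents $(3,3,3)$ yields \eqref{Basic-lem-1-eq-1}. The specializations then follow: \eqref{Basic-lem-1-eq-3} by setting $\theta = \widetilde\theta_0/(2\sqrt N)$, making the right-hand side $N$-independent; \eqref{Basic-lem-1-eq-2} by exponential Markov with $\theta = a(N)/\sqrt N$ (valid since $a(N)/\sqrt N\to 0$ by \eqref{mdp-speed-seq}), producing a bound whose $a^{-2}(N)$-logarithm tends to $\widetilde C_1\|\varphi\|_{lip}^2-L$, which diverges to $-\infty$ as $L\to\infty$.
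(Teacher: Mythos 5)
Your high-level It\^o decomposition is the same starting point the paper also considers implicitly, but the way you handle the drift $A^{(1)}_t := \int_0^t \lambda^N_s V^{\nabla\varphi}_s\,ds$ diverges from the paper's proof and, as written, contains a genuine gap. The paper never iterates: instead it introduces auxiliary i.i.d.\ inhomogeneous Poisson processes $\widetilde Z^i_t$ (same $\pi^i$, but deterministic intensity $\widetilde\lambda_t=\phi(\int_0^t h(t-u)\,dm_u)$) and splits
$\sqrt N\bigl|\langle\widehat L^N_t,\varphi\rangle\bigr| \le \|\varphi\|_{lip}\sum_i|Z^{N,i}_t-\widetilde Z^i_t| + \bigl|\sum_i(\varphi(\widetilde Z^i_t)-\mathbb E\varphi(\widetilde Z^i_t))\bigr|$.
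The coupling error produces a bounded-jump martingale $Y^{N,1}$ whose quadratic variation is controlled by $\sup_t|\overline Z^N_t-m_t|$, and the second term is decomposed by It\^o into a bounded-jump martingale $\widetilde M^{\varphi,N}$ plus a drift $\sum_i\int_0^t(\nabla\varphi(\widetilde Z^i_s)-\mathbb E\nabla\varphi(\widetilde Z^i_s))\,\widetilde\lambda_s\,ds$ which --- crucially --- is a sum of \emph{independent} bounded centred summands, so Hoeffding's inequality applies directly. That independence is what delivers the $e^{O(\theta^2 N)}$ scaling without any iteration.

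In your scheme this independence is never made available: the $Z^{N,i}$ are mutually dependent, so Hoeffding is unavailable and you attempt to create the quadratic scaling by iterating the It\^o decomposition on $V^{\nabla^k\varphi}$. There are two concrete problems. First, the integration by parts $\int_0^t\lambda^N_s\mathcal M^{\nabla^k\varphi}_s\,ds = \Lambda^N_t\mathcal M^{\nabla^k\varphi}_t - \int_0^t\Lambda^N_{s-}\,d\mathcal M^{\nabla^k\varphi}_s$ produces a local martingale whose jumps $\Lambda^N_{s-}\Delta\mathcal M^{\nabla^k\varphi}_s$ are \emph{not} bounded, since $\Lambda^N_T \le \phi(0)T+\alpha\|h\|_{L^1[0,T]}\overline Z^N_T$ has only exponential-type tails. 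The Kallenberg/supermartingale estimate that underlies every exponential moment bound in Section~\ref{sec:exponential} requires a deterministic jump bound, so it does not apply to these integrals. You would need to localize on $\{\Lambda^N_T\le C\}$ at every level, and it is not shown that the contributions from the complement are negligible at speed $a^2(N)$. Second, ``the $k$-th contribution carries the factorial weight $(\Lambda^N_T)^k/k!$, absorbing the growth $2^k\|\varphi\|_{lip}$'' gives pathwise control $\sum_k (\Lambda^N_T)^k 2^k\|\varphi\|_{lip}/k! \approx \|\varphi\|_{lip}e^{2\Lambda^N_T}$, but $e^{2\Lambda^N_T}$ has no exponential moments (it involves $e^{c\overline Z^N_T}$, and $\overline Z^N_T$ is Poisson-like). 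So the claimed absolute convergence does not upgrade to the required control of $\mathbb E[\exp(\theta\cdot\text{series})]$; you would have to combine tail bounds on individual summands with a carefully chosen H\"older scheme whose weights beat the $2^k/k!$ growth, and none of this is worked out. In short, the iteration is an interesting heuristic, but as presented it does not close; the coupling-to-i.i.d.-Poisson + Hoeffding argument in the paper sidesteps both difficulties.
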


\begin{proof} 
 For every $i\geq1$,  define the following inhomogeneous Poisson process:
\begin{equation}\label{Basic-lem-1-eq-4}
\widetilde{Z}^i_t = \int_0^t\int_0^\infty I_{ \left\{z \leq \phi\left(\int_0^s
h(s-u)d m_u\right)\right\}}\pi^i(ds\,dz),\;\;\text{ }\;\;t\geq 0,
\end{equation}
where $m_u,u\geq 0$ is the unique solution to \eqref{mean-mean-eq}, and $\{\pi^i,i\geq 1\}$ are  independent Poisson point processes. 
Then 
$$
\mathbb E\left(\widetilde{Z}^i_t \right)= \int_0^t\phi\left(\int_0^s
h(s-u)d m_u\right)ds=m_t. 
$$
Therefore, $\left\{\widetilde{Z}^i_t, t\geq0\right\}$, $i\geq 1$, are independent copies of $\widetilde{Z}$,  where $\widetilde{Z}$ is a solution to \eqref{Hawkes-mean-eq}. 

Then for any $\varphi \in C^{lip}(\mathbb N)$, 
$$
\begin{aligned}
\left|\left\<\widehat{L}^N_t,\varphi\right\>\right|= & \frac{1}{\sqrt{N}}\left|\sum_{i=1}^N\left(\varphi\left(Z_t^{N,i}\right)-\mathbb E\left(\varphi\left(\widetilde{Z}_t\right)\right)\right)\right|\\
\leq & \frac{1}{\sqrt{N}}\left|\sum_{i=1}^N\left(\varphi\left(Z_t^{N,i}\right)-\varphi\left(\widetilde{Z}_t^i\right)\right)\right|+ \frac{1}{\sqrt{N}}\left|\sum_{i=1}^N\left(\varphi\left(\widetilde{Z}_t^i\right)-\mathbb E\left(\varphi\left(\widetilde{Z}_t^i\right)\right)\right)\right|\\
\leq & \frac{\|\varphi\|_{lip}}{\sqrt{N}} \sum_{i=1}^N\left|Z_t^{N,i}-\widetilde{Z}_t^i\right|+ \frac{1}{\sqrt{N}}\left|\sum_{i=1}^N\left(\varphi\left(\widetilde{Z}_t^i\right)-\mathbb E\left(\varphi\left(\widetilde{Z}_t^i\right)\right)\right)\right|\\
= & \frac{\|\varphi\|_{lip} }{\sqrt{N}} \left(Y_t^{N,1}+  \sum_{i=1}^N \int_0^t  \int_0^\infty\left| I_{ \left\{z \leq \phi\left(\int_0^s
h(s-u)d m_u\right)\right\}}- I_{\left\{z \leq \phi\left(\int_0^{s-}+ 
h(s-u)d \overline{Z}_u^N\right)\right\}}\right|dzds\right)\\
&+ \frac{1}{\sqrt{N}} \left|Y_t^{N,2}\right|.
\end{aligned}
$$
where   
\begin{equation}\label{Basic-lem-1-eq-5}
Y_t^{N,1}:= \int_0^t  \int_0^\infty \left| I_{ \left\{z \leq \phi\left(\int_0^s
h(s-u)d m_u\right)\right\}}-I_{ \left\{z \leq \phi\left(\int_0^{s-}
h(s-u)d \overline{Z}_u^N\right)\right\}} \right|\sum_{i=1}^N(\pi^i(dsdz)-dsdz),
\end{equation}
and
\begin{equation}\label{Basic-lem-1-eq-6}
Y_t^{N,2}:= \sum_{i=1}^N\left(\varphi(\widetilde{Z}_t^i)-\mathbb E\left(\varphi\left(\widetilde{Z}_t^i\right)\right)\right).
\end{equation}
Note that 
$$
\begin{aligned}
&\frac{\|\varphi\|_{lip}}{\sqrt{N}} \sum_{i=1}^N \int_0^t  \int_0^\infty\left| I_{ \left\{z \leq \phi\left(\int_0^s
h(s-u)d m_u\right)\right\}}- I_{\left\{z \leq \phi\left(\int_0^{s-}+ 
h(s-u)d \overline{Z}_u^N\right)\right\}}\right|dzds\\
&=
 \|\varphi\|_{lip}\sqrt{N} \int_0^t \left| \phi\left(\int_0^s
h(s-u)d m_u\right)- \phi\left(\int_0^sh(s-u)d \overline{Z}_u^N\right)\right| ds\\
&=\|\varphi\|_{lip} \|\phi\|_{lip}\sqrt{N}\int_{0}^{t}\left|h(0)\left(m_{s}-\overline{Z}_{s}^{N}\right)+\int_{0}^{s}h'(s-u)\left(m_{u}-\overline{Z}_{u}^{N}\right)du\right|ds
\\
&\leq  \|\varphi\|_{lip} \|\phi\|_{lip}\left(h(0)+\| h'\|_{L^{1}[0,T]}\right)T
\sqrt{N}\sup_{0\leq u\leq t}\left|\overline{Z}_{u}^{N}-m_{u}\right|.
\\
\end{aligned}
$$
Therefore,
\begin{equation}\label{Basic-lem-1-eq-7}
\begin{aligned}
\left|\left\<\widehat{L}^N_t,\varphi\right\>\right|
&\leq   \frac{\|\varphi\|_{lip} }{\sqrt{N}} \left|Y_t^{N,1}\right|+ \frac{1}{\sqrt{N}} \left|Y_t^{N,2}\right|
\\
&\qquad + \|\varphi\|_{lip} \|\phi\|_{lip}\left(h(0)+\| h'\|_{L^{1}[0,T]}\right)T
\sqrt{N}\sup_{0\leq u\leq t}\left|\overline{Z}_{u}^{N}-m_{u}\right|.
\end{aligned}
\end{equation}

Let us first estimate the exponential moment of $Y_t^{N,1}$.
Note that $Y_t^{N,1}$ is a martingale with $\left| \Delta Y_t^{N,1}\right|\leq 1$, and
$$
\begin{aligned}
\left\<Y^{N,1}\right\>_t
&=N \int_0^t \left| \phi\left(\int_0^s
h(s-u)d m_u\right)- \phi\left(\int_0^sh(s-u)d \overline{Z}_u^N\right)\right| ds
\\
&\leq\|\phi\|_{lip}\left(h(0)+\| h'\|_{L^{1}[0,T]}\right)T
N\sup_{0\leq u\leq t}\left|\overline{Z}_{u}^{N}-m_{u}\right|.
\end{aligned}
$$

By Lemma~26.19 in Kallenberg \cite{Kallenberg2002Book} (or Proposition~2 in  \cite{ShorackWellnerBook1986} Appendix B),  for any $\theta\geq 0$,
$$
\mathcal E_t^{Y^{N,1},\theta}= \exp\left\{\theta Y_t^{N,1} - g(\theta)\left\<Y^{N,1}\right\>_t\right\}
$$
is a positive supermartingale, where $g(\theta)$ is defined in \eqref{defn:g}.  
Note that $g(2\theta)/\theta^2\to 2$ as $\theta\to 0$, by \eqref{Basic-lem-0-eq-4}  and  following the proof of \eqref{Basic-lem-0-eq-10},   there exist positive constants $C_1, C_2, \theta_1$ such that for any $\theta\in [0,\theta_1]$,  $N\geq 1$,
\begin{equation}\label{Basic-lem-1-eq-7}
\begin{aligned}
&\mathbb{E}\left(e^{ \theta \sup_{0\leq t\leq T}\left|Y_t^{N,1}\right|}\right) \leq  C_1 \theta \sqrt{N} e^{C_2 \theta^2 N}+1.
\end{aligned}
\end{equation}

Next, we estimate the exponential moment of $Y_t^{N,2}$.
  For any $\varphi \in C^{lip}(\mathbb N)$, 
  by It\^{o}'s formula,
\begin{equation}\label{Basic-lem-1-eq-9}
\begin{aligned}
& \sum_{i=1}^N\left(\varphi\left(\widetilde{Z}_t^i\right)-\mathbb E\left(\varphi\left(\widetilde{Z}_t^i\right)\right)\right)\\
=&  \sum_{i=1}^N\int_0^{t} \left(\nabla\varphi\left(\widetilde{Z}_s^i\right)-\mathbb E\left(\nabla\varphi\left(\widetilde{Z}_s^i\right)\right)\right) \phi\left(\int_0^{s}
h(s-u)d m_u\right)ds +\widetilde{M}_t^{\varphi,N},
\end{aligned}
\end{equation}
where
\begin{equation}\label{Basic-lem-1-eq-10}
\begin{aligned}
\widetilde{M}_t^{\varphi,N}:= \sum_{i=1}^N\int_0^{t} \int_0^\infty \nabla  \varphi\left(\widetilde{Z}_{s-}^{i}\right)I_{ \left\{z \leq \phi\left(\int_0^{s-}
h(s-u)dm_u\right)\right\}}   ({\pi}^{i}(ds\,dz)-dsdz)
\end{aligned}
\end{equation}
is a martingale with  $\left|\Delta \widetilde{M}_t^{\varphi,N}\right|\leq \|\varphi\|_{lip}$  and the predictable quadratic variation
$\left\<\widetilde{M}^{\varphi,N}\right\>_t$ that satisfies
\begin{equation}\label{MNT-variation}
\begin{aligned}
\left|\left\<\widetilde{M}^{\varphi,N}\right\>_t\right|=&\left|\sum_{i=1}^N\int_0^{t} \left| \nabla  \varphi\left(\widetilde{Z}_{s}^{i}\right)\right|^2 \phi\left(\int_0^{s}
h(s-u)d m_u\right)  ds\right|\\
=&\left|\sum_{i=1}^N\int_0^{t}  \left|\nabla  \varphi\left(\widetilde{Z}_{s}^{i}\right)\right|^2 dm_{s}\right|\leq N \|\varphi\|_{lip}^2m_{t}.
\end{aligned}
\end{equation}
Then for any $\theta\geq 0$,
$$
\mathcal E_t^{\widetilde{M}_t^{\varphi,N},\theta/\|\varphi\|_{lip}}= \exp\left\{\frac{\theta}{\|\varphi\|_{lip}} \widetilde{M}_t^{\varphi,N} - g(\theta/ \|\varphi\|_{lip})\left\<\widetilde{M}^{\varphi,N}\right\>_t\right\}
$$
is a positive  supermartingale.  Thus,   
following again the proof of \eqref{Basic-lem-0-eq-10},  there exist positive constants $C_3, C_4, \theta_2$ such that for any $\theta\in [0,\theta_2]$,  $N\geq 1$,
\begin{equation}\label{Basic-lem-1-eq-11}
\begin{aligned}
&\mathbb{E}\left(e^{ \theta  \sup_{0\leq t\leq T}\left|\widetilde{M}_t^{\varphi,N}\right|}\right) \leq  C_3\|\varphi\|_{lip} \theta \sqrt{N} e^{C_4\|\varphi\|_{lip}^2 \theta^2 N}+1.
\end{aligned}
\end{equation}

Note that
$$
\begin{aligned}
&\sup_{0\leq t\leq T}\left| \sum_{i=1}^N\int_0^t\left(\nabla\varphi\left(\widetilde{Z}_s^i\right)-\mathbb E\left(\nabla\varphi\left(\widetilde{Z}_s^i\right)\right)\right) \phi\left(\int_0^{s}
h(s-u)d m_u\right)ds\right|\\
\leq &\left( \phi(0)+\alpha\|h\|_0^Tm_T  \right)\int_0^T\left| \sum_{i=1}^N\left(\nabla\varphi\left(\widetilde{Z}_s^i\right)-\mathbb E\left(\nabla\varphi\left(\widetilde{Z}_s^i\right)\right)\right) \right|ds,
\end{aligned}
$$
and  
\begin{equation*}
\left|\nabla\varphi\left(\widetilde{Z}_s^i\right)-\mathbb E\left(\nabla\varphi\left(\widetilde{Z}_s^i\right)\right)\right|\leq 2\|\varphi\|_{lip}. 
\end{equation*}
By the Hoeffding's inequality \cite{HoeffdingJASA1963}, we have
\begin{equation}\label{Basic-lem-1-eq-12}
\begin{aligned}
&\mathbb{E}\left(\exp\left\{ \theta  \sup_{0\leq t\leq T}\left| \sum_{i=1}^N\int_0^t\left(\nabla\varphi\left(\widetilde{Z}_s^i\right)-\mathbb E\left(\nabla\varphi\left(\widetilde{Z}_s^i\right)\right)\right) \phi\left(\int_0^{s}
h(s-u)d m_u\right)ds\right|\right\}\right)\\
\leq & \frac{1}{T}\int_0^T\mathbb{E}\left(\exp\left\{ \theta T \left( \phi(0)+\alpha\|h\|_0^Tm_T  \right)\left| \sum_{i=1}^N\left(\nabla\varphi\left(\widetilde{Z}_s^i\right)-\mathbb E\left(\nabla\varphi\left(\widetilde{Z}_s^i\right)\right)\right) \right|\right\}\right)ds\\
\leq &\exp\left\{2\theta^2 N\|\varphi\|_{lip}^2 T^2 \left( \phi(0)+\alpha\|h\|_0^Tm_T\right)^2 \right\}.
\end{aligned}
\end{equation}
 
Combining with \eqref{Basic-lem-0-eq-4}, \eqref{Basic-lem-1-eq-7},  \eqref{Basic-lem-1-eq-11}  and \eqref{Basic-lem-1-eq-12}, 
we conclude that there exist positive constants $\widetilde{C}_0,  \widetilde{C}_1, \widetilde{\theta}_0$ such that for any $\theta\in [0,\widetilde{\theta}_0]$,  $N\geq 1$, $\varphi\in C^{lip}(\mathbb N)$,
$$
\mathbb{E}\left(\exp\left\{ \theta  \sqrt{N}\sup_{0\leq t\leq T}\left|\left\<\widehat{L}^N_t,\varphi\right\>\right|\right\}\right)\leq   \widetilde{C}_0 \left(\|\varphi\|_{lip} \theta \sqrt{N}+1\right)e^{ \widetilde{C}_1\|\varphi\|_{lip}^2 \theta^2 N}.
$$
 In particular,
by applying the Chebyshev inequality,  we obtain \eqref{Basic-lem-1-eq-2}.  Taking $\theta=\widetilde{\theta}_0/(2\sqrt{N})$,   we obtain \eqref{Basic-lem-1-eq-3}.
The proof is complete.
\end{proof}

 \begin{rmk} 
Furthermore,  we can also give some estimates for the error  between  the multivariate nonlinear Hawkes process
$\left(Z^{N,1}_t,\cdots,Z^{N,N}_t\right)$ defined in \eqref{N-dim-Hawkes-process-eq}   and  the  $N$ independent inhomogeneous Poisson processes $\widetilde{Z}^{1}_t,\cdots, \widetilde{Z}^{N}_t$ that are defined in \eqref{Basic-lem-1-eq-4}.
In fact,  for any $1\leq i\leq N$,  we have
$$
\begin{aligned}
&\left|Z_t^{N,i}-\widetilde{Z}^i_t\right|
\\
&=\int_0^t  \int_0^\infty \left| I_{ \left\{z \leq \phi\left(\int_0^s
h(s-u)d m_u\right)\right\}}- I_{ \left\{z \leq \phi\left(\int_0^{s-}
h(s-u)d \overline{Z}_u^N\right)\right\}}\right|   (\pi^i(dsdz)-dsdz)
\\
&\qquad\qquad+\int_0^t  \left| \phi\left(\int_0^s
h(s-u)d m_u\right)- \phi\left(\int_0^s
h(s-u)d \overline{Z}_u^N\right) \right|  ds
\\
&\leq\int_0^t  \int_0^\infty \left| I_{ \left\{z \leq \phi\left(\int_0^s
h(s-u)d m_u\right)\right\}}- I_{ \left\{z \leq \phi\left(\int_0^{s-}
h(s-u)d \overline{Z}_u^N\right)\right\}}\right|   (\pi^i(dsdz)-dsdz)
\\
&\qquad\qquad+\|\phi\|_{lip}\int_0^t  \left|\int_0^s
h(s-u)d \left(m_u-\overline{Z}_u^N\right) \right|  ds
\\
&=\int_0^t  \int_0^\infty \left| I_{ \left\{z \leq \phi\left(\int_0^s
h(s-u)d m_u\right)\right\}}- I_{ \left\{z \leq \phi\left(\int_0^{s-}
h(s-u)d \overline{Z}_u^N\right)\right\}}\right|   (\pi^i(dsdz)-dsdz)
\\
&\qquad\qquad+\|\phi\|_{lip}\int_0^t  \left|
h(0)\left(m_s-\overline{Z}_s^N\right)+\int_{0}^{s}h'(s-u)\left(m_u-\overline{Z}_u^N\right)du\right|  ds
\\
&\leq \int_0^t  \int_0^\infty \left| I_{ \left\{z \leq \phi\left(\int_0^s
h(s-u)d m_u\right)\right\}}- I_{ \left\{z \leq \phi\left(\int_0^{s-}
h(s-u)d \overline{Z}_u^N\right)\right\}}\right|   (\pi^i(dsdz)-dsdz)
\\
&\qquad\qquad+ \|\phi\|_{lip}\left(h(0)+\| h'\|_{L^{1}[0,T]}\right)T
\sup_{0\leq u\leq t}\left|\overline{Z}_{u}^{N}-m_{u}\right|,
\end{aligned}
$$
where $\overline{Z}_{u}^{N}$ is the mean process defined in \eqref{Hawkes-mean-process-eq-0}.
Thus,  under (A.1), (A.2)  and   (A.3),  by Lemma \ref{Basic-lem-0} and  using the same  argument as  \eqref{Basic-lem-1-eq-7},  
 there exist positive constants $\widetilde{C}_0,  \widetilde{C}_1,  \widetilde{C}_2, \widetilde{\theta}_0$ such that for any $\theta\in [0,\widetilde{\theta}_0]$,  $N\geq 1$,  
\begin{equation}\label{rmk-Basic-lem-1-eq-1}
\max_{1\leq i\leq N}\mathbb{E}\left(\exp\left\{ \theta  N\sup_{0\leq t\leq T}\left|Z_t^{N,i}-\widetilde{Z}^i_t\right|\right\}\right)\leq   \widetilde{C}_0  \theta \sqrt{N} e^{ \widetilde{C}_1 \theta^2 N}+ \widetilde{C}_2.
\end{equation}

In particular,
\begin{equation}\label{rmk-Basic-lem-1-eq-2}
\limsup_{L\to\infty}\limsup_{N\to\infty}\frac{1}{a^2(N)} \log \max_{1\leq i\leq N}\mathbb P\left(\frac{\sqrt{N}}{a(N)}\sup_{0\leq t\leq T}\left|Z_t^{N,i}-\widetilde{Z}^i_t\right|\geq  L\right)=-\infty,
\end{equation}
and
\begin{equation}\label{rmk-Basic-lem-1-eq-3}
\sup_{N\geq 1}\max_{1\leq i\leq N}\mathbb E\left(\exp\left\{\frac{\widetilde{\theta}_0}{2} \sqrt{N}\sup_{0\leq t\leq T}\left|Z_t^{N,i}-\widetilde{Z}^i_t\right|\right\}\right)
<\infty.
\end{equation}
 \end{rmk}

\section{Fluctuations for the Mean-Field Limit}\label{CLTMeanFieldsSection}

In this section, we prove Theorem~\ref{CLT-thm}, which is a fluctuation theorem for the mean-field limit. 
First,  for any $\varphi \in C^{1,lip}([0,T]\times\mathbb N)$, by It\^{o}'s formula,
\begin{equation}\label{Martingale-eq-0}
\begin{aligned}
&\left\<\widehat{L}^N_T,\varphi(T)\right\>\\
&= \frac{1}{\sqrt{N}}\sum_{i=1}^N\int_0^{T} \partial_t\left(\varphi\left(t,Z_t^{N,i}\right)-\mathbb E\left(\varphi\left(t,\widetilde{Z}_t\right)\right)\right)dt\\
&\qquad+\frac{1}{\sqrt{N}}\sum_{i=1}^N\int_0^{T} \int_0^\infty \nabla  \varphi\left(t,Z_{t-}^{N,i}\right)I_{ \left\{z \leq \phi\left(\int_0^{t-}
h(t-s)d \overline{Z}^{N}_s\right)\right\}} {\pi}^{i}(dt\,dz)\\
&\qquad\qquad-\sqrt{N}\int_0^{T} \mathbb E \left(\nabla  \varphi\left(t,\widetilde{Z}_{t}\right)\right)\phi\left(\int_0^{t} h(t-s)d m_s\right) dt\\
&= \int_0^{T}\left\<\widehat{L}_{t}^N,\partial_t \varphi(t)\right\>   dt+ \int_0^{T}\left\<\widehat{L}_t^N, \nabla \varphi(t)\right\>  \phi\left(\int_0^{t} h(t-s)d m_s\right) dt \\
&\qquad +\int_0^{T}\left\<{L}_t^N, \nabla \varphi(t)\right\>   \sqrt{N} \left( \phi\left(\int_0^{t-} h(t-u)d \overline{Z}^{N}_u\right)-\phi\left(\int_0^{t} h(t-u)d m_u\right)\right) dt+M_T^{\varphi, N},
\end{aligned}
\end{equation}
where
\begin{equation}\label{Martingale-eq-1}
\begin{aligned}
M_t^{\varphi,N}:=\frac{1}{\sqrt{N}}\sum_{i=1}^N\int_0^{t} \int_0^\infty \nabla  \varphi\left(s,Z_{s-}^{N,i}\right)I_{ \left\{z \leq \phi\left(\int_0^{s-}
h(s-u)d \overline{Z}^{N}_u\right)\right\}}   ({\pi}^{i}(ds\,dz)-dsdz), ~~t\in[0,T],
\end{aligned}
\end{equation}
is a martingale with the predictable quadratic variation
\begin{equation}\label{Martingale-eq-2}
\begin{aligned}
\left\<M^{\varphi,N}\right\>_t:=&\frac{1}{N}\sum_{i=1}^N\int_0^{t}  \left(\nabla  \varphi\left(s,Z_s^{N,i}\right)\right)^2\phi\left(\int_0^{s}h(s-u)d \overline{Z}^{N}_u\right) ds\,.
\end{aligned}
\end{equation}

Therefore,  in order to show Theorem~\ref{CLT-thm},  we need to study convergence of the terms in equation~\eqref{Martingale-eq-0} which will be completed by the establishing a sequence of lemmas that are stated as follows.
First, in the following lemma, we show that the martingale term $M_t^{\varphi,N}$ in \eqref{Martingale-eq-0}, which is defined in \eqref{Martingale-eq-1}, converges
to a Gaussian process.

\begin{lem}\label{CLT-thm-lem-1}
Suppose (A.1), (A.2)  and   (A.3) hold.  Then as $N\rightarrow\infty$,
$$
\left\<M^{\varphi,N}\right\>_T{\to}  \int_0^{T} \left\<\mathcal L_t,( \nabla \varphi(t))^2\right\>  \phi\left(\int_0^{t}
h(t-s)d m_s\right)  dt ,
$$
and
$\left\{M_t^{\varphi,N},t\in [0,T]\right\}$ converges weakly to the Gaussian process:
$$
\int_0^T \sqrt{  \phi\left(\int_0^{t}
h(t-u)d m_u\right) } \left\<\sqrt{\mathcal L_t} \nabla \varphi(t), dB_t\right\>.
$$
\end{lem}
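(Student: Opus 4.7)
The plan is to prove the two claims by first establishing convergence in probability of the predictable quadratic variation and then invoking a functional martingale central limit theorem, identifying the limit via its quadratic variation.

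\textbf{Step 1: Convergence of $\langle M^{\varphi,N}\rangle_T$.} I would first rewrite
\[
\left\langle M^{\varphi,N}\right\rangle_t
= \int_0^t \left\langle L_s^N,\, (\nabla\varphi(s))^2\right\rangle
\phi\!\left(\int_0^{s-}h(s-u)\,d\overline{Z}_u^N\right) ds.
\]
Because $\varphi\in C^{1,lip}([0,T]\times\mathbb N)$, the map $s\mapsto(\nabla\varphi(s))^2$ is a bounded, uniformly Lipschitz function on $\mathbb N$ and therefore a legitimate test function in $C^{lip}(\mathbb N)$. The mean-field convergence $L^N\to\mathcal L$ in $D([0,T],\mathbb H^{-1}(\mathbb N))$ of Delattre--Fournier--Hoffmann gives $\langle L_s^N,(\nabla\varphi(s))^2\rangle\to \langle\mathcal L_s,(\nabla\varphi(s))^2\rangle$ uniformly in $s$ in probability. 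Combined with the Lipschitz continuity of $\phi$ and the uniform convergence $\overline{Z}^N\to m$ provided by Lemma~\ref{Basic-lem-0} (in particular by \eqref{Basic-lem-0-eq-5}), the integrand converges uniformly on $[0,T]$ in probability. The exponential moment bounds of Lemma~\ref{Basic-lem-0} supply the uniform integrability needed to pass to the limit in the $ds$-integral, yielding
\[
\left\langle M^{\varphi,N}\right\rangle_T \xrightarrow[N\to\infty]{\mathbb P} \int_0^T \left\langle \mathcal L_s,(\nabla\varphi(s))^2\right\rangle\phi\!\left(\int_0^s h(s-u)\,dm_u\right) ds.
\]

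\textbf{Step 2: Functional martingale CLT.} The jumps of $M^{\varphi,N}$ admit the deterministic bound
\[
\left|\Delta M_t^{\varphi,N}\right| \leq \frac{\|\nabla\varphi\|}{\sqrt N}\leq \frac{2\|\varphi\|_{lip}}{\sqrt N} \xrightarrow[N\to\infty]{}0,
\]
so Lindeberg's condition is trivially satisfied. Since the argument of Step~1 extends to every $t\in[0,T]$, the functional martingale CLT (e.g.\ Theorem~VIII.3.11 in Jacod--Shiryaev, or Theorem~7.1.4 in Ethier--Kurtz) implies that $M^{\varphi,N}$ converges weakly in $D([0,T],\mathbb R)$ to a continuous centered Gaussian martingale $M^\varphi$ whose predictable quadratic variation equals the limit obtained in Step~1.

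\textbf{Step 3: Identification of the limit.} The stated Gaussian process $G_t:=\int_0^t\sqrt{\phi(\int_0^s h(s-u)dm_u)}\,\langle\sqrt{\mathcal L_s}\nabla\varphi(s),dB_s\rangle$ is a continuous centered Gaussian martingale with
\[
\langle G\rangle_t = \int_0^t \phi\!\left(\int_0^s h(s-u)dm_u\right)\sum_{x\in\mathbb N}\mathcal L_s(x)(\nabla\varphi(s,x))^2\,ds
= \langle M^\varphi\rangle_t.
\]
Since the law of a continuous centered Gaussian martingale is determined by its quadratic variation, $M^\varphi$ and $G$ are equal in law, completing the identification.

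\textbf{Main obstacle.} The main subtlety is justifying the probability convergence of $\langle M^{\varphi,N}\rangle_T$ when $(\nabla\varphi)^2$ is only a bounded Lipschitz function on $\mathbb N$ (not compactly supported), and when the nonlinearity $\phi$ is composed with a Stieltjes integral of the empirical mean process. The boundedness of $(\nabla\varphi)^2$ ensures that testing against the probability measures $L_s^N$ and $\mathcal L_s$ is valid, while the Lipschitz regularity of $\phi$ together with the uniform control on $\overline Z^N - m$ from Lemma~\ref{Basic-lem-0} transfers the empirical convergence into convergence of the integrand; the exponential-moment estimates then secure the required uniform integrability for the final $ds$-integration.
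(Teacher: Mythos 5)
Your proof is correct and follows essentially the same route as the paper: establish convergence of the predictable quadratic variation using the mean-field limit $L^N\to\mathcal L$, the uniform convergence $\overline Z^N\to m$, the Lipschitz property of $\phi$, and the moment bounds of Lemma~\ref{Basic-lem-0}, and then conclude weak convergence to the Gaussian martingale with that quadratic variation. The only cosmetic difference is that the paper makes the tightness step explicit — it verifies $C$-tightness of $M^{\varphi,N}$ by hand, bounding the modulus of continuity via Doob's maximal inequality and a fourth-moment Burkholder--Davis--Gundy estimate — whereas you cite a packaged martingale functional CLT and discharge the Lindeberg/asymptotically-negligible-jumps condition using the $O(N^{-1/2})$ jump bound; both amount to the same hypotheses of the martingale FCLT, so nothing is gained or lost.
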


\begin{proof} 
We first prove that $M^{\varphi,N}$ is $C$-tight in $D([0,T],\mathbb R)$, i.e., tightness in $D([0,T],\mathbb R)$ and  the set of limit points  is included in  $C([0,T],\mathbb R)$.  Note that for some positive constant $C$, we have
\begin{equation}\label{M-jump-bound-A}
\sup_{t\in[0,T]}\left|\Delta M_t^{\varphi,N}\right|\leq \frac{C}{\sqrt{N}},
\end{equation}
where $\Delta M_t^{\varphi,N}:=M_t^{\varphi,N}-M_{t-}^{\varphi,N}$.
We only need to show that for any $\eta>0$,
\begin{equation}\label{CLT-thm-lem-1-tight-1}
\lim_{\delta\rightarrow0}\lim_{N\rightarrow\infty}\mathbb{P}\left(\sup_{0\leq s,t\leq T,|s-t|\leq \delta}\left|M_t^{\varphi,N}-M_s^{\varphi,N}\right|\geq\eta\right)=0.
\end{equation}

First,   by the Doob's maximal  inequality,
\begin{equation*}
\begin{aligned}
\mathbb{E}\left(\sup_{t\in[0,T]}\left|M_t^{\varphi,N}\right|^2\right)\leq &4\mathbb{E}\left(\left(M^N_T\right)^2\right)\\
\leq &4\mathbb{E}\left( \int_0^{T} \left\<L_t^N,( \nabla \varphi(t))^2\right\>  \phi\left(\int_0^{t}h(t-s)d \overline{Z}^{N}_s\right)  dt\right)\\
\leq &4 \mathbb{E}\left( \int_0^{T} \|\nabla \varphi\|^2  \phi\left(\int_0^{t}
h(t-s)d \overline{Z}^{N}_s\right)  dt\right)\\
\leq &4\|\nabla \varphi\|^2 T\left( \phi(0)+\alpha\|h\|_0^T\mathbb{E}\left[\overline{Z}^{N}_T\right] \right).
\end{aligned}
\end{equation*}
Therefore,   by \eqref{Basic-lem-0-eq-2},  
\begin{equation}
\sup_{N\geq 1}\mathbb{E}\left(\sup_{t\in[0,T]}\left|M_t^{\varphi,N}\right|^2\right)<\infty.
\end{equation}
Without loss of generality, we assume that $T/\delta\in\mathbb{N}$.  Then by 
the Doob's maximal inequality
\begin{equation}\label{CLT-thm-lem-1-tight-2}
\begin{aligned}
&\mathbb{P}\left(\sup_{0\leq s,t\leq T,|s-t|\leq \delta}\left|M_t^{\varphi,N}-M_s^{\varphi,N}\right|\geq\eta\right)\\
\leq & \sum_{n=1}^{T/\delta}\mathbb{P}\left(\sup_{0\leq t\leq \delta}\left|M^N_{(n-1)\delta+t}-M^N_{(n-1)\delta}\right|\geq\eta/2\right)\\
\leq & \frac{16}{\eta^4}\sum_{n=1}^{T/\delta}\mathbb{E}\left(\left|M^N_{n\delta}-M^N_{(n-1)\delta}\right|^4\right).
\end{aligned}
\end{equation}

By applying the Burkholder-Davis-Gundy inequality for  the Poisson stochastic integrals, we have
\begin{equation}\label{CLT-thm-lem-1-tight-3}
\begin{aligned}
&\sum_{n=1}^{T/\delta}\mathbb{E}\left(\left|M^N_{n\delta}-M^N_{(n-1)\delta}\right|^4\right)\\
&\leq  \sum_{n=1}^{T/\delta}\mathbb{E}\left(\left( \int_{(n-1)\delta}^{n\delta} \<L_t^N,( \nabla \varphi(t))^2\>  \phi\left(\int_0^{t}
h(t-s)d \overline{Z}^{N}_s\right)  dt\right)^2\right)\\
&\leq4 \|\nabla\varphi\|^2\delta T\mathbb{E}\left(\left( \sup_{t\in[0,T]} \phi\left(\int_0^{t}h(t-s)d \overline{Z}^{N}_s\right)\right)^2\right)\\
&\leq4\|\nabla\varphi\|^2\delta T\mathbb{E}\left[\left( \phi(0)+\alpha\|h\|_0^T\overline{Z}^{N}_T  \right)^2\right]\\
&\leq8\|\nabla\varphi\|^2\delta T\left( |\phi(0)|^2+\alpha^2\left(\|h\|_0^T\right)^2\mathbb{E}\left[\left(\overline{Z}^{N}_T \right)^2\right] \right) .
\end{aligned}
\end{equation}
Therefore, by \eqref{Basic-lem-0-eq-2},   \eqref{CLT-thm-lem-1-tight-2} and \eqref{CLT-thm-lem-1-tight-3},  we complete the proof of \eqref{CLT-thm-lem-1-tight-1}.
 
Finally, by \cite{DelattreFournierHoffmann2016AAP}, we know that
$L^N_t$ converges to its mean-field limit $\mathcal L_t$  almost surely as $N\rightarrow\infty$. Then, as $N\to\infty$,
$$
\left\<M^{\varphi,N}\right\>_t{\to}  \int_0^{t} \<\mathcal L_s,( \nabla \varphi(s))^2\>  \phi\left(\int_0^{s}
h(s-u)d m_u\right)  ds,
$$
and
$\left\{M_t^{\varphi,N},t\in [0,T]\right\}$ converges weakly to a Gaussian process  with  mean $0$ and variance
$$
\left\<\mathcal L_t,( \nabla \varphi(t))^2\right\>  \phi\left(\int_0^{t} h(t-s)d m_s\right).
$$
Note that $ \int_0^T \sqrt{  \phi\left(\int_0^{t}
h(t-u)d m_u\right) }\<\sqrt{\mathcal L_t}\nabla\varphi(t),d  B_t\>$ is a Gaussian process  with mean $0$ and variance
$$
\left\<\mathcal L_t,( \nabla \varphi(t))^2\right\>  \phi\left(\int_0^{t}
h(t-s)d m_s\right).
$$
This completes the proof of the lemma.
\end{proof}

Next, we provide an estimate that concerns the third term on the right hand side of \eqref{Martingale-eq-0},
and this estimate will be used later to show that $\left\{\widehat{L}_t^N,N\geq 1\right\}$ is tight (Lemma~\ref{CLT-thm-lem-4}).

\begin{lem}\label{CLT-thm-lem-2}
Suppose (A.1), (A.2)  and   (A.3) hold.  Then for any $\epsilon>0$,
$$
\begin{aligned}
\lim_{N\to\infty}&\mathbb P\bigg( \sup_{t\in[0,T]}\bigg| \left( \phi\left(\int_0^{t} h(t-s)d \overline{Z}^{N}_s\right)-\phi\left(\int_0^{t} h(t-s)d m_s\right)\right) \bigg|\geq \epsilon\bigg)=0.
\end{aligned}
$$
\end{lem}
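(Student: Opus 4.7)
The plan is to reduce the claimed uniform estimate on $\phi\bigl(\int_0^t h(t-s)d\overline{Z}^N_s\bigr) - \phi\bigl(\int_0^t h(t-s)dm_s\bigr)$ to a uniform estimate on $\overline{Z}^N_t - m_t$, and then to invoke Lemma~\ref{Basic-lem-0}. First, by the $\alpha$-Lipschitz property of $\phi$ from assumption (A.2),
$$
\sup_{t\in[0,T]}\left|\phi\!\left(\int_0^t h(t-s)d\overline{Z}^N_s\right) - \phi\!\left(\int_0^t h(t-s)dm_s\right)\right| \leq \alpha\sup_{t\in[0,T]}\left|\int_0^t h(t-s)d(\overline{Z}^N_s - m_s)\right|.
$$

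Next, using the Stieltjes integration by parts formula, which is justified by assumption (A.1) (differentiability of $h$ with locally bounded $|h'|$) together with $\overline{Z}^N_0 = m_0 = 0$, I would rewrite
$$
\int_0^t h(t-s)d(\overline{Z}^N_s - m_s) = h(0)(\overline{Z}^N_t - m_t) + \int_0^t h'(t-s)(\overline{Z}^N_s - m_s)ds,
$$
which is precisely the manipulation already carried out in the proof of Lemma~\ref{Basic-lem-0} just before \eqref{Basic-lem-0-eq-9}. Taking absolute values and suprema on both sides yields
$$
\sup_{t\in[0,T]}\left|\int_0^t h(t-s)d(\overline{Z}^N_s - m_s)\right| \leq \bigl(h(0) + \|h'\|_{L^1[0,T]}\bigr)\sup_{t\in[0,T]}\bigl|\overline{Z}^N_t - m_t\bigr|.
$$

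Finally, I would appeal to Lemma~\ref{Basic-lem-0}, specifically the concentration bound \eqref{Basic-lem-0-eq-3}, which shows that $\sup_{t\in[0,T]}|\overline{Z}^N_t - m_t| \to 0$ in probability as $N\to\infty$ (indeed, exponentially fast). Setting $c := \alpha\bigl(h(0) + \|h'\|_{L^1[0,T]}\bigr)$, for any $\epsilon>0$,
$$
\mathbb P\!\left(\sup_{t\in[0,T]}\left|\phi\!\left(\int_0^t h(t-s)d\overline{Z}^N_s\right) - \phi\!\left(\int_0^t h(t-s)dm_s\right)\right|\geq\epsilon\right) \leq \mathbb P\!\left(\sup_{t\in[0,T]}\bigl|\overline{Z}^N_t - m_t\bigr|\geq\epsilon/c\right),
$$
which tends to $0$ as $N\to\infty$.

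There is no serious obstacle in this argument: it is a three-step combination of Lipschitzness of $\phi$, Stieltjes integration by parts (enabled by the differentiability of $h$ in (A.1)), and the concentration estimate already established in Lemma~\ref{Basic-lem-0}. The only minor point requiring care is that $\overline{Z}^N$ is a pure-jump process of bounded variation, but because $\overline{Z}^N_0 = 0$ the boundary term $h(t)\overline{Z}^N_0$ in the integration by parts formula vanishes, and no further regularization is needed.
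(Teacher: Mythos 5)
Your proof is correct and follows essentially the same route as the paper: Lipschitz bound on $\phi$, Stieltjes integration by parts to reduce to $\sup_{t\in[0,T]}|\overline{Z}^N_t - m_t|$, then an appeal to Lemma~\ref{Basic-lem-0}. The only cosmetic difference is that the paper finishes via the second-moment estimate \eqref{N:bar:Z:finite} (derived from \eqref{Basic-lem-0-eq-6}) plus Chebyshev, whereas you invoke the tail bound \eqref{Basic-lem-0-eq-3} directly; both are equally valid consequences of the same lemma.
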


\begin{proof}
Since for any $0\leq t\leq T$:
\begin{equation}\label{CLT-thm-lem-2-eq-0}
\begin{aligned}
&\bigg| \left( \phi\left(\int_0^{t} h(t-s)d \overline{Z}^{N}_s\right)-\phi\left(\int_0^{t} h(t-s)d m_s\right)\right) \bigg|
\\
&\leq\|\phi\|_{lip}\left(h(0)+\| h'\|_{L^{1}[0,T]}\right)\sup_{0\leq s\leq t}\left|\overline{Z}_{s}^{N}-m_{s}\right|.
\end{aligned}
\end{equation}
By  \eqref{Basic-lem-0-eq-6}, 
\begin{equation}\label{N:bar:Z:finite}
\sup_{N\in\mathbb{N}}N\mathbb{E}\left(\sup_{0\leq t\leq T}\left|\overline{Z}_{t}^{N}-m_{t}\right|^{2}\right)<\infty.
\end{equation}
Thus, by Chebychev's inequality,
\begin{align*}
&\mathbb P\bigg( \sup_{t\in[0,T]}\bigg| \left( \phi\left(\int_0^{t} h(t-s)d \overline{Z}^{N}_s\right)-\phi\left(\int_0^{t} h(t-s)d m_s\right)\right) \bigg|\geq \epsilon\bigg)
\\
&\leq
\mathbb{P}\left(\|\phi\|_{lip}\left(h(0)+\| h'\|_{L^{1}[0,T]}\right)
\sup_{0\leq t\leq T}\left|\overline{Z}_{t}^{N}-m_{t}\right|
\geq\epsilon\right)
\\
&\leq\frac{\|\phi\|_{lip}^{2}\left(h(0)+\| h'\|_{L^{1}[0,T]}\right)^{2}
\sup_{N\in\mathbb{N}}N\mathbb{E}\left[\sup_{0\leq t\leq T}\left|\overline{Z}_{t}^{N}-m_{t}\right|^{2}\right]}{N\epsilon^{2}},
\end{align*}
which goes to $0$ as $N\rightarrow\infty$. This completes the proof.
\end{proof}

Next, we show that the term $\phi\left(\int_0^{t} h(t-s)d \overline{Z}^{N}_s\right)-\phi\left(\int_0^{t} h(t-s)d m_s\right)$ in equation~\eqref{Martingale-eq-0}
can be approximated by $\phi'\left(\int_0^{t} h(t-s)d m_s\right)  \int_0^{t} h(t-s)d \left(\overline{Z}^{N}_s- m_s\right)$ in the sense
in the following lemma which will help us identify the limit of $\left\{\widehat{L}^N,N\geq 1\right\}$ in the proof of Theorem~\ref{CLT-thm}.

\begin{lem}\label{CLT-thm-lem-3}
Suppose (A.1), (A.2)  and   (A.3) hold.  Then for any $\epsilon>0$,
$$
\begin{aligned}
\lim_{N\to\infty}&\mathbb P\bigg(\sup_{t\in[0,T]}\sqrt{N}\bigg|  \phi\left(\int_0^{t} h(t-s)d \overline{Z}^{N}_s\right)-\phi\left(\int_0^{t} h(t-s)d m_s\right)
\\
&\qquad\qquad-\phi'\left(\int_0^{t} h(t-s)d m_s\right)  \int_0^{t} h(t-s)d \left(\overline{Z}^{N}_s- m_s\right)\bigg|\geq \epsilon\bigg)=0.
\end{aligned}
$$
\end{lem}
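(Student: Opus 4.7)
\textbf{Proof proposal for Lemma~\ref{CLT-thm-lem-3}.} The strategy is a standard first-order Taylor linearization of $\phi$ around $\int_0^t h(t-s)dm_s$, where the remainder is controlled by the uniform continuity of $\phi'$ (Assumption~(A.3)) together with the exponential moment bounds from Lemma~\ref{Basic-lem-0}.

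First, for brevity set
\[
a_t^N := \int_0^t h(t-s)\,d\overline{Z}^{N}_s,\qquad b_t := \int_0^t h(t-s)\,dm_s.
\]
By integration by parts,
\[
a_t^N - b_t = h(0)\bigl(\overline{Z}^{N}_t - m_t\bigr) + \int_0^t h'(t-s)\bigl(\overline{Z}^{N}_s - m_s\bigr)\,ds,
\]
so for $K := h(0) + \|h'\|_{L^1[0,T]}$ we have the deterministic bound
\[
\sup_{t\in[0,T]} \bigl|a_t^N - b_t\bigr| \;\leq\; K\,\sup_{0\le s\le T}\bigl|\overline{Z}^{N}_s - m_s\bigr|.
\]
By \eqref{Basic-lem-0-eq-6} of Lemma~\ref{Basic-lem-0}, $\sqrt{N}\sup_{t\in[0,T]}|\overline{Z}^{N}_t - m_t|$ is tight; in particular $\sup_{t\in[0,T]}|a_t^N - b_t|\to 0$ in probability, while $\sqrt{N}\sup_{t\in[0,T]}|a_t^N-b_t|$ remains bounded in probability.

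Next, apply the exact Taylor remainder formula: for every $t$,
\[
\phi(a_t^N)-\phi(b_t)-\phi'(b_t)(a_t^N-b_t)
= (a_t^N-b_t)\int_0^1\!\bigl[\phi'\bigl(b_t+r(a_t^N-b_t)\bigr)-\phi'(b_t)\bigr]\,dr.
\]
Given $\epsilon>0$, the plan is to fix any $\eta>0$ and a large constant $R>0$, then on the event
\[
\Omega_{N,R,\delta} := \left\{\sqrt{N}\sup_{t\in[0,T]}|a_t^N-b_t|\le R\right\}\cap\left\{\sup_{t\in[0,T]}|a_t^N-b_t|\le \delta\right\},
\]
use uniform continuity of $\phi'$ on $[0,\infty)$ (Assumption~(A.3)) to choose $\delta=\delta(\epsilon,R)>0$ so that $|x-y|\le\delta$ implies $|\phi'(x)-\phi'(y)|\le \epsilon/R$. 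On $\Omega_{N,R,\delta}$, the remainder is therefore bounded by
\[
\sup_{t\in[0,T]}\sqrt{N}\bigl|\phi(a_t^N)-\phi(b_t)-\phi'(b_t)(a_t^N-b_t)\bigr|\;\leq\;\frac{\epsilon}{R}\cdot R \;=\; \epsilon.
\]
It remains to observe that $\mathbb{P}(\Omega_{N,R,\delta}^c)\to 0$ by first choosing $R$ large so that the tightness bound gives $\mathbb{P}(\sqrt{N}\sup_t|a_t^N-b_t|>R)<\eta$ uniformly in $N$ (using \eqref{Basic-lem-0-eq-6}), and then, for that fixed $\delta$, using the convergence in probability of $\sup_t|a_t^N-b_t|$ to $0$ to make $\mathbb{P}(\sup_t|a_t^N-b_t|>\delta)<\eta$ for all large $N$. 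Since $\eta$ is arbitrary, the probability in the statement of the lemma tends to $0$.

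The only delicate point is that the Taylor remainder must be controlled \emph{uniformly in $t$}; this is precisely why one combines the uniform continuity of $\phi'$ (giving a uniform modulus of continuity on any bounded set of arguments, and in fact on all of $[0,\infty)$) with the sup-norm tightness of $\sqrt{N}(\overline{Z}^N-m)$ established in Lemma~\ref{Basic-lem-0}. All other steps are routine.
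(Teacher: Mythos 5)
Your proof is correct and follows essentially the same route as the paper's: an integral-form Taylor expansion of $\phi$ around $\int_0^t h(t-s)\,dm_s$, the integration-by-parts bound $\sup_t|a_t^N-b_t|\le K\sup_t|\overline{Z}^N_t-m_t|$, the uniform modulus of continuity of $\phi'$ from (A.3), and the tightness of $\sqrt N\sup_t|\overline{Z}^N_t-m_t|$ supplied by Lemma~\ref{Basic-lem-0}. The paper phrases the same estimate via Chebyshev with the explicit quantity $V_{\phi'}(\delta)$ and the bound $\sup_N N\,\mathbb E\bigl[\sup_t|\overline{Z}^N_t-m_t|^2\bigr]<\infty$, letting $N\to\infty$ before $\delta\to0$, whereas you package it as an $(R,\delta,\eta)$ tightness argument; the content is the same. (One cosmetic nit: choose $\delta$ so that the modulus is $\le\epsilon/(2R)$ rather than $\epsilon/R$, so that on $\Omega_{N,R,\delta}$ the remainder is strictly below $\epsilon$, matching the event $\{\,\cdot\ge\epsilon\,\}$ in the statement.)
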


\begin{proof}
 By the Taylor expansion, we have
$$
\begin{aligned}
& \phi\left(\int_0^{t} h(t-s)d \overline{Z}^{N}_s\right)\\
=&\phi\left(\int_0^{t} h(t-s)dm_s\right)+\int_0^1    \phi'\left(\int_0^{t} h(t-s)d m_s+u \int_0^{t} h(t-s)d \left(\overline{Z}^{N}_s- m_s\right)\right)du\\
&\qquad\qquad\qquad\qquad\qquad\qquad\cdot \int_0^{t} h(t-s)d \left(\overline{Z}^{N}_s- m_s\right).
 \end{aligned}
$$
Set  
\begin{equation}\label{V:eqn}
V_{\phi'}(\delta):=\sup_{0\leq y\leq\delta}\sup_{x\in [0,\infty)} |\phi'(x+y)-\phi'(x)|.
\end{equation}
Since $\phi'$ is uniformly continuous,  $V_{\phi'}(\delta)\to 0$ as $\delta\to 0$.
Note that by integration by parts
\begin{align*}
\left| \int_0^{t} h(t-s)d \left(\overline{Z}^{N}_s- m_s\right)\right|
&=\left| h(0)\left(\overline{Z}^{N}_t- m_t\right)+\int_0^{t} h'(t-s)\left(\overline{Z}^{N}_s- m_s\right)ds\right|
\\
&\leq  \left(h(0)+\| h'\|_{L^{1}[0,T]}\right)\sup_{0\leq s\leq t}\left|\overline{Z}_{s}^{N}-m_{s}\right|.
\end{align*}
Therefore, for any $\delta>0$ and $\epsilon>0$,
\begin{align*}
&\mathbb P\bigg(\sup_{t\in[0,T]}\sqrt{N}\bigg|  \phi\left(\int_0^{t} h(t-s)d \overline{Z}^{N}_s\right)-\phi\left(\int_0^{t} h(t-s)d m_s\right)\\
&\qquad\qquad-\phi'\left(\int_0^{t} h(t-s)d m_s\right)  \int_0^{t-} h(t-s)d \left(\overline{Z}^{N}_s- m_s\right)\bigg|\geq \epsilon\bigg)
\\
&\leq\mathbb{P}\left(\left(h(0)+\| h'\|_{L^{1}[0,T]}\right)\sup_{0\leq s\leq T}\left|\overline{Z}_{s}^{N}-m_{s}\right|
\geq \delta\right)
\\
&\qquad+
\mathbb{P}\left(V_{\phi'}(\delta)   \left(h(0)+\| h'\|_{L^{1}[0,T]}\right)\sqrt{N}\sup_{0\leq s\leq T}\left|\overline{Z}_{s}^{N}-m_{s}\right|
\geq \epsilon\right)
\\
&\leq \left(\frac{1}{\delta^2N}+\frac{(V_{\phi'}(\delta))^2 }{\epsilon^2}\right)
\sup_{N\in\mathbb{N}}N\mathbb{E}\left(\sup_{0\leq t\leq T}
\left|\overline{Z}_{t}^{N}-m_{t}\right|^{2}\right) \left(h(0)+\| h'\|_{L^{1}[0,T]}\right)^{2}.
\end{align*}
By letting first $N\to\infty$ and applying \eqref{N:bar:Z:finite}, and then $\delta\to 0$, we complete  the proof of  Lemma~\ref{CLT-thm-lem-3}.
\end{proof}

Next, let us show that $\left\{\widehat{L}_t^N,N\geq 1\right\}$ is tight in $D([0,T], \mathbb H^{-1}(\mathbb N))$.

\begin{lem}\label{CLT-thm-lem-4}
Suppose (A.1), (A.2)  and   (A.3) hold.  Then $\left\{\widehat{L}_t^N,N\geq 1\right\}$ is tight in  $D([0,T], \mathbb H^{-1}(\mathbb N))$.
 \end{lem}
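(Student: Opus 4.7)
The plan is to reduce tightness in $D([0,T],\mathbb{H}^{-1}(\mathbb{N}))$ to the $C$-tightness of the real-valued projections $\{\langle\widehat{L}^N_\cdot,\varphi\rangle\}_{N\geq 1}$ in $D([0,T],\mathbb{R})$ for each fixed $\varphi\in C^{lip}(\mathbb{N})$, combined with a compact containment property in $\mathbb{H}^{-1}(\mathbb{N})$, and then invoke a Jakubowski/Mitoma-type criterion.

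\textbf{Step 1 (Decomposition).} For a time-independent test function $\varphi\in C^{lip}(\mathbb{N})$ (so $\partial_t\varphi\equiv 0$), the identity \eqref{Martingale-eq-0} reduces to
\[
\langle\widehat{L}^N_t,\varphi\rangle = A^N_t(\varphi) + B^N_t(\varphi) + M^{\varphi,N}_t,\qquad t\in[0,T],
\]
where $A^N_t(\varphi):=\int_0^t\langle\widehat{L}^N_s,\nabla\varphi\rangle\,\phi\bigl(\int_0^s h(s-u)\,dm_u\bigr)\,ds$, $B^N_t(\varphi)$ is the corresponding integral containing the $\sqrt{N}$-scaled difference $\phi\bigl(\int_0^{s-}h(s-u)\,d\overline{Z}^N_u\bigr)-\phi\bigl(\int_0^s h(s-u)\,dm_u\bigr)$, and $M^{\varphi,N}$ is the martingale defined in \eqref{Martingale-eq-1}.

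\textbf{Step 2 ($C$-tightness of each summand).} Lemma~\ref{CLT-thm-lem-1} already yields $C$-tightness of $\{M^{\varphi,N}\}$. For $A^N(\varphi)$, note that $\nabla\varphi\in C^{lip}(\mathbb{N})$ with $\|\nabla\varphi\|_{lip}\leq 3\|\varphi\|_{lip}$ and that $s\mapsto\phi(\int_0^s h(s-u)dm_u)$ is bounded on $[0,T]$, whence
\[
|A^N_t(\varphi)-A^N_s(\varphi)|\leq C|t-s|\sup_{u\in[0,T]}|\langle\widehat{L}^N_u,\nabla\varphi\rangle|,
\]
and Lemma~\ref{Basic-lem-1} (inequality \eqref{Basic-lem-1-eq-3}) ensures that the right-hand factor is tight, giving $C$-tightness of $A^N(\varphi)$. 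For $B^N(\varphi)$, we use $|\langle L^N_s,\nabla\varphi\rangle|\leq\|\varphi\|_{lip}$ together with the integration-by-parts estimate from \eqref{CLT-thm-lem-2-eq-0} to bound
\[
|B^N_t(\varphi)-B^N_s(\varphi)|\leq C\|\varphi\|_{lip}|t-s|\cdot\sqrt{N}\sup_{u\in[0,T]}|\overline{Z}^N_u-m_u|,
\]
and Lemma~\ref{Basic-lem-0} (equation \eqref{Basic-lem-0-eq-6}) controls the $\sqrt{N}$-amplified supremum in probability, yielding $C$-tightness of $B^N(\varphi)$.

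\textbf{Step 3 (Compact containment and conclusion).} For each $\varphi\in C^{lip}(\mathbb{N})$ and $\epsilon>0$, inequality \eqref{Basic-lem-1-eq-3} produces $L=L(\epsilon,\varphi)$ with $\sup_{N\geq 1}\mathbb{P}\bigl(\sup_{t\in[0,T]}|\langle\widehat{L}^N_t,\varphi\rangle|>L\bigr)<\epsilon$. Since the evaluations $\mu\mapsto\langle\mu,\varphi\rangle$ form a separating family on $\mathbb{H}^{-1}(\mathbb{N})$, Banach-Alaoglu provides weak*-compact subsets of $\mathbb{H}^{-1}(\mathbb{N})$ that contain the trajectories of $\widehat{L}^N$ with arbitrarily high probability, uniformly in $N$. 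Combining this compact containment with the $C$-tightness of all real-valued projections from Step~2 via a standard Jakubowski/Mitoma-type argument yields tightness of $\{\widehat{L}^N\}_{N\geq 1}$ in $D([0,T],\mathbb{H}^{-1}(\mathbb{N}))$.

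\textbf{Main obstacle.} The principal difficulty lies in Step~2 for $B^N(\varphi)$: the prefactor $\sqrt{N}$ multiplying $\phi(\int_0^{s-}h\,d\overline{Z}^N)-\phi(\int_0^s h\,dm)$ must be tamed uniformly in both $N$ and $t$, and this is precisely enabled by the subgaussian-type bound on $\sqrt{N}\sup_t|\overline{Z}^N_t-m_t|$ furnished by Lemma~\ref{Basic-lem-0}. The exponential-moment machinery in Section~\ref{sec:exponential} was in fact developed with this critical step in mind.
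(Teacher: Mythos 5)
Your proposal is correct and takes essentially the same route as the paper: the same decomposition into a martingale term, a term involving $\langle\widehat{L}^N,\nabla\varphi\rangle$, and a $\sqrt{N}$-scaled difference term, controlled respectively by Lemma~\ref{CLT-thm-lem-1}, Lemma~\ref{Basic-lem-1}, and Lemma~\ref{Basic-lem-0}, followed by the Mitoma-type reduction to projections. Where the paper cites Lemma~\ref{CLT-thm-lem-2} for the $B^N$-term, you invoke the uniform exponential bound \eqref{Basic-lem-0-eq-6} directly, which is what actually underlies that lemma's proof, so the substance is identical.
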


\begin{proof}
To show the tightness of the sequence  $\left\{\widehat{L}^N, N\geq 1\right\}$ in $D([0,T], \mathbb H^{-1}(\mathbb N))$, it is sufficient to prove that for any $\varphi\in C^{lip}(\mathbb{N})$, \begin{equation}\label{CLT-thm-lem-4-eq-1}
\limsup_{L\rightarrow \infty}\limsup_{N\rightarrow\infty}\mathbb{P}\left(\sup_{t\in[0,T]}\left|\left\<\widehat{L}_t^N,\varphi\right\>\right|\geq L\right)=0,
\end{equation}
and for any $\varphi\in C^{lip}(\mathbb{N})$ and $\varepsilon>0$, 
\begin{equation}\label{CLT-thm-lem-4-eq-2}
\limsup_{\delta\rightarrow 0}\limsup_{N\rightarrow\infty}\mathbb{P}\left(\sup_{s,t\in[0,T],0\leq|t-s|<\delta}\left|\left\<\widehat{L}_t^N-\widehat{L}^N_s,\varphi\right\>\right|\geq \varepsilon\right)=0.
\end{equation}
We only prove \eqref{CLT-thm-lem-4-eq-2} since \eqref{CLT-thm-lem-4-eq-1} is similar. 
  By \eqref{Martingale-eq-0}, 
\begin{equation}\label{CLT-thm-lem-4-eq-5}
\begin{aligned}
\left\<\widehat{L}^N_t,\varphi\right\>
=& M_t^{\varphi,N}+  \int_0^{t}\left\<\widehat{L}_s^N, \nabla \varphi\right\>  \phi\left(\int_0^{s} h(s-u)d m_u\right) ds\\
&+\int_0^{t}\left\<{L}_s^N, \nabla \varphi\right\>   \sqrt{N}\left( \phi\left(\int_0^{s} h(s-u)d \overline{Z}^{N}_u\right)-\phi\left(\int_0^{s} h(s-u)d m_u\right)\right) ds.
\end{aligned}
\end{equation}
It follows  from the proof of Lemma~\ref{CLT-thm-lem-1} that
\begin{equation}\label{CLT-thm-lem-4-eq-6}
\limsup_{\delta\rightarrow 0}\limsup_{N\rightarrow\infty}\mathbb{P}\left(\sup_{s,t\in[0,T],0\leq|t-s|<\delta}\left|M_t^{\varphi,N}-M_s^{\varphi,N}\right|\geq \varepsilon\right)=0.
\end{equation}
Note that for any $0\leq s<t\leq T$,
$$
\begin{aligned}
& \int_s^{t}\left|\left\<{L}_v^N, \nabla \varphi\right\> \right|  \sqrt{N}\left| \phi\left(\int_0^{v} h(v-u)d \overline{Z}^{N}_u\right)-\phi\left(\int_0^{v} h(v-u)d m_u\right)\right| dv\\
\leq & |t-s|\|\varphi\|_{lip}   \sqrt{N}\sup_{v\in[0,T]}\left| \phi\left(\int_0^{v} h(v-u)d \overline{Z}^{N}_u\right)-\phi\left(\int_0^{v} h(v-u)d m_u\right)\right|.
\end{aligned}
$$
By Lemma~\ref{CLT-thm-lem-2}, 
\begin{equation}\label{CLT-thm-lem-4-eq-7}
\begin{aligned}
\limsup_{\delta\rightarrow 0}\limsup_{N\rightarrow\infty}\mathbb{P}&\bigg(\sup_{s,t\in[0,T],0\leq|t-s|<\delta}\bigg|\int_s^{t}\left\<{L}_v^N, \nabla \varphi\right\>  \sqrt{N}\bigg( \phi\left(\int_0^{v} h(v-u)d \overline{Z}^{N}_u\right)\\
&-\phi\left(\int_0^{v} h(v-u)d m_u\right)\bigg) dv\bigg|\geq \varepsilon\bigg)=0.
\end{aligned}
\end{equation}

Since
$$
\begin{aligned}
&\sup_{s,t\in[0,T],0\leq|t-s|<\delta}\left|\int_s^{t}\left\<\widehat{L}_u^N, \nabla \varphi\right\>  \phi\left(\int_0^{u} h(u-v)d m_v\right) du\right|\\
\leq &\delta \left( \phi(0)+\alpha\|h\|_0^Tm_T  \right) \sup_{t\in [0,T]}\left|\left\<\widehat{L}_t^N, \nabla \varphi\right\> \right|,
\end{aligned}
$$
by \eqref{Basic-lem-1-eq-3} and the Chebyshev's inequality, we have that 
 \begin{equation}\label{CLT-thm-lem-4-eq-8}
\limsup_{\delta\rightarrow 0}\limsup_{N\rightarrow\infty}
\mathbb{P}\left(\sup_{s,t\in[0,T],0\leq|t-s|<\delta}\left|\int_s^{t}\left\<\widehat{L}_u^N, \nabla \varphi\right\>  \phi\left(\int_0^{u} h(u-v)d m_v\right) du\right|\geq \varepsilon\right)=0.
\end{equation}
  
Finally, by combining with \eqref{CLT-thm-lem-4-eq-6}, \eqref{CLT-thm-lem-4-eq-7} and \eqref{CLT-thm-lem-4-eq-8}, we obtain \eqref{CLT-thm-lem-4-eq-2}.
This completes the proof.
\end{proof}

Now, we are ready to prove Theorem~\ref{CLT-thm}.

\begin{proof}[Proof of Theorem~\ref{CLT-thm}]  
Let  $X$ be a limit point. Without loss of
generality, we assume that $\widehat{L}^N$ converges weakly  to $X$.  Then  by Lemma~\ref{CLT-thm-lem-1}, it is easy to show that $X$
satisfies \eqref{CLT-thm-eq}. To prove the uniqueness, let  $X$ and  $\widetilde{X}$  be two solutions  of \eqref{CLT-thm-eq} and
define $\overline{X}:=X-\widetilde{X}$.
 Then  for any $t\in [0,T]$,
\begin{equation}\label{unique-CLT-thm-eq}
\begin{aligned}
\left\<\overline{X}_t,\varphi(t)\right\>=  &\int_0^t \left\<\overline{X}_s,\partial_s\varphi(s)\right\>ds+
 \int_0^{t}\left\<\overline{X}_s, \nabla \varphi(s)\right\>  \left(  \phi\left(\int_0^{s} h(s-u)d m_u\right)\right) ds\\
&+\int_0^{t}\left\<\mathcal{L}_s, \nabla \varphi(s)\right\> \phi'\left(\int_0^{s} h(s-u)d m_u\right) \int_0^{s} h(s-u)d\left\<\overline{X}_u, \ell\right\>ds.
\end{aligned}
\end{equation}
First, we take $\varphi(t,x)=\ell(x)$ in \eqref{unique-CLT-thm-eq}.  Then 
\begin{align*}
\left\<\overline{X}_t,\ell\right\>= &\int_0^{t}  \phi'\left(\int_0^{s} h(s-u)d m_u\right) \int_0^{s} h(s-u)d\left\<\overline{X}_u, \ell\right\>ds.
\end{align*}
Set  $G(t):=\sup_{s\in[0,t]}\left|\left\<\overline{X}_s,\ell\right\>\right|$.   Then  by the integration by parts
$$
\left|\int_0^{s} h(s-u)d\left\<\overline{X}_u, \ell\right\>\right|= \left|h(0) \<\overline{X}_s,\ell\>+ \int_0^s \<\overline{X}_u,\ell\> dh(s-u) \right|\leq \left(h(0)+\| h'\|_{L^{1}[0,T]}\right)G(s).
$$
Therefore,
$$
G(t)\leq \left(h(0)+\| h'\|_{L^{1}[0,T]}\right)\int_0^{t} \left| \phi'\left(\int_0^{s} h(s-u)d m_u\right)\right| G(s) ds,~~0\leq t\leq T.
$$
and by the Gronwall inequality,    $G(T)=0$.  Then we have that  for any $t\in [0,T]$,  $\varphi\in C^{lip}(\mathbb N)$,
\begin{equation}\label{unique-CLT-thm-eq-1}
\begin{aligned}
\left\<\overline{X}_t,\varphi\right\>=  &
 \int_0^{t}\left\<\overline{X}_s, \nabla \varphi\right\>   \phi\left(\int_0^{s} h(s-u)d m_u\right)ds.
\end{aligned}
\end{equation}
For each $n\geq0$, take $\varphi(x)=\varphi_n(x):=I_{[n+1,\infty)\cap\mathbb N}(x)$ in \eqref{unique-CLT-thm-eq-1}. Then 
we have $\left\<\overline{X}_t,\varphi\right\>=\sum_{x\geq n+1}\overline{X}_{t}(x)$ and $\left\<\overline{X}_s, \nabla \varphi\right\>=\sum_{x+1\geq n+1}\overline{X}_{s}(x)-\sum_{x\geq n+1}\overline{X}_{s}(x)=\overline{X}_{s}(n)$ such that
\begin{equation}\label{unique-CLT-thm-eq-2}
\begin{aligned}
\sum_{x\geq n+1}\overline{X}_t(x)=  & 
 \int_0^{t} \overline{X}_s(n) \phi\left(\int_0^{s} h(s-u)d m_u\right)ds.
\end{aligned}
\end{equation}
Noting  $\sum_{x\geq 0}\overline{X}_t(x)=0$, by letting $n=0$ in \eqref{unique-CLT-thm-eq-2}, we have that
$$
- \overline{X}_t(0) =   
 \int_0^{t} \overline{X}_s(0)    \phi\left(\int_0^{s} h(s-u)d m_u\right) ds.
$$
Thus, by the Gronwall inequality,   we conclude  $\sup_{t\in [0,T]}|\overline{X}_t(0)|=0$.
By letting $n=1$ in \eqref{unique-CLT-thm-eq-2}, we get
$$
- \overline{X}_t(1)- \overline{X}_t(0)
=- \overline{X}_t(1)   
= \int_0^{t} \overline{X}_s(1)   \phi\left(\int_0^{s} h(s-u)d m_u\right) ds.
$$
By the Gronwall inequality,   we conclude  $\sup_{t\in [0,T]}|\overline{X}_t(1)|=0$.
Recursively,  we can obtain 
$$
-\overline{X}_t(n)=   
 \int_0^{t} \overline{X}_s(n) \phi\left(\int_0^{s} h(s-u)d m_u\right) ds,~~n\geq 0.
$$
Thus, by the Gronwall inequality,   we conclude  $\sup_{t\in [0,T]}|\overline{X}_t(n )|=0$ for all $n\geq 0$. This completes the proof of Theorem~\ref{CLT-thm}.
\end{proof}

\section{Moderate Deviations for the Mean-Field Limit}\label{MDPMeanFieldsSection}

In this section, we study moderate deviations for the mean-field limit. We first show the exponential tightness in Section~\ref{exp:tightness:sec}, 
and then prove the upper bound  and the lower bound in Sections~\ref{upper:bound:MDP} and \ref{lower:bound:MDP} respectively. The proof of the lower bound relies
on the study of perturbed Hawkes processes which is provided in Section~\ref{sec:perturbed}.

\subsection{Exponential tightness}\label{exp:tightness:sec}

In this section, we are going to show the exponential  tightness of  $\left\{\widetilde{L}_t^N,N\geq 1\right\}$.
First of all, by \eqref{central-mu-clt-def-1}, \eqref{central-mu-mdp-def-1} and \eqref{Martingale-eq-0}, we have  that for any $\varphi\in C^{1,lip}([0,T]\times \mathbb N)$,
\begin{equation}\label{Basic-formula-eq-0}
\begin{aligned}
\left\<\widetilde{L}^N_t,\varphi(t)\right\>=& \int_0^t \left\<\widetilde{L}_s^N,\partial_s\varphi(s)\right\>ds\\
&+\int_0^{t}\left\<{L}_s^N, \nabla \varphi(s)\right\>   \frac{\sqrt{N}}{a(N)}\left( \phi\left(\int_0^{s} h(s-u)d \overline{Z}^{N}_u\right)-\phi\left(\int_0^{s} h(s-u)d m_u\right)\right) ds\\
&+ \int_0^{t}\left\<\widetilde{L}_s^N, \nabla \varphi(s)\right\>  \phi\left(\int_0^{s} h(s-u)d m_u\right) ds+\frac{1}{a(N)}M_t^{\varphi, N},
\end{aligned}
\end{equation}
where $M_T^{\varphi, N}$ is a martingale defined by \eqref{Martingale-eq-1} with the predictable quadratic variation defined by \eqref{Martingale-eq-2}.  
We first show that  $\left\{\frac{1}{a(N)}M_t^{\varphi, N}, N\geq 1\right\}$ is exponentially tight.

\begin{lem}\label{MDP-lem-1}
Suppose (A.1), (A.2)  and   (A.3) hold.  Then $\left\{\frac{1}{a(N)}M_t^{\varphi, N}, N\geq 1\right\}$ is exponentially tight in  $D([0,T],\mathbb{R})$ for each $\varphi\in C^{1,lip}([0,T]\times \mathbb N)$.
\end{lem}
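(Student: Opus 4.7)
The plan is to establish exponential tightness of $\{M^{\varphi,N}/a(N)\}$ in $D([0,T],\mathbb{R})$ at speed $a^2(N)$ via the standard Puhalskii-type criterion: I will verify (i) exponential compact containment, namely
\begin{equation*}
\limsup_{L\to\infty}\limsup_{N\to\infty}\frac{1}{a^2(N)}\log\mathbb P\Big(\sup_{t\in[0,T]}|M^{\varphi,N}_t|\geq L a(N)\Big)=-\infty,
\end{equation*}
and (ii) an exponential modulus of continuity
\begin{equation*}
\limsup_{\delta\to 0}\limsup_{N\to\infty}\frac{1}{a^2(N)}\log\mathbb P\Big(\sup_{|s-t|<\delta}|M^{\varphi,N}_t-M^{\varphi,N}_s|\geq\varepsilon a(N)\Big)=-\infty
\end{equation*}
for every $\varepsilon>0$. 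Both will be obtained by imitating, at the moderate deviation scale, the exponential-supermartingale arguments already used in the proof of Lemma~\ref{Basic-lem-0}.

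For (i), since $|\Delta M^{\varphi,N}|\leq \|\nabla\varphi\|/\sqrt{N}$, a rescaling of the supermartingale $\mathcal E^{M,\theta}$ from Lemma~\ref{Basic-lem-0} shows that
\begin{equation*}
\exp\Big\{\pm\theta M^{\varphi,N}_t-\tfrac{N}{\|\nabla\varphi\|^2}\,g\!\big(\theta\|\nabla\varphi\|/\sqrt{N}\big)\,\langle M^{\varphi,N}\rangle_t\Big\}
\end{equation*}
is a positive supermartingale for every $\theta\geq 0$. Using $\langle M^{\varphi,N}\rangle_T\leq \|\nabla\varphi\|^2\big(\phi(0)T+\alpha\|h\|^T_0\,\overline Z^N_T\big)$, which follows from \eqref{Martingale-eq-2} together with $\phi(x)\le \phi(0)+\alpha x$, I would combine the Cauchy--Schwarz trick of \eqref{ineq:first}--\eqref{ineq:fourth} with the exponential moment bound \eqref{Basic-lem-0-eq-1} for $\overline Z^N_T$. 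Since $a(N)/\sqrt{N}\to 0$, for $N$ large one may take $\theta=\kappa L a(N)$ inside the valid range, so that $\theta\|\nabla\varphi\|/\sqrt{N}$ is small and $g(\theta\|\nabla\varphi\|/\sqrt{N})\le (\theta\|\nabla\varphi\|/\sqrt{N})^2$. This yields
\begin{equation*}
\mathbb E\exp\!\big(\pm\theta M^{\varphi,N}_T\big)\leq \exp\!\big(C\theta^2\big),
\end{equation*}
and, after applying Doob's exponential maximal inequality and optimizing over $\kappa$, produces a Gaussian-type bound $\mathbb P(\sup_t|M^{\varphi,N}_t|\geq L a(N))\leq 2\exp(-c L^2 a^2(N))$, which gives (i).

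For (ii), I would partition $[0,T]$ into $n=\lceil T/\delta\rceil$ subintervals and apply the same supermartingale estimate on each $[t_{k-1},t_k]$, where the increment $\langle M^{\varphi,N}\rangle_{t_k}-\langle M^{\varphi,N}\rangle_{t_{k-1}}$ contributes an extra factor of $\delta$. The analogous Chernoff computation then produces
\begin{equation*}
\mathbb P\Big(\sup_{t\in[t_{k-1},t_k]}|M^{\varphi,N}_t-M^{\varphi,N}_{t_{k-1}}|\geq\varepsilon a(N)\Big)\leq 2\exp\!\big(-c\varepsilon^2 a^2(N)/\delta\big),
\end{equation*}
up to an exceptional event whose probability is controlled by \eqref{Basic-lem-0-eq-2} and already decays superexponentially in $a^2(N)$. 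A union bound over the $O(T/\delta)$ subintervals only adds a $\log(T/\delta)$ correction that is negligible at the scale $a^2(N)$, and letting $\delta\to 0$ drives the exponent to $-\infty$.

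The main obstacle will be the simultaneous calibration of parameters: we need $\theta\|\nabla\varphi\|/\sqrt{N}$ to be small (so $g$ may be replaced by its quadratic upper bound), $\theta$ to scale like $L a(N)$ (respectively $\varepsilon a(N)/\delta$) in the Chernoff exponent so as to produce the desired factor $L^2 a^2(N)$ (respectively $\varepsilon^2 a^2(N)/\delta$), and the argument of the exponential moment $\mathbb E\exp(g(\cdot)\cdot\overline Z^N_T)$ appearing after Cauchy--Schwarz to remain in the admissible range of Lemma~\ref{Basic-lem-0}. The MDP assumption \eqref{mdp-speed-seq} that $a(N)/\sqrt N\to 0$ is precisely what makes these three constraints jointly satisfiable.
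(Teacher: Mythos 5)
Your proposal is correct and follows essentially the same route as the paper's proof: you reduce exponential tightness to the two Puhalskii criteria (compact containment and an exponential modulus of continuity), handle both by applying the Kallenberg exponential supermartingale inequality to $M^{\varphi,N}$ and rescaled increments of it, bound the predictable quadratic variation by a quantity linear in $\overline{Z}^N_T$, invoke the exponential moment bounds from Lemma~\ref{Basic-lem-0}, and finish with a union bound over $O(T/\delta)$ subintervals followed by Chernoff. The paper's write-up only spells out the modulus-of-continuity bound (claiming the compact containment is similar), and compresses the Chernoff/parameter-calibration step into a reference ``following the proof of \eqref{Basic-lem-0-eq-10}''; your version makes that calibration explicit. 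One small inaccuracy: your quadratic-variation bound should carry the factor $T$ in the second summand, i.e.\ $\langle M^{\varphi,N}\rangle_T\leq \|\nabla\varphi\|^2 T\bigl(\phi(0)+\alpha\|h\|_0^T\,\overline Z^N_T\bigr)$, but this does not affect the argument.
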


\begin{proof}
To show the exponential tightness of the sequence  $\left\{\frac{1}{a(N)}M_t^{\varphi, N}, t\in[0,T]\right\}$ on $D([0,T],\mathbb{R})$ for each $\varphi\in C^{1,lip}([0,T]\times \mathbb N)$, it is sufficient to prove that 
\begin{equation}\label{MDP-lem-1-eq-1}
\limsup_{L\rightarrow \infty}\limsup_{N\rightarrow\infty}\frac{1}{a^2(N)}\log\mathbb{P}\left(\sup_{t\in[0,T]}\left|M_t^{\varphi, N}\right|\geq L a(N)\right)=-\infty,
\end{equation}
and for $\varepsilon>0$, 
\begin{equation}\label{MDP-lem-1-eq-2}
\limsup_{\delta\rightarrow 0}\limsup_{N\rightarrow\infty}\frac{1}{a^2(N)}\log\mathbb{P}\left(\sup_{s,t\in[0,T],0\leq|t-s|<\delta}\left|M_t^{\varphi, N}-M_s^{\varphi, N}\right|\geq \varepsilon a(N)\right)=-\infty.
\end{equation}
We only prove \eqref{MDP-lem-1-eq-2} since  the proof  of \eqref{MDP-lem-1-eq-1} is similar. Without loss of generality, we can assume $T/\delta\in\mathbb{N}$ such that
\begin{equation*}
\begin{aligned}
&\mathbb{P}\left(\sup_{s,t\in[0,T],0\leq|t-s|<\delta}\left|M_t^{\varphi, N}-M_s^{\varphi, N}\right|\geq \varepsilon a(N)\right)\\
&\leq \frac{T}{\delta}\sup_{ 0\leq s\leq T-\delta}\mathbb{P}\left(\sup_{ 0\leq t\leq\delta}\left|M_{t+s}^{\varphi,N}-M^{\varphi, N}_{s}\right|\geq \varepsilon a(N)/2\right).
\end{aligned}
\end{equation*}
Therefore, in order to prove \eqref{MDP-lem-1-eq-2}, it is sufficient to prove that for any $\varepsilon>0$,
\begin{equation}\label{MDP-lem-1-eq-3}
\limsup_{\delta\rightarrow0}\limsup_{N\rightarrow\infty}\frac{1}{a^2(N)}\log\sup_{ 0\leq s\leq T-\delta}\mathbb{P}\left(\sup_{ 0\leq t\leq\delta}\left| M_{t+s}^{\varphi,N}-M^{\varphi,N}_{s}\right|\geq \frac{a(N)\varepsilon}{2}\right)=-\infty.
\end{equation}
Recall that $M_t^{\varphi,N}$ is a martingale defined by \eqref{Martingale-eq-1} with the predictable quadratic variation defined by \eqref{Martingale-eq-2}.     Then  
  there exists a constant $C$ such that  $\left|\Delta M^{\varphi,N}\right|\leq \frac{C}{\sqrt{N}}:=c_N$. 
For each $0\leq s\leq T-\delta$, denote by
 $$
 \widetilde{M}_{t}^{\varphi, N,s}:=M^{\varphi, N}_{t+s}-M^{\varphi, N}_{s},\qquad t\in [0,\delta].
 $$
Then  for any  $t\in[0,\delta]$ and $s\in[0,T-\delta]$,
$$
\begin{aligned}
\left\<\widetilde{M}^{\varphi, N,s}\right\>_t =&\frac{1}{N}\sum_{i=1}^N\int_s^{t+s}  \left(\nabla  \varphi\left(v,Z_{v}^{N,i}\right)\right)^2\phi\left(\int_0^{v}h(v-u)d \overline{Z}^{N}_u\right) dv\\
\leq &\delta\|\nabla \varphi\|^2 \left( \phi(0)+\alpha\|h\|_0^T \overline{Z}^{N}_T  \right),
\end{aligned}
$$
and  for any $\theta\geq 0$,
$$
\exp\left\{\frac{\theta}{c_N} \widetilde{M}_{t}^{\varphi, N,s}-g(\theta/c_N)\left\<\widetilde{M}^{\varphi,N,s}\right\>_t\right\},\qquad t\in [0,\delta],
$$
is a positive  supermartingale (cf. Lemma 26.19 in Kallenberg \cite{Kallenberg2002Book}, Proposition 2 in  \cite{ShorackWellnerBook1986} Appendix B).   Following again the proof of \eqref{Basic-lem-0-eq-10},  there exist positive constants $C_1, C_2, \theta_0$ such that for any $\theta\in [0,\theta_0]$,  $N\geq 1$, $\delta \in [0,T]$,
\begin{equation}\label{MDP-lem-1-eq-5}
\begin{aligned}
&\mathbb{E}\left(e^{ \theta  \sqrt{N}\sup_{0\leq t\leq \delta}\left|\widetilde{M}_t^{\varphi,N,s}\right|}\right) \leq  C_1\delta \theta \sqrt{N} e^{C_2 \delta^2\theta^2 N}+1.
\end{aligned}
\end{equation}
 
Therefore, by the Chebyshev's inequality, we have that
\begin{equation*}
\begin{aligned}
&\limsup_{\delta\to 0}\limsup_{N\rightarrow\infty}\frac{1}{a^2(N)}\log\sup_{ 0\leq s\leq T-\delta}\mathbb{P}\left(\sup_{ 0\leq t\leq\delta}\left| M_{t+s}^{\varphi,N}-M^{\varphi,N}_{s}\right|\geq \frac{a(N)\varepsilon}{2}\right)=-\infty,
\end{aligned}
\end{equation*}
and thus  \eqref{MDP-lem-1-eq-3}  holds.   Then, we obtain \eqref{MDP-lem-1-eq-2}. This completes the proof.
\end{proof}

Next, let us provide an estimate that concerns the second term on the right hand side in \eqref{Basic-formula-eq-0}.
 
\begin{lem}\label{MDP-lem-2}
 Suppose (A.1), (A.2)  and   (A.3) hold. For any $\epsilon>0$,
\begin{equation}\label{MDP-lem-2-eq-1}
\begin{aligned}
\limsup_{L\to\infty}\limsup_{N\to\infty}\frac{1}{a^2(N)}\log \mathbb P\bigg(  &\frac{\sqrt{N}}{a(N)}\sup_{0\leq t\leq T} \bigg|
\phi\bigg(\int_0^{t}
h(t-s)d \overline{Z}_s^{N}\bigg)\\
&\quad   -\phi\left(\int_0^{t} h(t-s)d m_s\right)\bigg|  \geq L\bigg)=-\infty.
\end{aligned}
\end{equation}
\end{lem}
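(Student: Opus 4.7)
The plan is to reduce the claim to the superexponential estimate \eqref{Basic-lem-0-eq-5} already established in Lemma~\ref{Basic-lem-0} for the mean process. The reduction proceeds via the Lipschitz property of $\phi$ followed by integration by parts on the Stieltjes integral, exactly as was done in the proof of Lemma~\ref{CLT-thm-lem-2}.

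First, I would observe that for every $t\in[0,T]$,
\begin{equation*}
\left|\phi\!\left(\int_0^t h(t-s)\,d\overline{Z}_s^N\right)-\phi\!\left(\int_0^t h(t-s)\,dm_s\right)\right|\leq \|\phi\|_{lip}\left|\int_0^t h(t-s)\,d\bigl(\overline{Z}_s^N-m_s\bigr)\right|,
\end{equation*}
and then integration by parts together with the local boundedness of $h$ and $h'$ (Assumption (A.1)) yields
\begin{equation*}
\left|\int_0^t h(t-s)\,d\bigl(\overline{Z}_s^N-m_s\bigr)\right|\leq \bigl(h(0)+\|h'\|_{L^1[0,T]}\bigr)\sup_{0\leq s\leq t}\bigl|\overline{Z}_s^N-m_s\bigr|.
\end{equation*}
Setting $K:=\|\phi\|_{lip}\bigl(h(0)+\|h'\|_{L^1[0,T]}\bigr)$ gives the deterministic pathwise bound
\begin{equation*}
\sup_{0\leq t\leq T}\left|\phi\!\left(\int_0^t h(t-s)\,d\overline{Z}_s^N\right)-\phi\!\left(\int_0^t h(t-s)\,dm_s\right)\right|\leq K\sup_{0\leq t\leq T}\bigl|\overline{Z}_t^N-m_t\bigr|.
\end{equation*}

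Consequently, for every $L>0$,
\begin{equation*}
\mathbb{P}\!\left(\frac{\sqrt{N}}{a(N)}\sup_{0\leq t\leq T}\left|\phi\!\left(\int_0^t h(t-s)\,d\overline{Z}_s^N\right)-\phi\!\left(\int_0^t h(t-s)\,dm_s\right)\right|\geq L\right)\leq \mathbb{P}\!\left(\frac{\sqrt{N}}{a(N)}\sup_{0\leq t\leq T}\bigl|\overline{Z}_t^N-m_t\bigr|\geq \frac{L}{K}\right).
\end{equation*}
Applying \eqref{Basic-lem-0-eq-5} from Lemma~\ref{Basic-lem-0} with $L/K$ in place of $L$, and then letting $L\to\infty$, I would immediately obtain \eqref{MDP-lem-2-eq-1}.

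The argument is essentially routine once the correct deterministic bound is in hand; the only potentially delicate point is invoking integration by parts, but Assumption~(A.1) ensures $h(0)$ and $\|h'\|_{L^1[0,T]}$ are finite so no further obstacle arises. No new superexponential martingale estimate is needed for this lemma, since all the work has already been done in Lemma~\ref{Basic-lem-0}.
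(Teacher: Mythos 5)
Your proof is correct and follows essentially the same route as the paper: the Lipschitz bound on $\phi$ together with integration by parts (the same deterministic pathwise estimate already used in Lemma~\ref{CLT-thm-lem-2}) reduces the claim to the event $\sup_{0\leq t\leq T}|\overline{Z}_t^N-m_t|\geq L/K$, and then \eqref{Basic-lem-0-eq-5} from Lemma~\ref{Basic-lem-0} gives the superexponential negligibility.
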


\begin{proof}
Following the proof in Lemma~\ref{CLT-thm-lem-2}, we have
\begin{align*}
&\mathbb P\bigg( \sup_{t\in[0,T]}\bigg| \phi\left(\int_0^{t} h(t-s)d \overline{Z}^{N}_s\right)-\phi\left(\int_0^{t} h(t-s)d m_s\right)\bigg|\geq \epsilon\bigg)
\\
&\leq
\mathbb{P}\left(\|\phi\|_{lip}\left(h(0)+\| h'\|_{L^{1}[0,T]}\right)
\sup_{0\leq t\leq T}\left|\overline{Z}_{t}^{N}-m_{t}\right|
\geq\epsilon\right).
\end{align*}
Then by \eqref{Basic-lem-0-eq-5}, we get \eqref{MDP-lem-2-eq-1}. This completes the proof.
\end{proof}

Now we are finally ready to prove the exponential  tightness of  $\left\{\widetilde{L}_t^N,N\geq 1\right\}$.

\begin{lem}\label{MDP-lem-3}
Suppose (A.1), (A.2)  and   (A.3) hold.  Then $\left\{\widetilde{L}_t^N,N\geq 1\right\}$ is exponentially tight  in  $D([0,T], \mathbb H^{-1}(\mathbb N))$.
\end{lem}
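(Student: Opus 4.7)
The plan is to mirror the proof of Lemma~\ref{CLT-thm-lem-4}, but replacing the in‑probability bounds there by the superexponential bounds already assembled in Lemmas~\ref{Basic-lem-1}, \ref{MDP-lem-1} and \ref{MDP-lem-2}. Because $D([0,T],\mathbb H^{-1}(\mathbb N))$ is equipped with the topology of uniform convergence tested against $\varphi\in C^{lip}(\mathbb N)$, it suffices, by the standard weak*-type exponential tightness criterion, to prove that for each fixed $\varphi\in C^{lip}(\mathbb N)$ both of the following hold:
\begin{equation*}
\limsup_{L\to\infty}\limsup_{N\to\infty}\frac{1}{a^2(N)}\log \mathbb{P}\!\left(\sup_{t\in[0,T]}\bigl|\<\widetilde L_t^N,\varphi\>\bigr|\geq L\right)=-\infty,
\end{equation*}
\begin{equation*}
\limsup_{\delta\to 0}\limsup_{N\to\infty}\frac{1}{a^2(N)}\log \mathbb{P}\!\left(\sup_{\substack{s,t\in[0,T]\\ |t-s|<\delta}}\bigl|\<\widetilde L_t^N-\widetilde L_s^N,\varphi\>\bigr|\geq \varepsilon\right)=-\infty,\quad\forall\varepsilon>0.
\end{equation*}

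The first estimate is immediate: since $\widetilde L_t^N=\widehat L_t^N/a(N)$, the desired bound is exactly \eqref{Basic-lem-1-eq-2} of Lemma~\ref{Basic-lem-1} applied to the test function $\varphi$.

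For the modulus‑of‑continuity estimate, I would take $\varphi\in C^{lip}(\mathbb N)$ time‑independent and use the decomposition \eqref{Basic-formula-eq-0} to write
\begin{equation*}
\<\widetilde L_t^N-\widetilde L_s^N,\varphi\>=A^N_{s,t}+B^N_{s,t}+C^N_{s,t},
\end{equation*}
where $A^N_{s,t}:=\int_s^t\<\widetilde L_u^N,\nabla\varphi\>\phi\bigl(\int_0^u h(u-v)dm_v\bigr)du$, $B^N_{s,t}$ is the $\phi$-difference drift term, and $C^N_{s,t}:=\frac{1}{a(N)}(M_t^{\varphi,N}-M_s^{\varphi,N})$. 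The three terms are then handled in turn. For $C^N_{s,t}$, Lemma~\ref{MDP-lem-1} gives the required superexponential modulus control directly. For $B^N_{s,t}$, bounding the test-function pairing by $\|\nabla\varphi\|_\infty\leq \|\varphi\|_{lip}$ (since $L^N_u$ is a probability measure) yields
\begin{equation*}
\sup_{|t-s|<\delta}|B^N_{s,t}|\leq \delta\,\|\varphi\|_{lip}\sup_{u\in[0,T]}\frac{\sqrt N}{a(N)}\Bigl|\phi\bigl(\tfrac{}{}\bigr)-\phi\bigl(\tfrac{}{}\bigr)\Bigr|,
\end{equation*}
which becomes superexponentially small thanks to Lemma~\ref{MDP-lem-2}. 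For the drift $A^N_{s,t}$, I use that $\nabla\varphi\in C^{lip}(\mathbb N)$ with $\|\nabla\varphi\|_{lip}\leq 2\|\varphi\|_{lip}$; boundedness of $\phi(\int_0^u h(u-v)dm_v)$ on $[0,T]$ then yields
\begin{equation*}
\sup_{|t-s|<\delta}|A^N_{s,t}|\leq \delta\,C\,\sup_{u\in[0,T]}\bigl|\<\widetilde L_u^N,\nabla\varphi\>\bigr|,
\end{equation*}
so the already‑proven a priori bound \eqref{Basic-lem-1-eq-2} applied to $\nabla\varphi$ forces this contribution to be superexponentially small as $\delta\to0$. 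Combining the three estimates and passing first to $N\to\infty$ and then to $\delta\to0$ yields the desired superexponential modulus.

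The main technical point, and the place where care is required, is not any individual computation but the first reduction: verifying that the two scalar superexponential estimates above, established for every $\varphi\in C^{lip}(\mathbb N)$, actually imply exponential tightness in the uniform weak*-topology of $D([0,T],\mathbb H^{-1}(\mathbb N))$. Here I would invoke separability of $C^{lip}(\mathbb N)$ (restricting tests to a countable dense subfamily) together with the fact that the total variation of $\widetilde L^N_t$ is bounded by $2\sqrt N/a(N)$, which itself needs to be sharpened into a superexponential total‑mass bound via \eqref{Basic-lem-1-eq-2} applied to $\varphi(x)=x$ and $\varphi(x)\equiv 1$, so that the image of $\widetilde L^N$ essentially lives in a weak*-compact (hence compact) subset of $\mathbb H^{-1}(\mathbb N)$ off a set of superexponentially small probability. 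This compactness+modulus combination then gives exponential tightness in $D([0,T],\mathbb H^{-1}(\mathbb N))$.
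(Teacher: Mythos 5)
Your core argument reproduces the paper's proof exactly: reduce to the two superexponential estimates on $\sup_t|\<\widetilde L^N_t,\varphi\>|$ and on the modulus of continuity for each fixed $\varphi\in C^{lip}(\mathbb N)$, obtain the first from \eqref{Basic-lem-1-eq-2}, and for the second decompose $\<\widetilde L^N_t-\widetilde L^N_s,\varphi\>$ via \eqref{Basic-formula-eq-0} into the martingale increment (handled by Lemma~\ref{MDP-lem-1}), the $\phi$-difference drift (handled by Lemma~\ref{MDP-lem-2}), and the remaining drift $\int_s^t\<\widetilde L^N_u,\nabla\varphi\>\phi(\cdots)\,du$ (handled by \eqref{Basic-lem-1-eq-2} applied to $\nabla\varphi$). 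That is precisely the paper's argument.

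One factual slip in your closing remarks: $(C^{lip}(\mathbb N),\|\cdot\|_{lip})$ is \emph{not} separable. Indeed $\psi\mapsto(\psi(0),(\psi(n+1)-\psi(n))_{n\ge 0})$ is an isometric isomorphism onto $\mathbb R\times\ell^\infty$, so you cannot invoke a countable dense subfamily of $C^{lip}(\mathbb N)$ to justify the reduction to scalar tests. The reduction is still standard, but the right mechanism is that elements of $\mathbb H^{-1}(\mathbb N)$, once identified with finite signed measures on the countable set $\mathbb N$ via the Riesz representation, are determined by the countable separating family $\{I_{\{n\}}: n\in\mathbb N\}\subset C_0(\mathbb N)\subset C^{lip}(\mathbb N)$, together with an a priori control on the total variation on sets of non‑superexponentially‑small probability (in the spirit of your \eqref{Basic-lem-1-eq-2}). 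The paper takes this reduction for granted without comment, so this is not a gap in your use of the lemmas, only in the justification you proposed for the reduction step.
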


\begin{proof}
To show the exponential tightness of the sequence  $\left\{\widetilde{L}_t^N,N\geq 1\right\}$ on $D([0,T], \mathbb H^{-1}(\mathbb N))$, it is sufficient to prove that for any $\varphi\in C^{lip}(\mathbb{N})$,
\begin{equation}\label{MDP-lem-3-eq-1}
\limsup_{L\rightarrow \infty}\limsup_{N\rightarrow\infty}\frac{1}{a^2(N)}\log\mathbb{P}\left(\sup_{t\in[0,T]}\left|\left\<\widetilde{L}_t^N,\varphi\right\>\right|\geq L\right)=-\infty,
\end{equation}
and for any $\varphi\in C^{lip}(\mathbb{N})$ and $\varepsilon>0$, 
\begin{equation}\label{MDP-lem-3-eq-2}
\limsup_{\delta\rightarrow 0}\limsup_{N\rightarrow\infty}\frac{1}{a^2(N)}\log\mathbb{P}\left(\sup_{s,t\in[0,T],0\leq|t-s|<\delta}\left|\left\<\widetilde{L}_t^N-\widetilde{L}^N_s,\varphi\right\>\right|\geq \varepsilon\right)=-\infty.
\end{equation}

We only show \eqref{MDP-lem-3-eq-2} and the proof for \eqref{MDP-lem-3-eq-1} is similar. 
By  \eqref{Basic-formula-eq-0},  Lemma~\ref{MDP-lem-1} and Lemma~\ref{MDP-lem-2}, it suffices to show  that for any  $\varphi\in C^{lip}(\mathbb N)$,
\begin{equation}\label{MDP-lem-3-eq-5}
\begin{aligned}
\limsup_{M\rightarrow \infty}\limsup_{N\rightarrow\infty}\frac{1}{a^2(N)}\log  \mathbb{P}\left(\sup_{ 0\leq t\leq T} \left|\left\<\widetilde{L}_t^N,\nabla\varphi\right\>\right|\geq M\right)=-\infty.
\end{aligned}
\end{equation}
In fact, \eqref{MDP-lem-3-eq-5}  is just \eqref{Basic-lem-1-eq-2}. 
 Thus,   $\left\{\widetilde{L}_t^N,N\geq 1\right\}$ is exponentially tight  in  $D([0,T], \mathbb H^{-1}(\mathbb N))$ and the proof is complete.
\end{proof}

\subsection{Upper bounds}\label{upper:bound:MDP}
In this subsection,  we show the upper bound of the moderate deviation principle.

Define the exponential local martingale associated with $\<L_t^N,\varphi(t)\>$:
\begin{equation}\label{exp-martingale-eq-1}
\begin{aligned}
{\mathcal E}_t^{N,\varphi}:=&\exp\bigg\{N\left\<L^N_t,\varphi(t)\right\>-N\left\<L^N_0,\varphi(0)\right\>-N\int_0^t\left\<L_s^N,\partial_s \varphi(s)\right\>   ds\\
&-N\int_0^t\left\<L_s^N, \nabla \varphi(s)\right\>   \phi\left(\int_0^{s}h(s-u)d \overline{Z}^{N}_u\right)  ds\\
&\quad - N\int_0^t\left\<L_s^N,e^{\nabla \varphi(s)}-1-\nabla \varphi(s)\right\>  \phi\left(\int_0^{s-}
h(s-u)d \overline{Z}^{N}_u\right) ds\bigg\}\\
=&\exp\bigg\{N\left\<L^N_t,\varphi(t)\right\>-N\left\<L^N_0,\varphi(0)\right\>-N\int_0^t\left\<L_s^N,\partial_s \varphi(s)\right\>   ds\\
&\quad\quad -  N\int_0^t\left\<L_s^N,e^{\nabla \varphi(s)}-1\right\>  \phi\left(\int_0^{s}
h(s-u)d \overline{Z}^{N}_u\right) ds\bigg\}.
\end{aligned}
\end{equation}
If (A.1) and (A.2) hold,
when  $ \|\varphi\|_{lip}$ is sufficiently small, then $\mathcal E_t^{N,\varphi},~t\in [0,T]$ is a martingale.
 Then we denote by
 \begin{equation}\label{mdp-exp-martingale-eq-2}
 \widetilde{\mathcal E}_t^{N,\varphi}:={\mathcal E}_t^{N,\frac{a(N)}{\sqrt{N}}(\varphi(t)-\<\mathcal L_t,\varphi(t)\>)}.
 \end{equation}
 That is, 
  \begin{equation}\label{mdp-exp-martingale-eq-3}
\begin{aligned}
&\widetilde{\mathcal E}_t^{N,\varphi}
=\exp\bigg\{a^2(N)\left\< \widetilde{L}^N_t,\varphi(t)\right\>-a^2(N)\int_0^t\left\< \widetilde{L}_{s}^N,\partial_s \varphi(s)\right\>   ds\\
&\quad -a^2(N)\int_0^t\left\< \widetilde{L}_s^N, \nabla \varphi(s)\right\>   \phi\left(\int_0^s
h(s-u)d m_u\right)  ds\\
&\quad - a^2(N)\int_0^t\left\<  L_s^N, \nabla \varphi(s)\right\>   \frac{\sqrt{N}}{a(N)} \left(\phi\left(\int_0^s
h(s-u)d \overline{Z}^{N}_u\right) -\phi\left(\int_0^s
h(s-u)d m_u\right) \right) ds\\
&\quad - a^2(N)\int_0^t\frac{N}{a^2(N)}\bigg\<  {L}_s^N,e^{\frac{a(N)}{\sqrt{N}}\nabla \varphi(s)}-1-\frac{a(N)}{\sqrt{N}}\nabla \varphi(s)\bigg\>  \phi\left(\int_0^{s}h(s-u)d \overline{Z}^{N}_u\right) ds\bigg\}\,.\\
\end{aligned}
\end{equation}

 We first give  some approximations of the terms in  ${\mathcal E}_T^{N,\varphi}$.

\begin{lem}\label{MDP-lem-4}
Suppose (A.1), (A.2)  and   (A.3) hold. For any $\epsilon>0$,
\begin{equation}\label{MDP-lem-4-eq-1}
\begin{aligned}
&\limsup_{N\rightarrow\infty}\frac{1}{a^{2}(N)}
\log\mathbb P\bigg(\bigg|\int_0^T\left\<  L_t^N, \nabla \varphi(t)\right\>  \frac{\sqrt{N}}{a(N)}\bigg( \phi\left(\int_0^{t} h(t-s)d \overline{Z}^{N}_s\right)\\
&\quad\quad \quad\quad \quad\quad \quad\quad \quad\quad -\phi\left(\int_0^{t} h(t-s)d m_s\right)\bigg)dt\\
&\qquad\qquad-\int_0^T\left\< \mathcal L_t, \nabla \varphi(t)\right\>  \phi'\bigg(\int_0^{t} h(t-s)d m_s\bigg)  \int_0^{t} h(t-s) d\left\<\widetilde{L}_s^N,\ell\right\>dt\bigg|\geq \epsilon\bigg)\\
&
=-\infty.
\end{aligned}.
\end{equation}
\end{lem}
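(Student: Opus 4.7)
The plan is to establish this linearization-at-MDP-scale result in two steps: (a) replace $\phi\bigl(\int_0^t h(t-s)\,d\overline Z_s^N\bigr)-\phi\bigl(\int_0^t h(t-s)\,dm_s\bigr)$ by its Taylor linearization around $Y_t:=\int_0^t h(t-s)\,dm_s$, and (b) replace $L_t^N$ by $\mathcal L_t$ in the resulting linear term. Write $X_t^N:=\int_0^t h(t-s)\,d\overline Z_s^N$ and $C_h:=h(0)+\|h'\|_{L^1[0,T]}$. By Taylor's formula,
\begin{equation*}
\phi(X_t^N)-\phi(Y_t)=\phi'(Y_t)(X_t^N-Y_t)+R_t^N,\qquad |R_t^N|\leq V_{\phi'}\bigl(|X_t^N-Y_t|\bigr)\,|X_t^N-Y_t|,
\end{equation*}
with $V_{\phi'}$ as in \eqref{V:eqn}. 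The integration-by-parts identity used in Lemma~\ref{CLT-thm-lem-3} gives $|X_t^N-Y_t|\leq C_h\sup_{0\leq s\leq t}|\overline Z_s^N-m_s|$, and by the definition of $\widetilde L_s^N$ and $\ell(x)=x$ we have $\tfrac{\sqrt N}{a(N)}(X_t^N-Y_t)=\int_0^t h(t-s)\,d\langle\widetilde L_s^N,\ell\rangle$, so the target quantity splits into a Taylor-remainder term and an $(L_t^N-\mathcal L_t)$-replacement term.

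For the Taylor remainder, the contribution to the quantity inside the probability is bounded uniformly in $t\in[0,T]$ by
\begin{equation*}
\|\nabla\varphi\|\,T\,C_h\,V_{\phi'}\!\bigl(C_h\sup_{0\leq s\leq T}|\overline Z_s^N-m_s|\bigr)\,\tfrac{\sqrt N}{a(N)}\sup_{0\leq s\leq T}|\overline Z_s^N-m_s|.
\end{equation*}
On the event $A_L^N:=\{\tfrac{\sqrt N}{a(N)}\sup_{0\leq s\leq T}|\overline Z_s^N-m_s|\leq L\}$ this is deterministically at most $\|\nabla\varphi\|\,TC_hL\,V_{\phi'}\!\bigl(C_hL\,a(N)/\sqrt N\bigr)$, which tends to $0$ as $N\to\infty$ for every fixed $L$ since $a(N)/\sqrt N\to 0$ and $V_{\phi'}(\delta)\to 0$ as $\delta\to 0$ by~(A.3); the probability of the complement $(A_L^N)^c$ is superexponentially small at speed $a^2(N)$ as $L\to\infty$ by \eqref{Basic-lem-0-eq-5}, so a standard $\{\text{good event}\}\cup\{\text{bad event}\}$ split handles this contribution.

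For step (b), the residual from replacing $L_t^N$ by $\mathcal L_t$ equals
\begin{equation*}
\tfrac{a(N)}{\sqrt N}\int_0^T\langle\widetilde L_t^N,\nabla\varphi(t)\rangle\,\phi'(Y_t)\int_0^t h(t-s)\,d\langle\widetilde L_s^N,\ell\rangle\,dt,
\end{equation*}
using $L_t^N-\mathcal L_t=\tfrac{a(N)}{\sqrt N}\widetilde L_t^N$. Integration by parts (together with $\langle\widetilde L_0^N,\ell\rangle=0$) bounds the inner integral by $C_h\sup_{0\leq s\leq T}|\langle\widetilde L_s^N,\ell\rangle|$, and $|\phi'|\leq\alpha$ by~(A.2), so the absolute value of this residual is at most $\alpha C_hT\,\tfrac{a(N)}{\sqrt N}\sup_t|\langle\widetilde L_t^N,\nabla\varphi(t)\rangle|\sup_s|\langle\widetilde L_s^N,\ell\rangle|$. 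On the event where both suprema are bounded by $L$ this is $O(a(N)/\sqrt N)\to 0$; the exceptional event has probability superexponentially small at speed $a^2(N)$ as $L\to\infty$ by \eqref{Basic-lem-1-eq-2} applied to $\ell$ and to $\nabla\varphi(t)$.

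The main obstacle will be the uniform-in-$t$ exponential bound on $\sup_t|\langle\widetilde L_t^N,\nabla\varphi(t)\rangle|$, since Lemma~\ref{Basic-lem-1} is stated for a single test function $\psi\in C^{lip}(\mathbb N)$. Since $\sup_{t\in[0,T]}\|\nabla\varphi(t)\|_{lip}<\infty$ and the constants in \eqref{Basic-lem-1-eq-1} depend on $\psi$ only through $\|\psi\|_{lip}$, this gap can be closed by applying Lemma~\ref{Basic-lem-1} on a finite mesh in time and combining with the equicontinuity-in-time estimates already established in the proof of Lemma~\ref{MDP-lem-3}.
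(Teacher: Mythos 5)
Your proposal takes essentially the same approach as the paper: the paper also decomposes the target into a Taylor-remainder term (its equation \eqref{MDP-lem-4-eq-3}, proved exactly via \eqref{V:eqn}, \eqref{Basic-lem-0-eq-3}, and \eqref{Basic-lem-0-eq-5}) plus an $L^N\to\mathcal L$ replacement term (its equation \eqref{MDP-lem-4-eq-2}, derived from \eqref{Basic-lem-1-eq-2}), and combines the two. Your rewriting of the replacement residual as $\tfrac{a(N)}{\sqrt{N}}\int_0^T\langle\widetilde L_t^N,\nabla\varphi(t)\rangle\phi'(Y_t)\int_0^t h(t-s)\,d\langle\widetilde L_s^N,\ell\rangle\,dt$ with the harmless $a(N)/\sqrt{N}\to 0$ factor is just a slightly different bookkeeping of the same split. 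Your observation about the time-dependent test function $\nabla\varphi(t)$ is a genuine point: the paper applies \eqref{Basic-lem-1-eq-2}, stated for $\psi\in C^{lip}(\mathbb N)$, to the family $\nabla\varphi(t)$, which is not literally covered by Lemma~\ref{Basic-lem-1}. The paper glosses over this; it can be repaired either by your mesh-plus-equicontinuity argument, or (more directly) by re-running the proof of Lemma~\ref{Basic-lem-1} with $\varphi\in C^{1,lip}([0,T]\times\mathbb N)$, which merely adds one extra Hoeffding-type term for the $\partial_t\varphi$ contribution in the It\^o formula \eqref{Basic-lem-1-eq-9}. Either fix is routine, and your identification of the gap is correct and useful.
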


\begin{proof} By \eqref{Basic-lem-1-eq-2} and the Chebyshev's inequality,   
\begin{equation}\label{MDP-lem-4-eq-2}
\limsup_{N\rightarrow\infty}\frac{1}{a^{2}(N)}
\log\mathbb P\left(\sup_{t\in [0,T]}\left|\left\<  L_t^N, \nabla \varphi(t)\right\>-\left\< \mathcal L_t, \nabla \varphi(t)\right\>  \right|\geq \epsilon\right)
=-\infty.
\end{equation}
Following the proof of Lemma~\ref{CLT-thm-lem-3},  for any $\delta>0$,
$$
\begin{aligned}
&\mathbb P\bigg(\sup_{t\in[0,T]}\frac{\sqrt{N}}{a(N)}\bigg|  \phi\left(\int_0^{t} h(t-s)d \overline{Z}^{N}_s\right)-\phi\left(\int_0^{t} h(t-s)d m_s\right)\\
&\qquad\qquad-\phi'\left(\int_0^{t} h(t-s)d m_s\right)  \int_0^{t} h(t-s)d \left(\overline{Z}^{N}_s- m_s\right)\bigg|\geq \epsilon\bigg)
\\
\leq &\mathbb{P}\left(\left(h(0)+\| h'\|_{L^{1}[0,T]}\right)\sup_{0\leq s\leq T}\left|\overline{Z}_{s}^{N}-m_{s}\right|
\geq \delta\right)
\\
&+
\mathbb{P}\left(V_{\phi'}(\delta)   \left(h(0)+\| h'\|_{L^{1}[0,T]}\right)\frac{\sqrt{N}}{a(N)}\sup_{0\leq s\leq T}\left|\overline{Z}_{s}^{N}-m_{s}\right|
\geq \epsilon\right),
\end{aligned}
$$
where $V_{\phi'}(\delta)$ is defined in \eqref{V:eqn}.
By \eqref{Basic-lem-0-eq-3},
$$
\begin{aligned}
&\limsup_{N\to\infty}\frac{1}{a^2(N)}\log\mathbb{P}\left(\left(h(0)+\| h'\|_{L^{1}[0,T]}\right)\sup_{0\leq s\leq T}\left|\overline{Z}_{s}^{N}-m_{s}\right|
\geq \delta\right)=-\infty,
\end{aligned}
$$
 and noting $V_{\phi'}(\delta)\to 0$ as $\delta\to 0$, we also have
$$
\begin{aligned}
&\limsup_{\delta\to 0}\limsup_{N\to\infty}\frac{1}{a^2(N)}\mathbb{P}\left(V_{\phi'}(\delta)   \left(h(0)+\| h'\|_{L^{1}[0,T]}\right)\frac{\sqrt{N}}{a(N)}\sup_{0\leq s\leq T}\left|\overline{Z}_{s}^{N}-m_{s}\right|
\geq \epsilon\right)=-\infty.
\end{aligned}
$$
Therefore,
\begin{equation}\label{MDP-lem-4-eq-3}
\begin{aligned}
\lim_{N\to\infty}\frac{1}{a^2(N)}\log \mathbb P\bigg(& \sup_{0\leq t\leq T}\bigg|\frac{\sqrt{N}}{a(N)}\left(\phi\left(\int_0^{t}
h(t-s)d \overline{Z}_s^N\right)
-\phi\left(\int_0^{t}
h(t-s)d m_s\right)\right)\\
&-  \phi'\left(\int_0^{t} h(t-s)d m_s\right)  \int_0^{t}
h(t-s)d \left\<\widetilde{L}_s^N, \ell\right\>\bigg|\geq \epsilon\bigg)=-\infty.
\end{aligned}
\end{equation}
 Now, by \eqref{MDP-lem-4-eq-2} and \eqref{MDP-lem-4-eq-3}, we obtain \eqref{MDP-lem-4-eq-1}. This completes the proof.
\end{proof}

\begin{lem}\label{MDP-lem-5}
Suppose (A.1), (A.2)  and   (A.3) hold. For any $\epsilon>0$,
$$
\begin{aligned}
\lim_{N\to\infty}\frac{1}{a^2(N)}\log \mathbb P\bigg(  &\bigg|
\frac{N}{a^2(N)} \int_0^T\bigg\<  {L}_t^N,e^{\frac{a(N)}{\sqrt{N}}\nabla \varphi(t)}-1-\frac{a(N)}{\sqrt{N}}\nabla \varphi(t)\bigg\>  \phi\left(\int_0^{t}
h(t-s)d \overline{Z}^{N}_s\right)dt\\
&\quad\quad  - \frac{1}{2}\int_0^T \<  \mathcal L_t, (\nabla \varphi(t))^2\>  \phi\left(\int_0^{t}
h(t-s)d m_s\right) dt \bigg|\geq \epsilon\bigg)=-\infty.
\end{aligned}
$$
\end{lem}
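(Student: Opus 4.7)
The plan is to use a second-order Taylor expansion of $e^u-1-u$ at $u=0$, separating the quadratic leading term from a cubic remainder, and then to replace $L_t^N$ by $\mathcal L_t$ and $\phi(\int_0^t h(t-s)d\overline{Z}^N_s)$ by $\phi(\int_0^t h(t-s)dm_s)$ in the leading term, with each of the three resulting errors shown to be super-exponentially small in $a^2(N)$.

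Concretely, set $u_N(t,y):=\tfrac{a(N)}{\sqrt N}\nabla\varphi(t,y)$. Since $\|\nabla\varphi\|<\infty$ and $a(N)/\sqrt N\to 0$, we have $\|u_N\|\to 0$, and the elementary inequality $|e^u-1-u-u^2/2|\leq C|u|^3$ valid for bounded $|u|$ yields
$$
\bigg|\frac{N}{a^2(N)}\big(e^{u_N(t,y)}-1-u_N(t,y)\big)-\tfrac12(\nabla\varphi(t,y))^2\bigg|\leq C\frac{a(N)}{\sqrt N}\|\nabla\varphi\|^3
$$
for $N$ large. Integrating against $\<L_t^N,\cdot\>\,\phi(\int_0^t h(t-s)d\overline{Z}^N_s)\,dt$, the cubic remainder is deterministically bounded by $C\frac{a(N)}{\sqrt N}\int_0^T\phi(\cdots)\,dt\leq C\frac{a(N)}{\sqrt N}(1+\overline{Z}_T^N)$. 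The uniform exponential-moment estimate \eqref{Basic-lem-0-eq-2} on $\overline{Z}_T^N$ together with the Chebyshev inequality then shows that the probability that this remainder exceeds any $\epsilon>0$ decays super-exponentially in $a^2(N)$.

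It remains to approximate the quadratic term $\frac12\int_0^T\<L_t^N,(\nabla\varphi(t))^2\>\,\phi(\int_0^t h(t-s)d\overline{Z}^N_s)\,dt$ by $\frac12\int_0^T\<\mathcal L_t,(\nabla\varphi(t))^2\>\,\phi(\int_0^t h(t-s)dm_s)\,dt$. I would split the difference as
\begin{align*}
&\int_0^T\<L_t^N-\mathcal L_t,(\nabla\varphi(t))^2\>\,\phi\bigg(\int_0^t h(t-s)d\overline{Z}^N_s\bigg)dt\\
&\quad+\int_0^T\<\mathcal L_t,(\nabla\varphi(t))^2\>\bigg[\phi\bigg(\int_0^t h(t-s)d\overline{Z}^N_s\bigg)-\phi\bigg(\int_0^t h(t-s)dm_s\bigg)\bigg]dt.
\end{align*}
Using the identity $L_t^N-\mathcal L_t=\tfrac{a(N)}{\sqrt N}\widetilde L_t^N$ extracts a prefactor $a(N)/\sqrt N\to 0$ from the first summand; combined with the super-exponential $O(1)$ bound on $\sup_t|\<\widetilde L_t^N,(\nabla\varphi(t))^2\>|$ (via Lemma~\ref{Basic-lem-1}, applied as in the proof of Lemma~\ref{MDP-lem-4} after a time-discretization that exploits the uniform continuity of $(\nabla\varphi(t))^2$) and the super-exponential boundedness of $\sup_t\phi(\int d\overline{Z}^N)$ (from Lemma~\ref{Basic-lem-0}), the first summand is super-exponentially negligible. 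For the second summand, $|\<\mathcal L_t,(\nabla\varphi(t))^2\>|\leq\|\nabla\varphi\|^2$ reduces it to controlling $\sup_t|\phi(\int d\overline{Z}^N)-\phi(\int dm)|$, which the Lipschitz continuity of $\phi$ together with \eqref{Basic-lem-0-eq-5} forces to vanish super-exponentially.

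The main obstacle is simultaneously controlling the Taylor remainder and the unbounded factor $\phi(\int h\,d\overline{Z}^N)$ inside the integral; this is precisely what forces us to rely on the uniform-in-$N$ exponential moment bound for $\overline{Z}_T^N$ from Lemma~\ref{Basic-lem-0}(1). A secondary technical wrinkle---that $(\nabla\varphi(t))^2$ depends on $t$, whereas Lemma~\ref{Basic-lem-1} is stated for time-independent test functions---is resolved by partitioning $[0,T]$ finely and using the uniform continuity of $\nabla\varphi$ in $t$, paralleling the argument already employed in the proof of Lemma~\ref{MDP-lem-4}.
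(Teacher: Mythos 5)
Your overall architecture — Taylor-expand $e^u-1-u$ to second order, split off a cubic remainder, then replace $L_t^N$ by $\mathcal L_t$ and $\phi(\int h\,d\overline{Z}^N)$ by $\phi(\int h\,dm)$ — matches the paper's proof in all essential respects; the paper merely performs the two replacements in the opposite order, using $\tfrac12\int_0^T\<L_t^N,(\nabla\varphi(t))^2\>\phi(\int_0^t h(t-s)dm_s)dt$ as the intermediate pivot rather than $\tfrac12\int_0^T\<\mathcal L_t,(\nabla\varphi(t))^2\>\phi(\int_0^t h(t-s)d\overline{Z}^N_s)dt$, and it invokes \eqref{Basic-lem-1-eq-2} (for the $L^N\to\mathcal L$ step) and Lemma~\ref{MDP-lem-2} (for the $\phi(\overline{Z}^N)\to\phi(m)$ step).

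However, there is a genuine gap in your treatment of the cubic remainder. You bound it by $C\tfrac{a(N)}{\sqrt N}(1+\overline{Z}^N_T)$ and then claim that \eqref{Basic-lem-0-eq-2} plus Chebyshev gives super-exponential decay at speed $a^2(N)$. This is false in general: \eqref{Basic-lem-0-eq-2} only gives $\sup_N\mathbb E\bigl(e^{\hat\theta_0\overline{Z}^N_T}\bigr)=:M<\infty$, so Chebyshev yields $\mathbb P\bigl(\overline{Z}^N_T\geq r\bigr)\leq Me^{-\hat\theta_0 r}$ with no helpful $N$-dependence, and therefore $\tfrac1{a^2(N)}\log\mathbb P\bigl(\tfrac{a(N)}{\sqrt N}(1+\overline{Z}^N_T)\geq\epsilon\bigr)\lesssim -\tfrac{\hat\theta_0\epsilon}{C}\tfrac{\sqrt N}{a^3(N)}$. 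The hypotheses $a(N)\to\infty$ and $a(N)/\sqrt N\to 0$ do not force $\sqrt N/a^3(N)\to\infty$ — take $a(N)=N^{1/4}$, which gives $\sqrt N/a^3(N)=N^{-1/4}\to 0$ — so the displayed upper bound does not go to $-\infty$. Two fixes are available within the paper's toolkit: either (i) invoke the scaled bound \eqref{Basic-lem-0-eq-1}, $\mathbb E\bigl(e^{\theta N\overline{Z}^N_T}\bigr)\leq e^{CN\theta}$, which by Chebyshev with parameter $\theta N$ improves the exponent to $-(\sqrt N/a(N))^3\to -\infty$; or (ii) follow the paper and split $\phi\bigl(\int h\,d\overline{Z}^N\bigr)=\phi\bigl(\int h\,dm\bigr)+\bigl[\phi\bigl(\int h\,d\overline{Z}^N\bigr)-\phi\bigl(\int h\,dm\bigr)\bigr]$, so that the cubic remainder pairs with a deterministic bounded factor (giving a term that vanishes deterministically since $a(N)/\sqrt N\to 0$) plus a factor that is super-exponentially negligible by Lemma~\ref{MDP-lem-2}. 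The same caution applies to your part (a), where you invoke "super-exponential boundedness of $\sup_t\phi(\int h\,d\overline{Z}^N)$": this is true, but only after the same centering around $\phi(\int h\,dm)$, not directly from the $O(1)$ exponential moment \eqref{Basic-lem-0-eq-2}.
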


\begin{proof}
Note that $|e^\theta-1-\theta-\frac{\theta^2}{2}| \leq |\theta|^3$ for any $|\theta|\leq 1/2$, and  $\frac{a(N)}{\sqrt{N}}\|\nabla\varphi\|\to 0$ as $N\to\infty$.
There exists $N_0\geq 1$ such that for any $N\geq N_0$,
\begin{equation*}
\begin{aligned}
&\bigg|
\frac{N}{a^2(N)} \int_0^T\bigg\<  {L}_t^N,e^{\frac{a(N)}{\sqrt{N}}\nabla \varphi(t)}-1-\frac{a(N)}{\sqrt{N}}\nabla \varphi(t)\bigg\>  \phi\left(\int_0^{t}
h(t-s)d \overline{Z}^{N}_s\right)\\
&\qquad  - \frac{1}{2}\int_0^T\<  {L}_t^N, (\nabla \varphi(t))^2\>  \phi\left(\int_0^{t}
h(t-s)d m_s\right) dt \bigg|\\
&\leq T\left(\|\nabla \varphi\|^2+\frac{a(N)}{\sqrt{N}}\|\nabla\varphi\|^3\right)\sup_{t\in[0,T]}\left|\phi\left(\int_0^{t}
h(t-s)d m_s\right)-\phi\left(\int_0^{t}h(t-s)d \overline{Z}^{N}_s\right)\right|\\
&\qquad\qquad  +\frac{a(N)}{\sqrt{N}}\ \|\nabla\varphi\|^3  \sup_{t\in[0,T]}\left|\phi\left(\int_0^{t}h(t-s)dm_s\right)\right|. 
\end{aligned}
\end{equation*}
It is obvious that
$$
\frac{a(N)}{\sqrt{N}}\ \|\nabla\varphi\|^3  \sup_{t\in[0,T]}\left|\phi\left(\int_0^{t}h(t-s)dm_s\right)\right|\to 0 \mbox{ as }N\to\infty.
$$
By \eqref{Basic-lem-1-eq-2} and the Chebyshev's inequality,   
\begin{equation}\label{MDP-lem-5-eq-2}
\limsup_{N\rightarrow\infty}\frac{1}{a^{2}(N)}
\log\mathbb P\left(\sup_{t\in [0,T]}\left|\left\<  L_t^N, (\nabla \varphi(t))^2\right\>-\left\< \mathcal L_t, (\nabla \varphi(t))^2\right\>  \right|\geq \epsilon\right)
=-\infty.
\end{equation}

By Lemma~\ref{MDP-lem-2}, we have
\begin{equation}\label{MDP-lem-5-eq-3}
\begin{aligned}
\lim_{N\to\infty}\frac{1}{a^2(N)}\log\mathbb P\bigg( \sup_{t\in[0,T]}\left|\phi\left(\int_0^{t}
h(t-s)d m_s\right)-\phi\left(\int_0^{t-}
h(t-s)d \overline{Z}^{N}_s\right)\right|\geq \epsilon\bigg)  =-\infty.
\end{aligned}
\end{equation}
Therefore,  by  \eqref{MDP-lem-5-eq-2} and    \eqref{MDP-lem-5-eq-3},  the conclusion of the lemma holds.
\end{proof}

Now, we prove the upper bound of the moderate deviation principle.

\begin{thm}\label{Up-bound-thm}
Suppose (A.1), (A.2) and (A.3) hold. Then for any closed subset $C\subset D([0,T], \mathbb H^{-1}(\mathbb N))$,
\begin{equation}\label{Up-bound-thm-eq-1}
\limsup_{N\rightarrow\infty}\frac{1}{a^2(N)}\log\mathbb{P}\left(\widetilde{L}^N\in C\right)\leq -\inf_{\mu\in C}I(\mu).
\end{equation}
\end{thm}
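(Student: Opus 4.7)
The plan is a standard exponential-tilting plus minimax argument. For any $\varphi\in C^{1,lip}([0,T]\times\mathbb N)$, the exponential local martingale $\widetilde{\mathcal E}_t^{N,\varphi}$ defined in \eqref{mdp-exp-martingale-eq-3} is, for $N$ large (since $a(N)/\sqrt N\to 0$), a bona fide positive martingale with $\mathbb{E}[\widetilde{\mathcal E}_T^{N,\varphi}]=1$. First I would use Lemmas~\ref{MDP-lem-4} and~\ref{MDP-lem-5} to establish the super-exponential approximation
\begin{equation*}
\limsup_{N\to\infty}\frac{1}{a^2(N)}\log\mathbb{P}\left(\left|\tfrac{1}{a^2(N)}\log\widetilde{\mathcal E}_T^{N,\varphi}-J_{\widetilde L^N}(\varphi)\right|\geq\epsilon\right)=-\infty,\qquad \epsilon>0.
\end{equation*}
This follows by inspection of \eqref{mdp-exp-martingale-eq-3}: Lemma~\ref{MDP-lem-4} linearizes the term $(\sqrt N/a(N))[\phi(\cdots\overline Z^N)-\phi(\cdots m)]$ into $\phi'(\cdots m)\int h(t-s)\,d\langle\widetilde L^N_s,\ell\rangle$, and Lemma~\ref{MDP-lem-5} reduces the Poisson compensator $(N/a^2(N))\langle L^N,e^{(a(N)/\sqrt N)\nabla\varphi}-1-(a(N)/\sqrt N)\nabla\varphi\rangle\phi$ to the quadratic term $\tfrac{1}{2}\langle\mathcal L,(\nabla\varphi)^2\rangle\phi$, yielding precisely $J_{\widetilde L^N}(\varphi)$ from \eqref{mdp-rate-function-eq-1}--\eqref{mdp-rate-function-eq-2}.

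Next, on the good event $A_\epsilon^N:=\{|a^{-2}(N)\log\widetilde{\mathcal E}_T^{N,\varphi}-J_{\widetilde L^N}(\varphi)|\leq\epsilon\}$, the Markov-type bound against $\widetilde{\mathcal E}_T^{N,\varphi}$ (combined with $\mathbb{E}[\widetilde{\mathcal E}_T^{N,\varphi}]=1$) gives
\begin{equation*}
\mathbb{P}(\widetilde L^N\in C,\,A_\epsilon^N)\leq e^{-a^2(N)\left(\inf_{\mu\in C}J_\mu(\varphi)-\epsilon\right)},
\end{equation*}
so splitting $\mathbb{P}(\widetilde L^N\in C)\leq\mathbb{P}((A_\epsilon^N)^c)+\mathbb{P}(\widetilde L^N\in C,\,A_\epsilon^N)$ and using Step~1 yields
\begin{equation*}
\limsup_{N\to\infty}\frac{1}{a^2(N)}\log\mathbb{P}(\widetilde L^N\in C)\leq -\inf_{\mu\in C}J_\mu(\varphi)+\epsilon.
\end{equation*}

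To upgrade the $\sup_\varphi\inf_{\mu\in C}J_\mu(\varphi)$ that one obtains after sending $\epsilon\downarrow 0$ and varying $\varphi$ into $\inf_{\mu\in C}\sup_\varphi J_\mu(\varphi)=\inf_{\mu\in C}I(\mu)$, I would invoke the exponential tightness of Lemma~\ref{MDP-lem-3}. For each $L>0$, pick a compact $K_L\subset D([0,T],\mathbb H^{-1}(\mathbb N))$ with $\mathbb{P}(\widetilde L^N\notin K_L)\leq e^{-La^2(N)}$, split $\mathbb{P}(\widetilde L^N\in C)\leq\mathbb{P}(\widetilde L^N\in C\cap K_L)+e^{-La^2(N)}$, and cover $C\cap K_L$ as follows: for each $\mu\in C\cap K_L$, choose $\varphi_\mu\in C^{1,lip}([0,T]\times\mathbb N)$ with $J_\mu(\varphi_\mu)\geq\min(I(\mu),L)-\delta$, and a weak-$*$ open neighborhood $V_\mu$ of $\mu$ on which $\nu\mapsto J_\nu(\varphi_\mu)\geq J_\mu(\varphi_\mu)-\delta$. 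Extract a finite subcover $\{V_{\mu_i}\}_{i=1}^{k}$ of $C\cap K_L$, apply the Chernoff bound of the previous paragraph to each closed piece $\overline{V_{\mu_i}}\cap C\cap K_L$ with test function $\varphi_{\mu_i}$, combine the finitely many resulting bounds, and let $\delta\to 0$ and then $L\to\infty$ to obtain \eqref{Up-bound-thm-eq-1}.

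The main obstacle will be the weak-$*$ continuity of $\nu\mapsto J_\nu(\varphi)$ in the covering step. It is delicate because the fourth term of $\Upsilon_\nu(\varphi)$ contains the Stieltjes integral $\int_0^t h(t-s)\,d\langle\nu_s,\ell\rangle$ with $\ell(x)=x$; although $\|\ell\|_{lip}=1$ makes $\ell$ a legitimate element of $C^{lip}(\mathbb N)$, the $ds$-differential must be rewritten by integration by parts as $h(0)\langle\nu_t,\ell\rangle+\int_0^t h'(t-s)\langle\nu_s,\ell\rangle\,ds$, which requires assumption (A.1) in order to expose continuous dependence on the sample path $s\mapsto\langle\nu_s,\ell\rangle$ in the uniform topology on $D([0,T],\mathbb H^{-1}(\mathbb N))$. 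Once this regularity is verified, the covering step closes and the upper bound follows.
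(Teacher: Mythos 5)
Your proposal is correct and follows essentially the same route as the paper's proof: exponential tightness via Lemma~\ref{MDP-lem-3} to pass to a compact set, a finite open cover with one test function $\varphi_\mu$ per point $\mu$, a Chernoff bound against the exponential martingale $\widetilde{\mathcal E}_T^{N,\varphi_\mu}$, and super-exponential control of the two approximation errors via Lemmas~\ref{MDP-lem-4} and~\ref{MDP-lem-5} (the paper collects these as the events $A_1(\varphi,\epsilon)$ and $A_2(\varphi,\epsilon)$ in \eqref{A1-set}--\eqref{A2-set}, which are precisely the complement of your ``good event'' $A_\epsilon^N$ up to a factor of $2$ in $\epsilon$). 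Your observation about the need for weak-$*$ continuity of $\nu\mapsto J_\nu(\varphi_\mu)$ via integration by parts of $\int_0^t h(t-s)\,d\langle\nu_s,\ell\rangle$ under (A.1) is the right reason the open neighborhoods $G_\mu$ in the paper can be chosen, and it matches the paper's (implicit) use of this regularity.
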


\begin{proof} For each $L>0$, by Lemma~\ref{MDP-lem-3}, we can  choose a compact set $K_L \subset D([0,T], \mathbb H^{-1}(\mathbb N))$ such that
\begin{equation}\label{Up-bound-thm-eq-2}
\limsup_{N\rightarrow\infty}\frac{1}{a^2(N)}\log\mathbb{P}\left(\widetilde{L}^N\in K_L^c\right)\leq -L.
\end{equation}
Set $C_L:=C\cap K_L$. Then $C_L$ is compact. Define
$$
I(C_L)=\inf_{\mu\in C_L}I(\mu)=\inf_{\mu\in C_L}\sup\left\{J_\mu(\varphi):\varphi\in C^{1,lip}([0,T]\times\mathbb{N})\right\}. 
$$
Then for any $\mu\in C_L$ and $\epsilon>0$, there exists    $\varphi_{\mu}\in C^{1,lip}([0,T]\times\mathbb{N})$ such that
$$
J_\mu(\varphi_{\mu})\geq I(C_L)-\frac{\epsilon}{2},
$$
Next, we choose an open  neighborhood $G_{\mu}$ of $\mu$ such that for any $\nu\in G_{\mu}$,
$$
J_\nu(\varphi_{\mu})\geq I(C_L)- \epsilon.
$$
Let $G_{\mu_1},\ldots,G_{\mu_k}$  be  a finite open covering of $C_L$.   
For any $\varphi\in C^{1,lip}([0,T]\times\mathbb{N})$, set
\begin{equation}\label{A1-set}
\begin{aligned}
A_1(\varphi,\epsilon):=& \bigg\{\bigg|\int_0^T\left\<  L_t^N, \nabla \varphi(t)\right\>  \frac{\sqrt{N}}{a(N)}\bigg( \phi\left(\int_0^{t} h(t-s)d \overline{Z}^{N}_s\right) -\phi\left(\int_0^{t} h(t-s)d m_s\right)\bigg)dt\\
&\qquad-\int_0^T\< \mathcal L_t, \nabla \varphi(t)\>  \phi'\bigg(\int_0^{t} h(t-s)d m_s\bigg)  \int_0^{t} h(t-s) d\left\<\widetilde{L}_s^N,\ell\right\>dt\bigg|\geq \epsilon \bigg\},
\end{aligned}
\end{equation}
and
\begin{equation}\label{A2-set}
\begin{aligned}
A_2(\varphi,\epsilon):= &\bigg\{\bigg|\frac{N}{a^2(N)} \int_0^T\bigg\<  {L}_t^N,e^{\frac{a(N)}{\sqrt{N}}\nabla \varphi(t)}-1-\frac{a(N)}{\sqrt{N}}\nabla \varphi(t)\bigg\>  \phi\left(\int_0^{t}
h(t-s)d \overline{Z}^{N}_s\right)  \\
&\qquad\qquad   - \frac{1}{2}\left\<  \mathcal L_t, (\nabla \varphi(t))^2\right\>  \phi\left(\int_0^{t}
h(t-s)d m_s\right) dt \bigg|\geq \epsilon \bigg\},
\end{aligned}
\end{equation}
and  define $A(\varphi,\epsilon):= A_1(\varphi,\epsilon)\cup A_2(\varphi,\epsilon)$. Then by Lemma~\ref{MDP-lem-4} and \ref{MDP-lem-5}, we have
\begin{equation*}
\begin{aligned}
\limsup_{N\rightarrow\infty}\frac{1}{a^2(N)}\log\mathbb{P}(A(\varphi, \epsilon))=-\infty.
\end{aligned}
\end{equation*}
Note that on  $\left\{\widetilde{L}^N\in G_\mu\right\}\cap (A(\varphi,\epsilon))^c$,
$$
\begin{aligned}
\left(\widetilde{\mathcal E}_T^{N,\varphi}\right)^{-1}
&\leq\exp\bigg\{-a^2(N)\inf_{\nu\in G_\mu}\bigg(\< \nu_T,\varphi(T)\>-\int_0^T\<\nu_{s},\partial_s \varphi(s)\>   ds\\
&\quad -\int_0^T\< \nu_s, \nabla \varphi(s)\>   \phi\left(\int_0^s
h(s-u)d m_u\right)  ds\\
&\quad\quad -\int_0^T\<  \nu_t, \nabla \varphi(t)\>    \phi'\left(\int_0^{t} h(t-s)d m_s\right)  \int_0^{t}
h(t-s)d \<\nu_s, \ell\>dt\\
&\quad\quad\quad -\frac{1}{2}\int_0^T \left\<  \mathcal L_t, (\nabla \varphi(t))^2\right\>  \phi\left(\int_0^{t}
h(t-s)d m_s\right) dt\bigg)+2\epsilon a^2(N) \bigg\}.
\end{aligned}
$$
Then for any $\varphi\in C^{1,lip}([0,T]\times\mathbb{N})$, 
\begin{equation*}
\begin{aligned}
& \limsup_{N\rightarrow\infty}\frac{1}{a^2(N)}\log\mathbb{P}\left(\widetilde{L}^N\in G_{\mu_i}\right)\\
&= \limsup_{N\rightarrow\infty}\frac{1}{a^2(N)}\log\mathbb{P}\left(\left\{\widetilde{L}^N\in G_{\mu_i}\right\}\cap A^c(\mu_i,\epsilon)\right)\\
&= \limsup_{N\rightarrow\infty}\frac{1}{a^2(N)}\log\mathbb{E}\left(\left(\widetilde{\mathcal E}_T^{N,\varphi}\right)^{-1}\widetilde{\mathcal E}_T^{N,\varphi}I_{\{\widetilde{L}^N\in G_{\mu_i}\}\cap (A(\varphi,\epsilon))^c}\right)\\
&\leq -  \inf_{\nu\in G_{\mu_i}}J_{\nu}(\varphi)+2\epsilon.
\end{aligned}
\end{equation*}
Thus,
\begin{equation}\label{MDP-upper bound-eq-1}
\begin{aligned}
&\limsup_{N\rightarrow\infty}\frac{1}{a^2(N)}\log\mathbb{P}\left(\widetilde{L}^N\in G_{\mu_i}\right)\leq -\sup_{\varphi\in C^{1,lip}([0,T]\times\mathbb{N})}\inf_{\nu\in G_{\mu_i}}J_\nu(\varphi)+2\epsilon.
\end{aligned}
\end{equation}
By \eqref{MDP-upper bound-eq-1}, we obtain that
\begin{equation*}
\begin{aligned}
\limsup_{N\rightarrow\infty}\frac{1}{a^2(N)}\log\mathbb{P}\left(\widetilde{L}^N\in C_L\right)\leq &-\min_{1\leq i\leq k}\sup_{\varphi\in C^{1,lip}([0,T]\times\mathbb{N})}\inf_{\nu\in G_{\mu_i}}J_\nu(\varphi)+2\epsilon\\
\leq & -\ \min_{1\leq i\leq k} \inf_{\nu\in G_{\mu_i}}J_\nu(\varphi_{\mu_i})+2\epsilon\\
\leq & -I(C_L)+3\epsilon.
\end{aligned}
\end{equation*}
By letting $\epsilon\to 0$, we obtain
$$
\limsup_{N\rightarrow\infty}\frac{1}{a^2(N)}\log\mathbb{P}\left(\widetilde{L}^N\in C_L\right)\leq  -I(C_L).
$$ 
Thus
$$
\begin{aligned}
\limsup_{N\rightarrow\infty}\frac{1}{a^2(N)}\log\mathbb{P}\left(\widetilde{L}^N\in C\right)
&\leq \max\left\{\limsup_{N\rightarrow\infty}\frac{1}{a^2(N)}\log\mathbb{P}\left(\widetilde{L}^N\in C_L\right),  -L\right\}\\
&\leq - \min\left\{ L, \inf_{\mu \in C}I(\mu)\right\}.
\end{aligned}
$$
Finally, by letting $L\to\infty$,  we complete the proof of the upper bound.
\end{proof}

\subsection{Perturbed Hawkes process}\label{sec:perturbed}

In order to prove the lower bound for the moderate deviation principle, we need to introduce and provide law of large numbers for perturbed Hawkes processes.
For each $\psi\in C^{1,lip}([0,T]\times\mathbb{N})$, we introduce the perturbed Hawkes precess $\left(Z_t^{\psi,N,1},\ldots,Z_t^{\psi,N,N}\right)_{t\geq 0}$ defined by the following SDEs:
\begin{equation}\label{Pertur-Hawkes-psi-eq}
Z^{\psi,N,i}_t=\int_0^t \int_0^\infty I_{\left\{z \leq e^{\frac{a(N)}{\sqrt{N}}\nabla\psi(s,Z^{\psi,N,i}_{s-})}\phi\left(\int_0^{s-}h(s-u)d\overline{Z}_u^{\psi, N}\right)\right\}}
\pi^i(ds\,dz),
\qquad
1\leq i\leq N,
\end{equation}
where $\overline{Z}_t^{\psi, N}:=\frac{1}{N}\sum_{i=1}^NZ^{\psi,N,i}_t$.
It is known that there exist a unique solution to \eqref{Pertur-Hawkes-psi-eq} (see Lemma~2.1 in \cite{GaoFZhuMF-LDP2023}).

 Define $L_t^{\psi,N}(x):=\frac{1}{N}\sum_{i=1}^N\delta_{Z^{\psi,N,i}_t}(\{x\})$, and 
\begin{equation*}\label{Pertur-Hawkes-M-MF-def}
 \widetilde{L}_t^{\psi,N}(x) := \frac{1}{\sqrt{N}a(N)}\sum_{i=1}^N
\left( \delta_{Z^{\psi,N,i}_t}(\{x\})-\mathcal L_t(\{x\})\right).
\end{equation*}

Let  $\mathbb B([0,T]\times \mathbb N)$ denote the space of all real measurable functions on  $[0,T]\times \mathbb N$. Set 
$$
\mathbb L^2:=\left\{g\in \mathbb B([0,T]\times \mathbb N);~\|g\|_{\mathbb L^2}^2=\int_0^T\sum_{x\in\mathbb N }|g(t,x)|^2 \mathcal L_t(x)  \phi\left(\int_0^{t}
h(t-s)d m_s\right) dt<\infty \right\}.
$$
For  $f,g\in \mathbb L^2$, $g\sim f$ means $\|f-g\|_{\mathbb L^2}=0$.  
For simplicity of notations, we still denote by  $ \mathbb L^2= \mathbb L^2/\sim $. Then 
 $\mathbb L^2$ is  a Hilbert space.

For any  $g\in\mathbb L^2$, we consider the differential equation:
\begin{equation}\label{Pertur-lim-equation-eq-1}
\left\{
\begin{aligned}
\partial_t \mu_{t}+& \phi\left(\int_0^{t} h(t-s)d m_s\right) \nabla \mu_t 
+ \phi'\left(\int_0^{t}h(t-s)d m_s\right)  \int_0^{t}h(t-s)d \<\mu_s,\ell\>\nabla\mathcal L_t \\
&+g(t)  \phi\left(\int_0^{t}h(t-s)d m_s\right)\nabla\mathcal L_t=0,\\
\mu_0=&0.
\end{aligned}
\right.
\end{equation}
A function $\mu=\{\mu_t(x), t\in[0,T],x\in\mathbb N\}\in D([0,T], \mathbb H^{-1}(\mathbb N))$  is said to be a solution to the  equation  \eqref{Pertur-lim-equation-eq-1} if  for any $ \varphi\in C^{1,lip}([0,T]\times\mathbb{N})$,
\begin{equation}\label{Pertur-lim-equation-eq-2}
\begin{aligned}
&\< \mu_T,\varphi(T)\>- \int_0^{T}\< \mu_{t},\partial_t \varphi(t)\>   dt-\int_0^{T}\< \mu_t, \nabla \varphi(t)\>   \phi\left(\int_0^{t} h(t-s)d m_s\right)  dt\\
&\qquad-\int_0^{T}\< \mathcal L_t, \nabla \varphi(t)\> \phi'\left(\int_0^{t}
h(t-s)d m_s\right)  \int_0^{t}
h(t-s)d \<\mu_s,\ell\>dt\\
&\qquad\qquad-\int_0^T\<g(t) \mathcal L_t, \nabla \varphi(t) \>  \phi\left(\int_0^{t}
h(t-s)d m_s\right) dt=0.
\end{aligned}
\end{equation}

Next, we show that the solution to the  equation  \eqref{Pertur-lim-equation-eq-1} is unique, 
and the proof of the existence of the solution to \eqref{Pertur-lim-equation-eq-1} will be provided later in Theorem~\ref{Pertur-Hawkes-LLN-thm}. 

\begin{lem}\label{Pertur-equation-uniq-lem}
Suppose that (A.1), (A.2) and (A.3) hold.  Then for each $g\in \mathbb L^2$,  the solution to the  equation  \eqref{Pertur-lim-equation-eq-1} is unique. 
\end{lem}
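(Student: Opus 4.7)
The plan is to reduce the problem to the uniqueness argument already carried out at the end of the proof of Theorem~\ref{CLT-thm}. Let $\mu^{1}$ and $\mu^{2}$ be two solutions of \eqref{Pertur-lim-equation-eq-1} in the weak sense \eqref{Pertur-lim-equation-eq-2} corresponding to the same $g\in\mathbb L^{2}$, and set $\overline\mu:=\mu^{1}-\mu^{2}$. Since the equation is linear in $\mu$ and the inhomogeneous term involving $g$ does not depend on $\mu$, subtracting the two weak formulations gives, for every $\varphi\in C^{1,lip}([0,T]\times\mathbb N)$ and every $t\in[0,T]$,
\begin{equation*}
\begin{aligned}
\<\overline\mu_{t},\varphi(t)\>
&=\int_{0}^{t}\<\overline\mu_{s},\partial_{s}\varphi(s)\>\,ds
+\int_{0}^{t}\<\overline\mu_{s},\nabla\varphi(s)\>\,\phi\!\left(\int_{0}^{s}h(s-u)\,dm_{u}\right)ds\\
&\quad +\int_{0}^{t}\<\mathcal L_{s},\nabla\varphi(s)\>\,\phi'\!\left(\int_{0}^{s}h(s-u)\,dm_{u}\right)\int_{0}^{s}h(s-u)\,d\<\overline\mu_{r},\ell\>\,ds.
\end{aligned}
\end{equation*}
This is exactly the identity \eqref{unique-CLT-thm-eq} satisfied by the difference of two solutions of the CLT equation \eqref{CLT-thm-eq}, so the uniqueness argument from the proof of Theorem~\ref{CLT-thm} applies verbatim.

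First I would substitute $\varphi(t,x)=\ell(x)=x$ (which lies in $C^{1,lip}([0,T]\times\mathbb N)$ after a routine smoothing argument, or one can treat it by approximation using $\ell\wedge n$ and pass to the limit using the locally bounded total variation of $\overline\mu$) to reduce to a closed linear Volterra inequality for $G(t):=\sup_{s\in[0,t]}|\<\overline\mu_{s},\ell\>|$. Using integration by parts as in the proof of Theorem~\ref{CLT-thm},
\begin{equation*}
\left|\int_{0}^{s}h(s-u)\,d\<\overline\mu_{u},\ell\>\right|\le\bigl(h(0)+\|h'\|_{L^{1}[0,T]}\bigr)G(s),
\end{equation*}
together with the local boundedness of $\phi'$ on the range of $\int_{0}^{\cdot}h(\cdot-u)\,dm_{u}$, Gronwall's inequality yields $G(T)=0$, so $\<\overline\mu_{t},\ell\>\equiv0$ on $[0,T]$.

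With the $\<\overline\mu_{s},\ell\>$-term eliminated, the identity collapses to
\begin{equation*}
\<\overline\mu_{t},\varphi\>=\int_{0}^{t}\<\overline\mu_{s},\nabla\varphi\>\,\phi\!\left(\int_{0}^{s}h(s-u)\,dm_{u}\right)ds
\end{equation*}
for every time-independent $\varphi\in C^{lip}(\mathbb N)$. Plugging in $\varphi=\varphi_{n}:=I_{[n+1,\infty)\cap\mathbb N}$ recursively on $n=0,1,2,\dots$ (each $\varphi_{n}$ is bounded, hence in $C^{lip}(\mathbb N)$), each step of the recursion isolates $\overline\mu_{t}(n)$ and produces a scalar Volterra equation to which Gronwall applies, giving $\sup_{t\in[0,T]}|\overline\mu_{t}(n)|=0$ for every $n\ge 0$. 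Therefore $\overline\mu\equiv 0$ in $D([0,T],\mathbb H^{-1}(\mathbb N))$, proving uniqueness.

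The only mildly delicate point is justifying the use of the unbounded test function $\ell(x)=x$, which is not directly in $C^{1,lip}([0,T]\times\mathbb N)$ as a bounded function, but this is handled by the standard approximation $\ell_{k}(x):=x\wedge k\in C^{lip}(\mathbb N)$ and using that each $\overline\mu_{t}$ is a finite signed measure on $\mathbb N$ with $\sum_{x}x|\overline\mu_{t}|(x)<\infty$ (inherited from the integrability of $\mu^{1},\mu^{2}$ as elements of $\mathbb H^{-1}(\mathbb N)$ arising from solutions of \eqref{Pertur-lim-equation-eq-1}). Once this approximation is in place, the rest of the argument is the Gronwall bootstrap described above, and the proof is complete.
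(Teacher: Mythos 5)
Your proof matches the paper's: the paper likewise sets $\overline{\mu}:=\nu-\mu$, observes that the $g$-dependent term cancels on subtraction so $\overline{\mu}$ solves the homogeneous identity \eqref{unique-CLT-thm-eq}, and then invokes verbatim the Gronwall--bootstrap uniqueness argument from the proof of Theorem~\ref{CLT-thm}. One small correction: $\ell(x)=x$ already belongs to $C^{1,lip}([0,T]\times\mathbb N)$, since $\|\cdot\|_{lip}$ controls only $|\psi(0)|$ and the Lipschitz seminorm, not the sup-norm, so the truncation $\ell\wedge k$ you propose is a safe but unnecessary precaution (the paper simply plugs $\varphi(t,x)=\ell(x)$ in directly).
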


\begin{proof}
Let $\nu$ and $\mu$ be two solutions to \eqref{Pertur-lim-equation-eq-1}.
Ser $\overline{\mu}:=\nu-\mu$.
 Then  for any $t\in [0,T]$,
\begin{equation}\label{Pertur-lim-equation-eq-2}.
\begin{aligned}
\left\<\overline{\mu}_t,\varphi(t)\right\>=  &\int_0^t \left\<\overline{\mu}_s,\partial_s\varphi(s)\right\>ds+
 \int_0^{t}\left\<\overline{\mu}_s, \nabla \varphi(s)\right\>  \left(  \phi\left(\int_0^{s} h(s-u)d m_u\right)\right) ds\\
&+\int_0^{t}\left\<\mathcal{L}_s, \nabla \varphi(s)\right\> \phi'\left(\int_0^{s} h(s-u)d m_u\right) \int_0^{s} h(s-u)d\left\<\overline{\mu}_u, \ell\right\>ds.
\end{aligned}
\end{equation}
Thus, by following the same argument as in  the proof  of  uniqueness  in Theorem \ref{CLT-thm}  yields  $\overline{\mu}=0$. 
This completes the proof.
\end{proof}

The following result is a law of large numbers for the mean-fields  of the perturbed Hawkes precess.
 
\begin{thm}\label{Pertur-Hawkes-LLN-thm}
Suppose that (A.1), (A.2) and (A.3) hold.  Let  $\psi\in C^{1,lip}([0,T]\times\mathbb{N})$ and let  $\left\{Z_t^{\psi,N,i},t\in[0,T]\right\}$ be a solution of the SDEs \eqref{Pertur-Hawkes-psi-eq}. 
Then there exists a unique solution  $\mu^\psi=\left\{\mu_t^{\psi}(x), t\in[0,T],x\in\mathbb N\right\}$ to the  equation  \eqref{Pertur-lim-equation-eq-1},
and  for any an open set $O\ni \mu^{\psi}$,
\begin{align}
\lim_{N\rightarrow\infty}\mathbb{P}\left(\widetilde{L}^{N,\psi}\in O\right)=1.
\end{align}
\end{thm}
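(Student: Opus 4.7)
The plan is to adapt the strategy of Theorem~\ref{CLT-thm} to the perturbed dynamics at the moderate-deviation scale $\sqrt{N}a(N)$. The ``$g$'' appearing in \eqref{Pertur-lim-equation-eq-1} will be identified as $g(t,x)=\nabla\psi(t,x)$, arising from the first-order expansion $e^{\frac{a(N)}{\sqrt N}\nabla\psi}-1\approx \frac{a(N)}{\sqrt N}\nabla\psi$ of the exponential tilt. Uniqueness of the limit equation was already handled in Lemma~\ref{Pertur-equation-uniq-lem}, so existence of $\mu^\psi$ will drop out of the convergence argument itself.

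First, I would establish an analogue of Lemma~\ref{Basic-lem-0} for the perturbed mean process $\overline Z^{\psi,N}:=\frac{1}{N}\sum_{i=1}^N Z^{\psi,N,i}$. Since $\|\psi\|_{lip}<\infty$ and $a(N)/\sqrt N\to 0$, the tilt factor $e^{\frac{a(N)}{\sqrt N}\nabla\psi}$ lies in $[e^{-1},e]$ for all sufficiently large $N$, so the same martingale and Gr\"onwall arguments from Section~\ref{sec:exponential} deliver $\sqrt N\sup_{t\leq T}|\overline Z_t^{\psi,N}-m_t|=O_P(1)$, and therefore $\frac{\sqrt N}{a(N)}\sup_{t\leq T}|\overline Z_t^{\psi,N}-m_t|\to 0$ in probability. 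A similar propagation-of-chaos bound on the unrescaled empirical measure gives the weak convergence $L_t^{\psi,N}\to\mathcal L_t$ in probability, uniformly in $t\in[0,T]$.

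Next, apply It\^o's formula to $\<\widetilde L^{\psi,N}_t,\varphi(t)\>$ for $\varphi\in C^{1,lip}([0,T]\times\mathbb N)$, after using that $\mathcal L_t$ is the law of the inhomogeneous Poisson process \eqref{Hawkes-mean-eq} so that $\partial_t\mathcal L_t(\varphi(t))=\mathcal L_t(\partial_t\varphi(t))+\mathcal L_t(\nabla\varphi(t))\phi(\int_0^t h(t-s)dm_s)$. This yields a decomposition
\begin{align*}
\<\widetilde L^{\psi,N}_t,\varphi(t)\> &= \int_0^t\<\widetilde L^{\psi,N}_s,\partial_s\varphi(s)\>ds+\int_0^t\<\widetilde L^{\psi,N}_s,\nabla\varphi(s)\>\phi\!\left(\int_0^s h(s-u)dm_u\right)ds \\
&\quad + R^{(1)}_t+R^{(2)}_t+\frac{1}{a(N)}M^{\psi,\varphi,N}_t,
\end{align*}
where $R^{(1)}_t$ is the $\sqrt N/a(N)$-weighted contribution of $\phi(\int h\,d\overline Z^{\psi,N})-\phi(\int h\,dm)$, $R^{(2)}_t$ is the $\sqrt N/a(N)$-weighted contribution of $e^{\frac{a(N)}{\sqrt N}\nabla\psi(s)}-1$, and $M^{\psi,\varphi,N}$ is the compensated Poisson martingale. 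The key steps are: (i) $\frac{1}{a(N)}M^{\psi,\varphi,N}\to 0$ in probability, by a quadratic-variation plus Doob inequality argument together with $a(N)\to\infty$; (ii) $R^{(1)}_t$ converges in probability to $\int_0^t \<\mathcal L_s,\nabla\varphi(s)\>\phi'(\int_0^s h(s-u)dm_u)\int_0^s h(s-u)d\<\widetilde L^{\psi,N}_u,\ell\>ds$, by Taylor-expanding $\phi$ in the spirit of Lemma~\ref{CLT-thm-lem-3} but at the moderate-deviation scale, using uniform continuity of $\phi'$; (iii) $R^{(2)}_t$ converges in probability to $\int_0^t\<\nabla\psi(s)\mathcal L_s,\nabla\varphi(s)\>\phi(\int_0^s h(s-u)dm_u)ds$, since $\frac{\sqrt N}{a(N)}(e^{\frac{a(N)}{\sqrt N}\nabla\psi}-1)=\nabla\psi+O(a(N)/\sqrt N)$ uniformly and $L^{\psi,N}_s\to\mathcal L_s$.

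Finally, tightness of $\{\widetilde L^{\psi,N}\}$ in $D([0,T],\mathbb H^{-1}(\mathbb N))$ follows by mimicking Lemma~\ref{CLT-thm-lem-4}. Any subsequential limit $\mu$ must then satisfy the weak formulation \eqref{Pertur-lim-equation-eq-2} with $g=\nabla\psi$, and Lemma~\ref{Pertur-equation-uniq-lem} pins down a unique such $\mu^\psi$, simultaneously establishing existence of $\mu^\psi$ and the convergence in probability $\widetilde L^{\psi,N}\to\mu^\psi$. The main obstacle will be the simultaneous control of $R^{(1)}$ and $R^{(2)}$: both arise from first-order effects of size $a(N)/\sqrt N$ in the perturbed intensity, but their rescaled limits require separate treatments, since $R^{(1)}$ depends on the moderate-deviation-scale fluctuation of $\overline Z^{\psi,N}-m$, whereas $R^{(2)}$ only needs a law of large numbers for $L^{\psi,N}$ coupled with the exponential Taylor remainder estimate.
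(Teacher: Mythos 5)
Your proposal follows essentially the same route as the paper's proof: apply It\^o's formula to $\left\<\widetilde{L}^{N,\psi}_t,\varphi(t)\right\>$, identify the two residual terms (the one comparing $\phi\left(\int h\,d\overline{Z}^{\psi,N}\right)$ with $\phi\left(\int h\,dm\right)$ and the one carrying the tilt $e^{\frac{a(N)}{\sqrt{N}}\nabla\psi}-1$), kill the martingale, establish $C$-tightness by adapting the arguments of Lemma~\ref{CLT-thm-lem-1} and Lemma~\ref{CLT-thm-lem-4}, and invoke Lemma~\ref{Pertur-equation-uniq-lem} to identify and uniquely pin down the limit. You spell out more carefully than the paper the need to re-derive the exponential-moment and law-of-large-numbers estimates for the perturbed dynamics, but this is implicit in the paper's "following the same argument as ..." and in its appeal to the convergence $L^{N,\psi}_t\to\mathcal{L}_t$, so the approaches agree.
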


\begin{proof}
For any $\varphi\in C^{1,lip}([0,T]\times\mathbb{N})$, by It\^{o}'s formula,
\begin{equation}\label{MDP-New-Ito formula}
\begin{aligned}
&\left\<\widetilde{L}^{N,\psi}_T,\varphi(T)\right\>
\\
=&\int_0^{T}\left\< \widetilde{L}^{N,\psi}_t,\partial_t \varphi(t)\right\>   dt+ \int_0^{T}\left\<\widetilde{L}_t^{N,\psi}, \nabla \varphi(t)\right\>  \phi\left(\int_0^{t} h(t-s)d m_s\right) dt\\
&+ \int_0^{T}\left\<L_t^{N,\psi}, \nabla \varphi(t)\right\>   \frac{\sqrt{N}}{a(N)}\left( \phi\left(\int_0^{t} h(t-s)d \overline{Z}^{\psi,N}_s\right)-\phi\left(\int_0^{t} h(t-s)d m_s\right)\right) dt\\
&+ \frac{\sqrt{N}}{a(N)}\int_0^{T}\left\<L_t^{N,\psi}, \nabla \varphi(t)\left(e^{\frac{a(N)}{\sqrt{N}}\nabla\psi(t)}-1\right)\right\>   \phi\left(\int_0^{t} h(t-s)d \overline{Z}^{\psi,N}_s\right) dt+ \frac{\sqrt{N}}{a(N)}M_T^{N,\psi}\,,\\
\end{aligned}
\end{equation}
where 
\begin{align*}
M_T^{N,\psi}&:=\frac{1}{N}\sum_{i=1}^N\int_0^{T} \int_0^\infty \nabla  \varphi\left(t,Z_{t-}^{\psi,N,i}\right)
\\
&\qquad\qquad\qquad\qquad\cdot I_{ \left\{z \leq e^{\frac{a(N)}{\sqrt{N}}\nabla\psi(t,Z^{\psi,N,i}_{t-})}\phi\left(\int_0^{t-}
h(t-s)d \overline{Z}^{\psi,N}_s\right)\right\}}   ({\pi}^{i}(dt\,dz)-dtdz)
\end{align*}
is a martingale with the predictable quadratic variation
\begin{equation*}
\begin{aligned}
\left\<M^{N,\psi}\right\>_T:=& \int_0^{T} \left\<L_t^{N,\psi},( \nabla \varphi(t))^2 e^{\frac{a(N)}{\sqrt{N}}\nabla\psi(t)}\right\>  \phi\left(\int_0^{t}
h(t-s)d \overline{Z}^{\psi,N}_s\right)  dt\,.
\end{aligned}
\end{equation*}
Note that $L_t^{N,\psi}$ converges to the mean-field limit $\mathcal{L}$. Following the same argument as in the proof of Lemma~\ref{CLT-thm-lem-1} and \ref{CLT-thm-lem-4}, we can show that $\left\{\widetilde{L}^{N,\psi},N\geq 1\right\}$ is tight, and any of its limit point $\mu$ satisfies \eqref{Pertur-lim-equation-eq-1}.  Therefore, there exists a solution to the equation \eqref{Pertur-lim-equation-eq-1}.   By  Lemma~\ref{Pertur-equation-uniq-lem},    the equation \eqref{Pertur-lim-equation-eq-1} has a unique solution $\mu^\psi=\left\{\mu_t^{\psi}(x), t\in[0,T],x\in\mathbb N\right\}$, and $\left\{\widetilde{L}^{N,\psi},N\geq 1\right\}$ converges weakly to $\mu^\psi=\left\{\mu_t^{\psi}(x), t\in[0,T],x\in\mathbb N\right\}$. The proof is complete.
\end{proof}

\subsection{Lower bound}\label{lower:bound:MDP}

In this subsection, we prove the lower bound of the moderate deviation principle.
First, we define the scalar product in $C^{1,lip}([0,T]\times\mathbb{N})$ as
$$
[f,g]:=\int_0^T\< \mathcal L_t, \nabla f(t) \nabla g(t) \>  \phi\left(\int_0^{t}
h(t-s)d m_s\right) dt.
$$
For $f,g\in C^{1,lip}([0,T]\times\mathbb{N})$, $g\sim f$ means $[f-g,f-g]=0$. 
Let $\mathbb{H}^1$ denote the Hilbert
space defined as completion of $C^{1,lip}([0,T]\times\mathbb{N})/\sim$ with respect to the scalar product $[\cdot,\cdot]$.
It is obvious that
$$
\psi\in \mathbb{H}^1  \mbox{ if and only if } \nabla \psi \in \mathbb L^2,
$$
and
$$
\|\psi\|_{\mathbb H^1}=\|\nabla\psi\|_{\mathbb L^2}.
$$

Next, we establish some properties for the rate function ${I}(\mu)$.

\begin{lem}\label{MDP-rate-f-property-lem}
Suppose that (A.1), (A.2) and (A.3) hold. If ${I}(\mu)<\infty$, then there exists a function $\psi\in\mathbb{H}^1$ such that
\begin{equation}\label{MDP-rate-f-property-lem-eq-1-1}
{I}(\mu)=\frac{1}{2}[\psi,\psi]=\frac{1}{2} \|\nabla\psi\|_{\mathbb L^2}.
\end{equation}
Furthermore, let $\psi^{(n)}\in C^{1,lip}([0,T]\times\mathbb{N})$, $n\geq1$ satisfy $\left[\psi^{(n)}-\psi,\psi^{(n)}-\psi\right]\rightarrow0$ as $n\rightarrow\infty$ and let $\mu^{(n)}$  be the solution of \eqref{Pertur-lim-equation-eq-1} associated with $\nabla\psi^{(n)}$.  Then 
\begin{equation}\label{MDP-rate-f-property-lem-eq-1-2}
\mu^{(n)}\rightarrow\mu,\quad\quad {I}\left(\mu^{(n)}\right)\rightarrow{I}(\mu).
\end{equation}
\end{lem}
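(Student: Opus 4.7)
The plan is to exploit the quadratic structure of $J_\mu(\varphi)$ in $\varphi$ so as to realize $\Upsilon_\mu$ as a bounded linear functional on the Hilbert space $\mathbb H^1$, and then invoke the Riesz representation theorem. I will begin by noting that for any $\varphi\in C^{1,lip}([0,T]\times\mathbb N)$ and any $t\in\mathbb R$, linearity gives $\Upsilon_\mu(t\varphi)=t\Upsilon_\mu(\varphi)$ while $[t\varphi,t\varphi]=t^{2}[\varphi,\varphi]$, so
\begin{equation*}
I(\mu)\geq J_\mu(t\varphi)=t\Upsilon_\mu(\varphi)-\frac{t^{2}}{2}[\varphi,\varphi].
\end{equation*}
Optimizing the right-hand side over $t$ yields $\Upsilon_\mu(\varphi)^{2}\leq 2 I(\mu)\cdot[\varphi,\varphi]$. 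Since $I(\mu)<\infty$, this shows that $\Upsilon_\mu$ is bounded on $C^{1,lip}([0,T]\times\mathbb N)/\sim$ with respect to the inner product $[\cdot,\cdot]$, and therefore extends uniquely to a continuous linear functional on the completion $\mathbb H^1$.

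By the Riesz representation theorem there exists $\psi\in\mathbb H^1$ with $\Upsilon_\mu(\varphi)=[\psi,\varphi]$ for all $\varphi\in C^{1,lip}$. Substituting back gives $J_\mu(\varphi)=[\psi,\varphi]-\tfrac{1}{2}[\varphi,\varphi]$, which by Cauchy--Schwarz is bounded above by $\tfrac{1}{2}[\psi,\psi]$, while an approximating sequence $\varphi_n\in C^{1,lip}$ with $\varphi_n\to\psi$ in $\mathbb H^1$ gives $J_\mu(\varphi_n)\to\tfrac{1}{2}[\psi,\psi]$. This establishes \eqref{MDP-rate-f-property-lem-eq-1-1}. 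Moreover, rewriting the identity $\Upsilon_\mu(\varphi)=\int_0^{T}\<\nabla\psi(t)\mathcal L_t,\nabla\varphi(t)\>\phi\bigl(\int_0^{t}h(t-s)dm_s\bigr)dt$ in the form \eqref{Pertur-lim-equation-eq-2} shows that $\mu$ is the unique solution of \eqref{Pertur-lim-equation-eq-1} associated with $g=\nabla\psi$, by Lemma~\ref{Pertur-equation-uniq-lem}.

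Applying the same Riesz argument to each $\mu^{(n)}$ will give $\Upsilon_{\mu^{(n)}}(\varphi)=[\psi^{(n)},\varphi]$ and hence $I(\mu^{(n)})=\tfrac{1}{2}[\psi^{(n)},\psi^{(n)}]$; convergence $\psi^{(n)}\to\psi$ in $\mathbb H^1$ then immediately yields $I(\mu^{(n)})\to I(\mu)$. The harder part will be establishing $\mu^{(n)}\to\mu$ in $D([0,T],\mathbb H^{-1}(\mathbb N))$. For this I will exploit the linearity of equation~\eqref{Pertur-lim-equation-eq-1}: the difference $\mu^{(n)}-\mu$ satisfies the same equation with forcing replaced by $\nabla\psi^{(n)}-\nabla\psi$. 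First testing against $\varphi(t,x)=\ell(x)$ and using integration by parts (as in the uniqueness proof of Theorem~\ref{CLT-thm}) produces an integral inequality for $\sup_{s\leq t}|\<\mu^{(n)}_s-\mu_s,\ell\>|$ whose inhomogeneous term is bounded by $\|\nabla\psi^{(n)}-\nabla\psi\|_{\mathbb L^{2}}$ up to a constant depending on $h,\phi,\mathcal L$, and Gronwall's lemma closes the loop. Then iterating recursively with the test functions $\varphi=I_{[n+1,\infty)\cap\mathbb N}$ (again as in the uniqueness proof of Theorem~\ref{CLT-thm}) propagates convergence to every coordinate, giving $\sup_{t\in[0,T]}|\<\mu^{(n)}_t-\mu_t,\varphi\>|\to 0$ for any $\varphi\in C^{lip}(\mathbb N)$, which is the required convergence.
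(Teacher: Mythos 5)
The first half of your argument matches the paper's: the optimization over $t$ gives $\Upsilon_\mu(\varphi)^2\leq 2I(\mu)[\varphi,\varphi]$, Riesz representation gives $\psi\in\mathbb H^1$ with $\Upsilon_\mu(\varphi)=[\psi,\varphi]$, and then $I(\mu)=\tfrac12[\psi,\psi]$ and the identification of $\mu$ as the solution of \eqref{Pertur-lim-equation-eq-1} with $g=\nabla\psi$ all follow as you say. (One small wording issue: for $\mu^{(n)}$ you do not need a ``Riesz argument'' again --- the identity $\Upsilon_{\mu^{(n)}}(\varphi)=[\psi^{(n)},\varphi]$ is immediate from the defining equation \eqref{Pertur-lim-equation-eq-2}, since $\mu^{(n)}$ is \emph{constructed} as the solution with $g=\nabla\psi^{(n)}$.)

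The gap is in your argument for $\mu^{(n)}\to\mu$. You propose to mimic the uniqueness proof of Theorem~\ref{CLT-thm}, iterating over the indicator test functions $\varphi=I_{[k+1,\infty)\cap\mathbb N}$ to get $\overline\mu^{(n)}_t(k)\to0$ coordinate by coordinate, and then you conclude ``this gives $\sup_t|\langle\overline\mu^{(n)}_t,\varphi\rangle|\to 0$ for any $\varphi\in C^{lip}(\mathbb N)$.'' That last inference does not follow. The topology on $\mathbb H^{-1}(\mathbb N)$ requires convergence against all Lipschitz test functions, which may grow linearly in $x$, so pointwise vanishing of $\overline\mu^{(n)}_t(k)$ at each fixed $k$ is not enough --- you need a uniformly summable tail. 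Moreover, the constants in your coordinate recursion depend on the coordinate $k$ (through the forcing terms $\langle\mathcal L_s,I_{\{k\}}\rangle$ and the cumulative Gronwall factors), and you give no uniform control. The paper's proof avoids this altogether: it notes that the difference equation \eqref{Pertur-lim-equation-eq-1} \emph{only involves $\nabla\varphi$}, so the estimate \eqref{MDP-rate-f-property-lem-eq-5} is uniform over all $\varphi$ with $|\nabla\varphi|\leq 1$ (which includes unbounded Lipschitz functions after normalization by $\|\varphi\|_{lip}$); it then controls the total variation $\eta^{(n)}_t=\sup_{s\leq t}\sum_x|\overline\mu^{(n)}_s(x)|$ by testing against the sign functions $\varphi_n(x)=\mathrm{sign}(\overline\mu^{(n)}_t(x))$ (which satisfy $|\nabla\varphi_n|\leq 2$), closes via Gronwall, and feeds the resulting bound on $\eta^{(n)}$ back into \eqref{MDP-rate-f-property-lem-eq-5} to cover arbitrary $\varphi\in C^{lip}(\mathbb N)$. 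You would need some version of that total-variation-plus-gradient-only estimate to make the final step go through.
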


\begin{proof}
By the definition of $I(\mu)$, 
$$
{I}(\mu)=\sup\left\{{\Upsilon}_\mu(\varphi)-\frac{1}{2}[\varphi,\varphi];\varphi\in C^{1,lip}([0,T]\times\mathbb{N})\right\}, 
$$
we have 
$
|{\Upsilon}_\mu(\varphi)|^2\leq 2{I}(\mu)[\varphi,\varphi]
$  for any  
$\varphi\in C^{1,lip}([0,T]\times\mathbb{N}),
$
and thus ${\Upsilon}_\mu$ can be extended to $\mathbb{H}^1$ and
$$
|{\Upsilon}_\mu(\varphi)|^2\leq 2{I}(\mu)[\varphi,\varphi],~~~\varphi\in \mathbb{H}^1.
$$
By the Riesz representation theorem, there exists $\psi\in\mathbb{H}^1$ such that 
\begin{align}\label{MDP-rate-f-property-lem-eq-2}
{\Upsilon}_\mu(\varphi)=[\psi,\varphi],~~~~\varphi\in C^{1,lip}([0,T]\times\mathbb{N}).
\end{align}
Thus,  \eqref{MDP-rate-f-property-lem-eq-1-1} holds.  Furthermore,   $\mu$  is the solution of \eqref{Pertur-lim-equation-eq-1} associated with $\nabla\psi$.

Now, let $\psi^{(n)}\in C^{1,lip}([0,T]\times\mathbb{N})$, $n\geq1$ satisfy $\left[\psi^{(n)}-\psi,\psi^{(n)}-\psi\right]\rightarrow0$ as $n\rightarrow\infty$ and let $\mu^{(n)}$  be the solution of \eqref{Pertur-lim-equation-eq-1} associated with $\nabla\psi^{(n)}$.  
Set $\overline{\mu}^{(n)}:=\mu^{(n)}-\mu$.
 Then  
 for any $\varphi\in C^{1,lip}([0,T]\times\mathbb{N})$ and $t\in [0,T]$,
\begin{equation}\label{MDP-rate-f-property-lem-eq-3}
\begin{aligned}
\left\<\overline{\mu}_t^{(n)},\varphi(t)\right\>=  &\int_0^t \left\<\overline{\mu}_s^{(n)},\partial_s\varphi(s)\right\>ds+
 \int_0^{t}\left\<\overline{\mu}_s^{(n)}, \nabla \varphi(s)\right\>   \phi\left(\int_0^{s} h(s-u)d m_u\right) ds\\
&+\int_0^{t}\left\<\mathcal{L}_s, \nabla \varphi(s)\right\> \phi'\left(\int_0^{s} h(s-u)d m_u\right) \int_0^{s} h(s-u)d\left\<\overline{\mu}_u^{(n)}, \ell\right\>ds\\
&+\int_0^t\left\< \mathcal L_s, \left(\nabla \psi^{(n)}(s)-\nabla\psi(s)\right) \nabla \varphi(s) \right\>  \phi\left(\int_0^{s}h(s-u)d m_u\right) ds.
\end{aligned}
\end{equation}
By the  Cauchy-Schwarz inequality,
$$
\left|\left\< \mathcal L_s, \left(\nabla \psi^{(n)}(s)-\nabla\psi(s)\right) \nabla \varphi(s) \right\> \right|\leq \left\< \mathcal L_s, | \nabla \varphi(s) |^2\right\>^{1/2} \left\< \mathcal L_s, \left(\nabla \psi^{(n)}(s)-\nabla\psi(s)\right)^2\right\>^{1/2}.
$$
Then, by following the proof of uniqueness in  Theorem~\ref{CLT-thm}, 
one can show that there exist positive constants $c$ and $C$  such that 
\begin{equation}\label{MDP-rate-f-property-lem-eq-4}
G^{(n)}(t) \leq c\int_0^t G^{(n)}(s)ds +C\left[\psi^{(n)}-\psi,\psi^{(n)}-\psi\right]^{1/2},
\end{equation}
where $G^{(n)}(t)= \sup_{s\in[0,t]}\left|\left\<\overline{\mu}_s^{(n)}, \ell\right\>\right|$. By the Gronwall inequality, 
$$
G^{(n)}(t) \leq C\left[\psi^{(n)}-\psi,\psi^{(n)}-\psi\right]^{1/2}e^{ct}.
$$
Noting $\overline{\mu}^{(n)}\in D([0,T]\times \mathbb H^{-1}(\mathbb N))$,   we have
$$
\eta_t^{(n)}:=\sup_{0\leq s\leq t}\sum_{x\in\mathbb N}\left|\overline{\mu}^{(n)}_s(x)\right|<\infty,~t\in[0,T].
$$
Using  \eqref{MDP-rate-f-property-lem-eq-3} and  \eqref{MDP-rate-f-property-lem-eq-4},    there exist positive constants $c_1$ and $C_1$  such that for any   $t\in [0,T]$,
\begin{equation}\label{MDP-rate-f-property-lem-eq-5}
\sup_{ \varphi\in C^{lip}(\mathbb{N}) \text{ with } |\nabla \varphi|\leq 1 }\left|\left\<\overline{\mu}_t^{(n)},\varphi\right\>\right|\leq   c_1\int_0^t \eta_s^{(n)}ds+C_1\left[\psi^{(n)}-\psi,\psi^{(n)}-\psi\right]^{1/2},
\end{equation}
Note that $\{\frac{1}{4}\varphi;~\varphi(x)\in \{-1,1\},~x\in\mathbb N\}\subset \{ \varphi\in C^{lip}(\mathbb{N});~ |\nabla \varphi|\leq 1\}$.
For each fixed $t\in [0,T]$, define $\varphi_n(x):= {\rm sign}\left(\overline{\mu}^{(n)}_t(x)\right)$. Then  by 
$$
\sum_{x\in\mathbb N}\left|\overline{\mu}^{(n)}_t(x)\right|= 4\left\<\overline{\mu}^{(n)}_t,\frac{1}{4}\varphi_n\right\>,
$$  
and \eqref{MDP-rate-f-property-lem-eq-5}, we have
$$
\sum_{x\in\mathbb N}\left|\overline{\mu}^{(n)}_t(x)\right|\leq 4 c_1\int_0^t\eta_s^{(n)}ds+4C_1\left[\psi^{(n)}-\psi,\psi^{(n)}-\psi\right]^{1/2}.
$$
Thus, there exist positive constants $c_2$ and $C_2$  such that 
$$
\eta_t^{(n)}=\sup_{0\leq s\leq t}\sum_{x\in\mathbb N}\left|\overline{\mu}^{(n)}_s(x)\right| \leq c_2\int_0^t\eta_s^{(n)}ds+C_2\left[\psi^{(n)}-\psi,\psi^{(n)}-\psi\right]^{1/2}.
$$
Using the Gronwall inequality again,  we have
$$
\eta_t^{(n)}\leq C_2\left[\psi^{(n)}-\psi,\psi^{(n)}-\psi\right]^{1/2}e^{c_2t}.
$$
Therefore, 
$$
\lim_{n\to\infty} \sup_{t\in[0,T]}\max\left\{G^{(n)}(t),\eta_t^{n}\right\}=0.
$$
Finally,  for any $\varphi\in C^{lip}(\mathbb N)$,   by  \eqref{MDP-rate-f-property-lem-eq-5} and $\nabla (\varphi/\|\varphi\|_{lip})\leq 1$, 
$$
\lim_{n\to\infty }\sup_{s\in [0,T]}\left|\left\<\overline{\mu}_s^{(n)}, \varphi\right\>\right|=0.
$$
This yields that as $n\rightarrow\infty$,
$
\mu^{(n)}\rightarrow\mu, 
$
and
$$
{I}\left(\mu^{(n)}\right)=\frac{1}{2}\left[\psi^{(n)},\psi^{(n)}\right]\to \frac{1}{2}[\psi,\psi]={I}(\mu).
$$
This completes the proof.

\end{proof} 

Now, we are finally ready to prove the lower bound for the moderate deviation principle.

\begin{thm}
Suppose that (A.1), (A.2) and (A.3) hold. Then for any $\mu\in D([0,T], \mathbb H^{-1}(\mathbb N))$ and open set $O\ni\mu$,
\begin{align}
\liminf_{N\rightarrow \infty}\frac{1}{a^2(N)}\log\mathbb{P}\left(\widetilde{L}^N\in O\right)\geq -{I}(\mu).
\end{align}
\end{thm}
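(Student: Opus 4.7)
The plan is a change-of-measure (Girsanov) argument via the exponential martingale \eqref{mdp-exp-martingale-eq-3}, combined with the law of large numbers for the perturbed Hawkes system (Theorem~\ref{Pertur-Hawkes-LLN-thm}). If $I(\mu)=\infty$ the bound is trivial, so assume $I(\mu)<\infty$. By Lemma~\ref{MDP-rate-f-property-lem} pick $\psi\in\mathbb H^1$ with $I(\mu)=\tfrac12[\psi,\psi]$ and approximations $\psi^{(n)}\in C^{1,lip}([0,T]\times\mathbb N)$ with $[\psi^{(n)}-\psi,\psi^{(n)}-\psi]\to 0$; the associated solutions $\mu^{(n)}$ of \eqref{Pertur-lim-equation-eq-1} satisfy $\mu^{(n)}\to\mu$ in $D([0,T],\mathbb H^{-1}(\mathbb N))$ and $I(\mu^{(n)})\to I(\mu)$. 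Since $O$ is open and $\mu^{(n)}\to\mu\in O$, fix $n$ large enough that $\mu^{(n)}\in O$.

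For this fixed $n$, $\tfrac{a(N)}{\sqrt N}\|\psi^{(n)}\|_{lip}\to 0$, so $\widetilde{\mathcal E}_T^{N,\psi^{(n)}}$ defined by \eqref{mdp-exp-martingale-eq-3} is a positive martingale of mean $1$, and the measure $\mathbb Q^N:=\widetilde{\mathcal E}_T^{N,\psi^{(n)}}\,\mathbb P$ makes the original Hawkes system $(Z^{N,1},\dots,Z^{N,N})$ have the same law as the perturbed system \eqref{Pertur-Hawkes-psi-eq} under $\mathbb P$. Writing $\Phi_N$ for $\tfrac{1}{a^2(N)}\log\widetilde{\mathcal E}_T^{N,\psi^{(n)}}$ after replacing $\widetilde L^N, L^N, \overline Z^N$ by their perturbed analogues $\widetilde L^{N,\psi^{(n)}}, L^{N,\psi^{(n)}}, \overline Z^{\psi^{(n)},N}$, the change-of-variables identity yields
$$
\mathbb P(\widetilde L^N\in O)=\mathbb E\!\left[e^{-a^2(N)\Phi_N}\,\mathbf 1_{\{\widetilde L^{N,\psi^{(n)}}\in O\}}\right].
$$
Fix $\varepsilon>0$ and restrict to the event $B_{N,\varepsilon}:=\{\Phi_N\leq I(\mu^{(n)})+\varepsilon\}$; bounding the integrand below gives
$$
\frac{1}{a^2(N)}\log\mathbb P(\widetilde L^N\in O)\;\geq\;-\big(I(\mu^{(n)})+\varepsilon\big)+\frac{1}{a^2(N)}\log\mathbb P\!\left(\{\widetilde L^{N,\psi^{(n)}}\in O\}\cap B_{N,\varepsilon}\right).
$$

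To conclude it suffices to show that the probability on the right tends to $1$. The first event has probability tending to $1$ by Theorem~\ref{Pertur-Hawkes-LLN-thm}. For $B_{N,\varepsilon}$, Theorem~\ref{Pertur-Hawkes-LLN-thm} also yields $\widetilde L^{N,\psi^{(n)}}\to\mu^{(n)}$ and $L^{N,\psi^{(n)}}\to\mathcal L$, while the perturbed analogues of Lemmas~\ref{MDP-lem-4} and \ref{MDP-lem-5} control the two nonlinear terms in $\Phi_N$ coming from \eqref{mdp-exp-martingale-eq-3}. Combining these facts,
$$
\Phi_N\;\xrightarrow{\,\mathbb P\,}\;\Upsilon_{\mu^{(n)}}(\psi^{(n)})-\tfrac12\int_0^T\<\mathcal L_t,(\nabla\psi^{(n)}(t))^2\>\phi\!\left(\int_0^t h(t-s)dm_s\right)dt,
$$
and since $\mu^{(n)}$ solves \eqref{Pertur-lim-equation-eq-1} with $g=\nabla\psi^{(n)}$, we have $\Upsilon_{\mu^{(n)}}(\psi^{(n)})=[\psi^{(n)},\psi^{(n)}]$, so the limit equals $\tfrac12[\psi^{(n)},\psi^{(n)}]=I(\mu^{(n)})$. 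Hence $\mathbb P(B_{N,\varepsilon})\to 1$, so the intersection has probability tending to $1$, the display above yields $\liminf_N a^{-2}(N)\log\mathbb P(\widetilde L^N\in O)\geq -I(\mu^{(n)})-\varepsilon$, and sending $\varepsilon\downarrow 0$ and then $n\to\infty$ with $I(\mu^{(n)})\to I(\mu)$ finishes the argument.

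The main obstacle is establishing the perturbed versions of Lemmas~\ref{MDP-lem-4} and \ref{MDP-lem-5} (and of the $L^1$-control of $\overline Z^{\psi^{(n)},N}-m$ used implicitly) when the intensity in \eqref{N-dim-Hawkes-process-eq} is multiplied by the factor $e^{(a(N)/\sqrt N)\nabla\psi^{(n)}(s,\cdot)}$ appearing in \eqref{Pertur-Hawkes-psi-eq}. Since $\psi^{(n)}$ is fixed and $a(N)/\sqrt N\to 0$, this factor is uniformly bounded above and below by constants arbitrarily close to $1$ for $N$ large, so the martingale compensator in \eqref{N-dim-matingale-variation} is perturbed only by a multiplicative factor $1+o(1)$; the exponential moment bounds of Lemma~\ref{Basic-lem-0} and Lemma~\ref{Basic-lem-1} therefore carry over to the perturbed system with constants of the same order, and the super-exponential estimates of Lemmas~\ref{MDP-lem-4}--\ref{MDP-lem-5} follow by repeating their proofs verbatim on the perturbed trajectories.
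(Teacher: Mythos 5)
Your proposal is correct and follows the paper's strategy: tilt by the exponential martingale associated with $\psi^{(n)}$, invoke Theorem~\ref{Pertur-Hawkes-LLN-thm} so that the tilted law concentrates on $\mu^{(n)}\in O$, control the two nonlinear correction terms in the log-likelihood by (perturbed versions of) Lemmas~\ref{MDP-lem-4}--\ref{MDP-lem-5}, and pass to the limit along $\psi^{(n)}\to\psi$ using Lemma~\ref{MDP-rate-f-property-lem}. The only cosmetic difference is that the paper derives the lower bound via a Jensen-type inequality applied to $\mathbb{E}^{\psi^{\epsilon},N}\left(I_{\{\widetilde{L}^{N}\in O\}\cap A_1^c\cap A_2^c}\log\frac{d\mathbb{P}^{0,N}}{d\mathbb{P}^{\psi^{\epsilon},N}}\right)$, whereas you bound the integrand directly on the good event $\{\Phi_N\le I(\mu^{(n)})+\varepsilon\}$; these are interchangeable, and your explicit observation that the super-exponential estimates carry over to the perturbed dynamics because $e^{(a(N)/\sqrt N)\nabla\psi^{(n)}}=1+o(1)$ uniformly fills a step the paper leaves implicit rather than constituting a gap.
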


\begin{proof}
We can assume ${I}(\mu)<\infty$.   Choose  $\epsilon_0>0$ small enough such that $B(\mu,\epsilon_0)\subset O$, where $B(\mu,\epsilon_0)$ is the open ball centered at $\mu$ of radius $\epsilon_0$.  Then by Lemma~\ref{MDP-rate-f-property-lem}, for any $\epsilon\in (0,\epsilon_0)$, we can choose $\psi^{\epsilon}\in C^{1,lip}([0,T]\times\mathbb{N})$  such that 
$$
\mu^{\epsilon}\in B(\mu,\epsilon),\quad  |{I}(\mu^{\epsilon})-{I}(\mu)|<\epsilon,   
$$
where  $\mu^{\epsilon}\in D([0,T], \mathbb H^{-1}(\mathbb N))$ is the solution of \eqref{Pertur-lim-equation-eq-1} associated with $\psi^{\epsilon}$.  

Let $\mathbb{P}^{0,N}$ be the probability distribution of $\left\{\left(Z^{N,1}_t,\cdots,Z^{N,N}_t\right),t\in [0,T]\right\}$, and we also let $\left\{\left(Z^{N,1}_t,  \cdots, Z^{N,N}_t\right),   t\in [0,T]\right\}$   also denote the coordinate process  in $ D([0,T], \mathbb N^N)$. 
Furthermore, let  $\left\{\left(Z^{\psi^{\epsilon}, N,1}_t, \cdots, Z^{\psi^{\epsilon}, N,N}_t\right), t\in[0,T]\right\}$ be the unique solution of the SDEs \eqref{Pertur-Hawkes-psi-eq}  associated with $\psi^{\epsilon}$ and let $ \mathbb{P}^{\psi^{\epsilon}, N}$ denote its  probability distribution. 
Then we can compute that
\begin{equation*} \label{N-dim-perturbation-Hawkes-process-NR-eq}
\begin{aligned}
\log\frac{d\mathbb P^{\psi^{\epsilon}, N}}{d\mathbb P^{0,N}}&=\sum_{i=1}^N\int_0^T \int_0^\infty \left( \frac{a(N)}{\sqrt{N}}\nabla\psi^{\epsilon}\left(s,Z^{\psi^{\epsilon},N,i}_{s-}\right)\right)I_{\left\{z \leq \phi \left( \int_0^{s-}h(s-u)d\overline{Z}^{\psi,N}_u\right)\right\}}
\pi^i(ds\,dz) \\
&\quad -\sum_{i=1}^N  \int_0^T   \bigg(e^{\frac{a(N)}{\sqrt{N}}\nabla\psi^{\epsilon}\left(s,Z^{\psi^{\epsilon},N,i}_{s}\right)}-1\bigg)  \phi \left( \int_0^{s}h(s-u)d\overline{Z}^{\psi,N}_u\right)ds.
\end{aligned}
\end{equation*}
Then from Theorem~\ref{Pertur-Hawkes-LLN-thm}, we have
\begin{align}\label{MDP-New-LLN-1}
\lim_{N\rightarrow\infty}\mathbb{P}^{\psi^{\epsilon},N}\left(\widetilde{L}^{N}\in O\right)
=\lim_{N\rightarrow\infty}\mathbb{P}\left(\widetilde{L}^{N,\psi^{\epsilon}}\in O\right)=1.
\end{align}
By It\^{o}'s formula,
\begin{equation*}
\begin{aligned}
&a^2(N)\bigg(\left\<\widetilde{L}^N_T,\psi^{\epsilon}(T)\right\>-\int_0^{T} \left\< \widetilde{L}^{N},\partial_t \psi^{\epsilon}(t)\right\>   dt- \int_0^{T} \left\<\widetilde{L}_t^N, \nabla \psi^{\epsilon}(t)\right\>  \phi\left(\int_0^{t} h(t-s)d m_s\right) dt\\
&-\int_0^{T} \left\<{L}_t^N, \nabla \psi^{\epsilon}(t)\right\>   \frac{\sqrt{N}}{a(N)}\left( \phi\left(\int_0^{t} h(t-s)d \overline{Z}^{N}_s\right)-\phi\left(\int_0^{t} h(t-s)d m_s\right)\right) dt\bigg)\\
&=\sum_{i=1}^N\int_0^{T} \int_0^\infty \frac{a(N)}{\sqrt{N}}\nabla  \psi^{\epsilon}\left(t,Z_{t-}^{N,i}\right)I_{ \left\{z \leq \phi\left(\int_0^{t-}
h(t-s)d \overline{Z}^{N}_s\right)\right\}}   ({\pi}^{i}(dt\,dz)-dtdz)\,.
\end{aligned}
\end{equation*}
Then we can compute that
\begin{equation}\label{MDP-lower-bound-eq}
\begin{aligned}
&\log\frac{d\mathbb P^{\psi^{\epsilon},N}}{d\mathbb P^{0,N}}\\
&=a^2(N)\bigg(\left\<\widetilde{L}^N_T,\psi^{\epsilon}(T)\right\>-\int_0^{T} \left\< \widetilde{L}^{N},\partial_t \psi^{\epsilon}(t)\right\>   dt- \int_0^{T} \left\<\widetilde{L}_t^N, \nabla \psi^{\epsilon}(t)\right\>  \phi\left(\int_0^{t} h(t-s)d m_s\right) dt\\
&-\int_0^{T} \left\<{L}_t^N, \nabla \psi^{\epsilon}(t)\right\>   \frac{\sqrt{N}}{a(N)}\left( \phi\left(\int_0^{t} h(t-s)d \overline{Z}^{N}_s\right)-\phi\left(\int_0^{t} h(t-s)d m_s\right)\right) dt\bigg)\\
&-\sum_{i=1}^N  \int_0^T   \bigg(e^{\frac{a(N)}{\sqrt{N}}\nabla\psi^{\epsilon}\left(s,Z^{N,i}_{s}\right)}-1-\frac{a(N)}{\sqrt{N}}\nabla\psi^{\epsilon}\left(s,Z^{N,i}_{s}\right)\bigg)  \phi \left( \int_0^{s}h(s-u)d\overline{Z}_u^{N}\right)ds.
\end{aligned}
\end{equation}
Set
\begin{equation} 
\begin{aligned}
A_1:=& \bigg\{\bigg|\int_0^T\left\<  L_t^N, \nabla \psi^{\epsilon}(t)\right\>  \frac{\sqrt{N}}{a(N)}\bigg( \phi\left(\int_0^{t} h(t-s)d \overline{Z}^{N}_s\right) -\phi\left(\int_0^{t} h(t-s)d m_s\right)\bigg)dt\\
&\qquad\qquad-\int_0^T\left\< \mathcal L_t, \nabla \psi^{\epsilon}(t)\right\>  \phi'\bigg(\int_0^{t} h(t-s)d m_s\bigg)  \int_0^{t} h(t-s) d\left\<\widetilde{L}_s^N,\ell\right\>dt\bigg|\geq \epsilon \bigg\},
\end{aligned}
\end{equation}
and
\begin{equation} 
\begin{aligned}
A_2:= &\bigg\{\bigg|\frac{N}{a^2(N)} \int_0^T\bigg\<  {L}_t^N,e^{\frac{a(N)}{\sqrt{N}}\nabla \psi^{\epsilon}(t)}-1-\frac{a(N)}{\sqrt{N}}\nabla \psi^{\epsilon}(t)\bigg\>  \phi\left(\int_0^{t-}
h(t-s)d \overline{Z}^{N}_s\right)  \\
&\qquad\qquad   - \frac{1}{2}\left\<  \mathcal L_t, (\nabla \psi^{\epsilon}(t))^2\right\>  \phi\left(\int_0^{t}
h(t-s)d m_s\right) dt \bigg|\geq \epsilon \bigg\}.
\end{aligned}
\end{equation}
Then
$$
\begin{aligned}
&~~\frac{1}{a^2(N)}\log \mathbb P\left(\widetilde{L}^{N}\in O\right)\\
&=\frac{1}{a^2(N)}\log \frac{1}{\mathbb P^{\psi^{\epsilon},N}\left(\widetilde{L}^{N}\in O\right)}
\mathbb E^{\psi^{\epsilon},N}\left(I_{\{\widetilde{L}^{N}\in O\}} \frac{d\mathbb P^{0, N}}{d\mathbb P^{\psi^{\epsilon},N}}\right)
+\frac{1}{a^2(N)}\log \mathbb P^{\psi^{\epsilon},N}\left(\widetilde{L}^{N}\in O\right)\\
&{\geq} \frac{1}{a^2(N)}
\mathbb E^{\psi^{\epsilon},N}\left(I_{\{\widetilde{L}^{N}\in O\}} \log\frac{d\mathbb P^{0, N}}{d\mathbb P^{\psi^{\epsilon},N}}\right)
+\frac{1}{a^2(N)}\log \mathbb P^{\psi^{\epsilon},N}\left(\widetilde{L}^{N}\in O\right)\\
&\geq \frac{1}{a^2(N)}
\mathbb E^{\psi^{\epsilon},N}\left(I_{\{\widetilde{L}^{N}\in O\}\cap A_1^c\cap A_2^c} \log\frac{d\mathbb P^{0, N}}{d\mathbb P^{\psi^{\epsilon},N}}\right)
+\frac{1}{a^2(N)}\log \mathbb P^{\psi^{\epsilon},N}\left(\widetilde{L}^{N}\in O\right),
\end{aligned}
$$
where $\mathbb E^{\psi^{\epsilon},N}$ is the associated expectation of $\mathbb P^{\psi^{\epsilon},N}$. By \eqref{MDP-New-LLN-1} and \eqref{MDP-lower-bound-eq}, we have 
 $$
\begin{aligned}
&\liminf_{N\to\infty}\frac{1}{a^2(N)}\log \mathbb P\left(\widetilde{L}^{N}\in O\right)\\
\geq&- \left({\Upsilon}_{\mu^{\epsilon}}(\psi^{\epsilon})-\frac{1}{2}\int_0^T\left\< \mathcal L_t, (\nabla \psi^{\epsilon}(t))^2 \right\>  \phi\left(\int_0^{t}
h(t-s)d m_s\right)dt\right)-2\epsilon\\
\geq &-I(\mu)-3\epsilon,
\end{aligned}
$$
where the last equality hold by  \eqref{MDP-rate-f-property-lem-eq-2} and Lemma~\ref{MDP-rate-f-property-lem}.  Letting $\epsilon\to0$, we obtain 
 $$
\begin{aligned}
\liminf_{N\to\infty}\frac{1}{a^2(N)}\log \mathbb P\left(\widetilde{L}^{N}\in O\right)\geq-I(\mu).
\end{aligned}
$$
This completes the proof of the lower bound.
\end{proof}

\section{Proofs of Two Corollaries}\label{sec:cor}

In this section, we show that our results (Theorem~\ref{CLT-thm} and Theorem~\ref{MDP-thm}) can recover 
the fluctuations (Corollary~\ref{CLT-thm-mean-p}) and moderate deviations (Corollary~\ref{MDP-thm-mean-p}) for the mean process obtained in \cite{GaoFZhu2018SPA}.

First, let us prove Corollary~\ref{CLT-thm-mean-p}, which establishes the fluctuations for the mean process.

\begin{proof}[Proof of Corollary~\ref{CLT-thm-mean-p}]
First of all, note that the mapping $D([0,T], \mathbb H^{-1}(\mathbb N)) \ni \mu\to  \<\mu,\ell\>\in D([0,T], \mathbb R) $ is continuous. By the continuous mapping theorem in the weak convergence, we recover the fluctuation theorem   for mean process (which was first obtained in Gao and Zhu \cite{GaoFZhu2018SPA} and  Heesen and Stannat \cite{HeesenStannat2021SPA}).  
In fact,   we take $\varphi=\ell$  in \eqref{CLT-thm-eq} for any $0\leq t\leq T$ instead of at $T$, and then using $\<X_s,\nabla\ell\>=0$, $\<\mathcal L_s,\nabla\ell\>=\<\mathcal L_s,(\nabla\ell)^2\>=1$,  and noting  that  $W_s:=\<\sqrt{\mathcal L_s}, B_s\>=\sum_{x\in\mathbb N}\sqrt{\mathcal L_s(x)}B_s(x) $ is a standard Brownian motion,  we have established
Corollary~\ref{CLT-thm-mean-p}.
\end{proof}

Next, let us prove Corollary~\ref{MDP-thm-mean-p}, which establishes the moderate deviations for the mean process.

\begin{proof}[Proof of Corollary~\ref{MDP-thm-mean-p}]
First of all, note that the mapping $D([0,T], \mathbb H^{-1}(\mathbb N)) \ni \mu\to  \<\mu,\ell\>\in D([0,T], \mathbb R) $ is continuous. By contraction principle (see e.g. \cite{DemboZeitouni1998Book}),  we know that $\mathbb{P}\left(\left\{\frac{\sqrt{N}(\overline{Z}_{t}^{N}-m_t)}{a(N)},0\leq t\leq T\right\}\in\cdot\right)$ satisfies a large deviation principle with the speed $a^{2}(N)$ and the rate function
\begin{equation}\label{plug:into-rate-J-eq-1}
J(\eta)=\inf_{\mu:\<\mu_{t},\ell\>=\eta_{t},0\leq t\leq T}I(\mu).
\end{equation}
Next, let us give a precise formula for $J(\eta)$.   From the proof of Theorem~\ref{MDP-thm} (see subsection \ref{sec:perturbed}), for any $\mu$ with  $I(\mu)<\infty$,
there exists some measurable functions $\psi $  on  $[0,T]\times \mathbb N$ such that   
$$
{I}(\mu)=\frac{1}{2}\int_{0}^{T}\left\langle\mathcal{L}_{t},(\nabla\psi(t))^{2}\right\rangle\phi\left(\int_{0}^{t}h(t-s)dm_{s}\right)dt<\infty,
$$
and  for any $ \varphi\in C^{1,lip}([0,T]\times\mathbb{N})$,   $t\in (0,T]$
\begin{equation}\label{plug:into-eq-1}
\begin{aligned}
&\< \mu_t,\varphi(t)\>- \int_0^{t}\< \mu_{s},\partial_s \varphi(s)\>   ds-\int_0^{t}\< \mu_s, \nabla \varphi(s)\>   \phi\left(\int_0^{s} h(s-u)d m_u\right)  ds\\
&\qquad-\int_0^{t}\< \mathcal L_s, \nabla \varphi(s)\> \phi'\left(\int_0^{s}h(s-u)d m_u\right)  \int_0^{s}h(s-u)d \<\mu_u,\ell\>ds\\
&\qquad\qquad-\int_0^t\<\nabla\psi(s) \mathcal L_s, \nabla \varphi(s) \>  \phi\left(\int_0^{s}
h(s-u)d m_u\right) ds=0.
\end{aligned}
\end{equation}
If  $J(\eta)<\infty$, then  for  any $\epsilon>0$,  there exists  $\mu$ with  $I(\mu)<\infty$ such that  $\<\mu_{s},\ell\>=\eta_{s}$ for any $0\leq s\leq T$ and 
$$
J(\eta)\geq I(\mu)-\epsilon.
$$
Now,  by taking  $\varphi=\ell$ in \eqref{plug:into-eq-1} and applying $\<\mu_{s},\ell\>=\eta_{s}$ for any $0\leq s\leq T$
and $\langle\mathcal{L}_{s},\nabla\ell\rangle=1$, 
we get
\begin{equation*}
\eta_{t}-\int_{0}^{t}\phi'\left(\int_0^{s}
h(s-u)d m_u\right)  \int_0^{s}
h(s-u)d \eta_{u}ds
=\int_{0}^{t}\langle\mathcal{L}_{s},\nabla\psi(s)\rangle\phi\left(\int_{0}^{s}h(s-u)dm_{u}\right)ds,
\end{equation*}
which yields
\begin{equation}\label{psi:equation-eq-1}
\langle\mathcal{L}_{t},\nabla\psi(t)\rangle=\frac{\partial_t\eta_{t}-\phi'\left(\int_0^{t}
h(t-s)d m_s\right)  \int_0^{t}h(t-s)d \eta_{s}}{\phi\left(\int_{0}^{t}h(t-s)dm_{s}\right)}.
\end{equation}
Therefore, by the Cauchy-Schwarz inequality, 
$$
\begin{aligned}
J(\eta)+\epsilon\geq I(\mu)=&\frac{1}{2}\int_{0}^{T}\langle\mathcal{L}_{t},(\nabla\psi(t))^{2}\rangle\phi\left(\int_{0}^{t}h(t-s)dm_{s}\right)dt\\
\geq &\frac{1}{2}\int_{0}^{T}\frac{\left(\partial_t\eta_{t}-\phi'\left(\int_0^{t}
h(t-s)d m_s\right)  \int_0^{t}h(t-s)d \eta_{s}\right)^2}{\phi\left(\int_{0}^{t}h(t-s)dm_{s}\right)}dt,
\end{aligned}
$$
and thus
\begin{equation}\label{plug:into-rate-J-eq-2}
\begin{aligned}
J(\eta) \geq & \frac{1}{2}\int_{0}^{T}\frac{\left(\partial_t\eta_{t}-\phi'\left(\int_0^{t}
h(t-s)d m_s\right)  \int_0^{t}h(t-s)d \eta_{s}\right)^2}{\phi\left(\int_{0}^{t}h(t-s)dm_{s}\right)}dt.
\end{aligned}
\end{equation}
On the other hand,  we can take  
$$
\psi^*(t,x)=\frac{\left(\partial_t\eta_{t}-\phi'\left(\int_0^{t}
h(t-s)d m_s\right)  \int_0^{t}h(t-s)d \eta_{s}\right)\ell(x)}{\phi\left(\int_{0}^{t}h(t-s)dm_{s}\right)},
$$
and let $\mu^*$ be the unique solution of the  equation \eqref{plug:into-eq-1} with $\psi=\psi^*$.  Set $\eta_t^*=\<\mu_t^*, \ell\>$.  Then, we have
\begin{align*}
\frac{\partial_t\eta_{t}-\phi'\left(\int_0^{t}
h(t-s)d m_s\right)  \int_0^{t}h(t-s)d \eta_{s}}{\phi\left(\int_{0}^{t}h(t-s)dm_{s}\right)}
&=\langle\mathcal{L}_{t},\nabla\psi(t)\rangle=\langle\mathcal{L}_{t},\nabla\psi^*(t)\rangle\\
&=\frac{\partial_t\eta_{t}^*-\phi'\left(\int_0^{t}
h(t-s)d m_s\right)  \int_0^{t}h(t-s)d \eta_{s}^*}{\phi\left(\int_{0}^{t}h(t-s)dm_{s}\right)}.
\end{align*}
Therefore,
$\eta^*=\eta$, and
\begin{equation}\label{plug:into-rate-J-eq-3}
\begin{aligned}
J(\eta)=J(\eta^*)\leq I(\mu^*)=&\frac{1}{2}\int_{0}^{T}\langle\mathcal{L}_{t},(\nabla\psi^*(t))^{2}\rangle\phi\left(\int_{0}^{t}h(t-s)dm_{s}\right)dt\\
=& \frac{1}{2}\int_{0}^{T}\frac{\left(\partial_t\eta_{t}-\phi'\left(\int_0^{t}
h(t-s)d m_s\right)  \int_0^{t}h(t-s)d \eta_{s}\right)^2}{\phi\left(\int_{0}^{t}h(t-s)dm_{s}\right)}dt.
\end{aligned}
\end{equation}
This completes the proof.
\end{proof}


\end{document}